\newif\iffinal
\definecolor{BrickRed}{rgb}{0.65,0.08,0}
\numberwithin{equation}{section}
\numberwithin{figure}{section}
\numberwithin{table}{section}
\newtheorem{Lemma}{Lemma}[section]
\newtheorem{Theorem}{Theorem}[section]
\newtheorem{Condition}{Condition}[section]
\newtheorem{Remark}{Remark}[section]
\newcommand{\Erdos}{Erd\H{o}s-R\'enyi }
\newcommand{\half}{{\frac{1}{2}}}
\newcommand{\lan}{\langle}
\newcommand{\ran}{\rangle}
\newcommand{\lfl}{\lfloor}
\newcommand{\rfl}{\rfloor}
\newcommand{\Bmb}{{\mathbb{B}}}
\newcommand{\Cmb}{{\mathbb{C}}}
\newcommand{\Dmb}{{\mathbb{D}}}
\newcommand{\Emb}{{\mathbb{E}}}
\newcommand{\Hmb}{{\mathbb{H}}}
\newcommand{\Nmb}{{\mathbb{N}}}
\newcommand{\Pmb}{{\mathbb{P}}}
\newcommand{\Qmb}{{\mathbb{Q}}}
\newcommand{\Rmb}{{\mathbb{R}}}
\newcommand{\Smb}{{\mathbb{S}}}
\newcommand{\Xmb}{{\mathbb{X}}}
\newcommand{\Zmb}{{\mathbb{Z}}}
\newcommand{\Amc}{{\mathcal{A}}}
\newcommand{\Bmc}{{\mathcal{B}}}
\newcommand{\Fmc}{{\mathcal{F}}}
\newcommand{\Gmc}{{\mathcal{G}}}
\newcommand{\Hmc}{{\mathcal{H}}}
\newcommand{\Lmc}{{\mathcal{L}}}
\newcommand{\Mmc}{{\mathcal{M}}}
\newcommand{\Pmc}{{\mathcal{P}}}
\newcommand{\Smc}{{\mathcal{S}}}
\newcommand{\Tmc}{{\mathcal{T}}}
\newcommand{\Umc}{{\mathcal{U}}}
\newcommand{\Ebf}{{\mathbf{E}}}
\newcommand{\one}{{\boldsymbol{1}}}
\newcommand{\etabar}{{\bar{\eta}}}
\newcommand{\Fmcbar}{{\bar{\Fmc}}}
\newcommand{\gammabar}{{\bar{\gamma}}}
\newcommand{\mubar}{{\bar{\mu}}}
\newcommand{\nubar}{{\bar{\nu}}}
\newcommand{\omegabar}{{\bar{\omega}}}
\newcommand{\thetabar}{{\bar{\theta}}}
\newcommand{\Fmctil}{{\widetilde{\Fmc}}}
\newcommand{\gtil}{{\widetilde{g}}}
\newcommand{\Gammatil}{{\widetilde{\Gamma}}}
\newcommand{\itil}{{\widetilde{i}}}
\newcommand{\jtil}{{\widetilde{j}}}\newcommand{\Jtil}{{\widetilde{J}}}
\newcommand{\ktil}{{\widetilde{k}}}
\newcommand{\Ntil}{{\widetilde{N}}}
\newcommand{\nutil}{{\widetilde{\nu}}}
\newcommand{\Omegatil}{{\widetilde{\Omega}}}
\newcommand{\Pmbtil}{{\widetilde{\Pmb}}}
\newcommand{\xtil}{{\widetilde{x}}}\newcommand{\Xtil}{{\widetilde{X}}}
\newcommand{\xitil}{{\widetilde{\xi}}}
\newcommand{\Ytil}{{\widetilde{Y}}}
\newcommand{\Ncompensated}{\Ntil}
\newcommand{\measurea}{\rho}
\newcommand{\dBL}{\textnormal{d}_{\textnormal{BL}}}
\numberwithin{equation}{section}
\begin{document}
	
	\title[Mean field interaction on dynamic random graphs]{Mean field interaction on random graphs with dynamically changing multi-color edges}

	\date{\today}
	\subjclass[2010]{
	60F05 
	60K35 
	60J75 
	05C80 
	60G09 
	60J27 
	60K37}
	\keywords{Dynamical random graphs; Mean field interaction; Propagation of chaos; Central limit theorems; Endogenous common noise; Exchangeability; Interacting particle systems}
	\author[Bayraktar]{Erhan Bayraktar}
		\thanks{E.\ Bayraktar is supported in part by the National Science Foundation under DMS-2106556, and in part by the Susan M. Smith Professorship.} 
		\address{Department of Mathematics, University of Michigan, 530 Church Street, Ann Arbor, MI 48109} 
	\author[Wu]{Ruoyu Wu}
		\address{Department of Mathematics, Iowa State University, 411 Morrill Road, Ames, IA 50011} 
		\email{erhan@umich.edu, ruoyu@iastate.edu}
	 
	 \begin{abstract}
	 We consider weakly interacting jump processes on time-varying random graphs with dynamically changing multi-color edges. 
	 The system consists of a large number of nodes in which the node dynamics depends on the joint empirical distribution of all the other nodes and the edges connected to it, while the edge dynamics depends only on the corresponding nodes it connects. 
	 Asymptotic results, including law of large numbers, propagation of chaos, and central limit theorems, are established. 
	 In contrast to the classic McKean-Vlasov limit, the limiting system exhibits a path-dependent feature in that the evolution of a given particle depends on its own conditional distribution given its past trajectory.
	 We also analyze the asymptotic behavior of the system when the edge dynamics is accelerated.
	 A law of large number and a propagation of chaos result is established, and the limiting system is given as independent McKean-Vlasov processes.
	 Error between the two limiting systems, with and without acceleration in edge dynamics, is also analyzed.
	 \end{abstract}
	 
	 \maketitle

\tableofcontents

\section{Introduction}

In this work we study some asymptotic results for large particle systems with mean field interactions on time varying random graphs.
The model is described in terms of two collections of countable-state pure jump processes, one that gives the evolution of (the states of) nodes in the system, and the other that drives the evolution of (the colors of) edges which govern the interaction between nodes in the system.
We consider mean field interaction between nodes, in that the node dynamics depends on the joint empirical distribution of all the other nodes and the edges connected to it.
The edge dynamics on the other hand depends only on the nodes it connects.
More precisely,
\begin{align}
	X_i^n(t) & = X_i(0) + \int_{[0,t]\times\Zmb\times\Rmb_+} y\one_{[0,\Gamma(y,X_i^n(s-),\nu^n_i(s-))]}(z) \, N_i(ds\,dy\,dz), \notag \\
	\xi_{ij}^n(t) & = \xi_{ij}(0) + \int_{[0,t]\times\Zmb\times\Rmb_+} y\one_{[0,\beta(n)\Gammatil(y,\xi_{ij}^n(s-),X_i^n(s-),X_j^n(s-))]}(z) \, N_{ij}(ds\,dy\,dz), \label{eq:system} \\
	\nu^n_i(t) & = \frac{1}{n} \sum_{j=1}^n \delta_{(X_j^n(t),\xi_{ij}^n(t))}, \quad i,j=1,\dotsc,n, \notag 
\end{align}
where $\{X_i(0)\}$ are independent and identically distributed (i.i.d.) $\Zmb$-valued random variables with some probability distribution $\mu(0)$, $\{\xi_{ij}(0)\}$ are i.i.d.\ $\Zmb$-valued random variables with some probability distribution $\theta(0)$, $\{N_i\}$ and $\{N_{ij}\}$ are i.i.d.\ Poisson random measures (whose precise definition will be introduced in Section \ref{sec:notation}) with intensity $ds \times \measurea(dy) \times dz$ for some finite measure $\measurea$ on $\Zmb$, $\Gamma$ and $\Gammatil$ are functions governing the jump rates with $\Gamma(y,x,\nu) = \int_{\Zmb^2} \gamma(y,x,\xtil,\xitil) \,\nu(d\xtil\,d\xitil)$ for some measurable function $\gamma$.
Here $X_i^n$ denotes the state of node $i$, $\xi_{ij}^n$ describes the color of the edge between nodes $i$ and $j$, and $\beta(n) \ge 0$ is a sequence of real numbers representing the scale of jump rates of edges. 

A typical example where such a system arises is in the study of gossip algorithms {(see e.g. \cite[Section 1]{shah2009gossip} for a basic setup and \cite{LavaeiMurray2009quantized} for a setup with weighted graph)}, where $\xi_{ij}^n$ denotes whether there is an edge between nodes $i,j$ in a graph with $n$ nodes, and $X_i^n$ denotes whether certain information has spread to node $i$.
{Systems with state-dependent edge evolution may also arise in the modeling of brain networks \cite{BetzelBassett2017multi,KhambhatiSizemoreBetzelBassett2018modeling,BaladronFaugeras2012,Touboul2014propagation} and other biological phenomena \cite{BarreDegondPeurichardZatorska2020modelling,BarreDegondZatorska2017kinetic}. For example, in neuroscience, $X_i^n$ represents the state of each neuron and $\xi_{ij}^n$ describes the dynamical connection between neurons (see e.g.\ \cite{BaladronFaugeras2012,Touboul2014propagation} for a diffusion setup with static graphs).}
{Such a system may also be used for analysis in epidemiology \cite{GrossDLimaBlasius2006epidemic,MarceauNoelHebertAllardDube2010adaptive}.
For example, in the analysis of individual-based SIR model (see e.g.\ \cite{RochaMasuda2016individual} for a basic setup), $X_i^n$ represents the status of an individual (location, wearing a mask or not, vaccinated or not, susceptible, infectious, or recovered, etc.), while $\xi_{ij}^n$ denotes the type of interactions between individuals $i$ and $j$ (such as shaking hands, social distanced, and no interactions). In such a setup, the dynamics of $\xi_{ij}^n$ depends on the status of individuals $i$ and $j$ and its evolution may be much faster than that of $X_i^n$.}
In simpler terms, one may also view the system as $n$ children playing at $M$ places with $K$ types friendship between each pair of children, where $M$ and $K$ could be infinity.
The node $X_i^n(t) \in \{1,\dotsc,M\}$ denotes the place at which the $i$-th child is, while the edge $\xi_{ij}^n(t) \in \{1,\dotsc,K\}$ denotes the type of friendship in which the $i$-th child views the $j$-th child at time $t$.
The jump rate of $X_i^n$ depends on the empirical distribution of all children's positions and their friendship from the viewpoint of the $i$-th child, that is, $\nu^n_i$.

When there is only one possible color, i.e.\ the graph is simply a complete graph with $\xi_{ij}^n \equiv 1$, the model reduces to the classic mean-field system, the study of which dates back to works of Boltzmann, Vlasov, McKean and others (see \cite{Sznitman1991,Kolokoltsov2010} and references therein).
The original motivation for the study of mean-field systems came from statistical physics but in recent years similar models have arisen in many different application areas, ranging from economics and chemical and biological systems to communication networks and social sciences (see e.g.\ \cite{BudhirajaDupuisFischerRamanan2015limits} for an extensive list of references).
The asymptotic picture is well resolved and many different results have been established, including laws of large numbers (LLN), propagation of chaos (POC) properties, and central limit theorems (CLT), see e.g.\ \cite{BhamidiBudhirajaWu2019weakly} and the references therein.

When there are two possible colors (denoted by $0$ and $1$ for example) and edges are independently drawn and fixed at time $0$, i.e.\ the edges could be present or absent and form an \Erdos random graph, the model has recently drawn much attention.
It has been shown that the same LLN, CLT, and large deviation principles (LDP), as in the mean-field case, hold under certain conditions.
In particular for interacting diffusions, quenched and annealed LLN are studied in \cite{Delattre2016}, CLT is established in \cite{BhamidiBudhirajaWu2019weakly}, and LDP is obtained in \cite{CoppiniDietertGiacomin2019law,OliveiraReis2019interacting}.
For certain pure jump processes arising from the study of large-scale queuing networks, LLN is studied in \cite{BudhirajaMukherjeeWu2019supermarket}.
Moreover, mean field games on \Erdos random graphs are analyzed in \cite{Delarue2017mean}, and graphon mean field games on static graphs with possibly uncountable players have recently been studied (see e.g.\ \cite{CainesHuang2018graphon,PariseOzdaglar2019graphon,CarmonaCooneyGravesLauriere2019stochastic}).
{We note that most of these works are focusing on the dense graph regime, and the graph in \eqref{eq:system} is dense when focusing on the network with a particular edge color that has a non-zero probability.
After this work was posted, a diffusive system with accelerated state-dependent edge dynamics (two colors) in the sparse graph regime is studied in \cite{BarreDobsonOttobreZatorska2021fast} and uniform in time averaging results are obtained (see also references therein for other areas where the system \eqref{eq:system} may arise).}

The goal of the current work is to study asymptotic behaviors of the system \eqref{eq:system} as $n \to \infty$ and $\beta(n) \to \beta \in [0,\infty]$.
Our first main result is LLN, POC, and CLT (Theorems \ref{thm:no_acceleration_2}, \ref{thm:CLT_nu} and \ref{thm:CLT_mu}) for node states and their empirical measures when $\beta<\infty$.
The proof of LLN and POC relies on certain coupling arguments, exchangeability properties of the nodes and edges, and a key conditional independence structure for the limiting system (Theorem \ref{thm:no_acceleration_1}).
Intuitively speaking, due to the state-dependent evolution of edges, one would not expect in the limiting system that edges are independent, although nodes are i.i.d.
Indeed, Theorem \ref{thm:no_acceleration_1} states that conditioning on \textit{the path} of the state of a node, edges connected with this node are i.i.d.
The proof of such a statement follows from a careful time-discretization argument along with applying de Finetti's theorem.
Moreover, we generalize the special case $\beta(n) \to 0$, in which the limit represents a random but a static graph, to a case in which the edge processes $\xi_{ij}^n$ are i.i.d.\ adapted and could be non-Markovian.
LLN and POC for such systems are obtained in Theorem \ref{thm:iid_LLN}.
The CLT result characterizes the fluctuation of functionals of the empirical measures of nodes and edges.
The proof of CLT relies on a change of measure technique using Girsanov's theorem, and this approach goes back to \cite{Sznitman1984,ShigaTanaka1985}.
This technique reduces the problem to working with the limiting system where nodes are i.i.d.\ and edges are conditionally independent, while the price to pay is that one must carefully analyze the asymptotic behavior of the Radon–Nikodym derivative.
The presence of conditionally independent edges {with state-dependent dynamics} requires more challenging work than what has been done in the single-color case (e.g.\ in \cite{Sznitman1984,ShigaTanaka1985}) and the two-color case (in \cite{BhamidiBudhirajaWu2019weakly}).
In particular, {unlike in \cite{BhamidiBudhirajaWu2019weakly} where edges are evolving independently (as in Section \ref{sec:iid}) and the limit of fluctuations of node states is Gaussian, here} the node plays the role of common noises in the analysis (see Lemma \ref{lem:Un_joint}) and as a result the CLT limit is not a Gaussian random variable but rather a Gaussian mixture.

Our second main result is the study of the averaging principle of the system \eqref{eq:system} when $\beta(n) \to \infty$.
Systems of stochastic processes with fast components or regime-switching features have a long history of applications and the averaging principle has been well studied, when there is \textit{one} fast component (see e.g.\ \cite{YinZhu2010hybrid,BudhirajaDupuisGanguly2018large,NguyenYinHoang2019laws}).
However, a collection of fast state-dependent switching edges are present in the system considered here, and more careful analysis is needed.
In particular in the limiting system, the jump rate of the slow component $X_i$ corresponding to node $i$ depends on its own probability distribution and the conditional invariant measure of the fast component given slow components (see \eqref{eq:acceleration_system} for the precise form).
LLN and POC for \eqref{eq:system} when $\beta(n)\to\infty$ are obtained in Theorem \ref{thm:acc_LLN}.
Compared to the limiting system in the regime $\beta(n) \to \beta < \infty$, this one does not suffer from the path-dependent conditional independence subtlety and serves as a nice approximation of the former, with the approximation error analyzed in Theorem \ref{thm:comparison}.

\subsection{Organization}
The paper is organized as follows.
In Section \ref{sec:no-acceleration} we analyze the system \eqref{eq:system} when $\beta(n) \to \beta \in [0,\infty)$.
A basic condition (Condition \ref{cond:no_acceleration}) is stated, under which the limiting system \eqref{eq:no_acceleration_system} has a unique solution and a certain (conditional) independence property (Theorem \ref{thm:no_acceleration_1}).
A law of large numbers and a propagation of chaos property are obtained in Theorem \ref{thm:no_acceleration_2}. 
In Section \ref{sec:iid}, we also present a LLN and POC (Theorem \ref{thm:iid_LLN}) for a system with i.i.d.\ and possibly non-Markovian edge processes $\xi_{ij}^n$.
In Section \ref{sec:acceleration}, the system \eqref{eq:system} with accelerated edge dynamics, namely when $\beta(n) \to \infty$, is studied.
LLN and POC are obtained in Theorem \ref{thm:acc_LLN}, and the approximation error, as $\beta \to \infty$, between the corresponding limiting system and \eqref{eq:no_acceleration_system} are characterized in Theorem \ref{thm:comparison}.
The convenience of this limiting system is illustrated in Section \ref{sec:eg-acceleration}, by characterizing the evolution of marginal distributions as Riccati equations.
In Section \ref{sec:CLT} we present a CLT (Theorem \ref{thm:CLT_nu}) for the fluctuation of functionals of the empirical measures of nodes and edges connecting to a given node.
As noted above, the limit is not a Gaussian random variable but rather a Gaussian mixture.
We also provide a CLT (Theorem \ref{thm:CLT_mu}) for the fluctuation of functionals of the empirical measures of nodes.
The limit is given by a simpler form and this point is illustrated through an example in Section \ref{sec:eg-CLT} where the variance of the limit Gaussian random variable has an explicit form.
Proofs of all LLN and POC are given in Section \ref{sec:pf-LLN}.
Finally Section \ref{sec:pf-CLT} contains proofs of Theorems \ref{thm:CLT_nu} and \ref{thm:CLT_mu}.

\subsection{Notation}
\label{sec:notation}
Given a Polish space $\Smb$, denote by $\Bmc(\Smb)$ the Borel $\sigma$-field. 
Let $\Pmc(\Smb)$ be the space of probability measures on $\Smb$ endowed with the topology of weak convergence. 
A convenient metric for this topology is the bounded-Lipschitz metric $\dBL$, defined by
$$\dBL(\nu_1,\nu_2) = \sup_{f \in \Bmb_1} \left| \lan f, \nu_1-\nu_2 \ran \right|, \quad \nu_1, \nu_2 \in \Pmc(\Smb),$$
where $\Bmb_1$ is the collection of all Lipschitz functions $f$ that are bounded by $1$ and such that the corresponding Lipschitz constant is bounded by $1$ as well; and $\lan f, \nu \ran := \int f \, d\nu$ for a signed measure $\nu$ on $\Smb$ and $\nu$-integrable $f \colon \Smb \to \Rmb$.
Given a collection of random probability measures $\nu^n$, $\nu$ on $\Smb$, we write $\nu^n \to \nu$ in $\Pmc(\Smb)$ in probability, if $d(\nu^n,\nu) \to 0$ in probability as $n \to \infty$, for any metric $d$ on $\Pmc(\Smb)$ that metrizes the weak convergence topology.
We say a collection $\{X_n\}$ of $\Smb$-valued random variables is tight if {the collection of their distributions} are tight in $\Pmc(\Smb)$.
We use the symbol `$\Rightarrow$' to denote convergence in distribution and `$\stackrel{d}{=}$' to denote the equality in distribution.
The probability law of a random variable $X$ will be denoted by $\Lmc(X)$.
For a measurable function $f \colon \Smb \to \Rmb$, let $\|f\|_\infty:= \sup_{x \in \Smb} |f(x)|$. 
Fix $T \in (0,\infty)$.
Denote by $\Cmb([0,T]:\Smb)$ (resp.\ $\Dmb([0,T]:\Smb)$) the space of continuous functions (resp.\ right continuous functions with left limits) from $[0, T]$ to $\Smb$, endowed with the uniform topology (resp.\ Skorokhod topology).
We will use the notations $X(t)$ and $X_t$ interchangeably for stochastic processes.
For $x \in \Dmb([0,T]:\Smb)$, let $\|x\|_{*,t} := \sup_{0 \le s \le t} \|x(s)\|$, $x[t] := (x(s):0 \le s \le t)$, and $x[t-] := (x(s):0 \le s < t)$.
{For a Banach space $\Hmc$, denote the norm (and inner product if $\Hmc$ is a Hilbert space) in $\Hmc$ by $\|\cdot\|_\Hmc$ (and $\lan \cdot, \cdot \ran_\Hmc$).}
For a $\sigma$-finite measure $\nu$ on a Polish space $\Smb$, denote by {$L^2(\Smb,\nu)$ (resp.\ $L^1(\Smb,\nu)$) the space of $\nu$-square integrable (resp.\ $\nu$-integrable)} functions from $\Smb$ to $\Rmb$.
We denote by $\Smb^\infty$ the countable product space of copies of $\Smb$, equipped with the usual product topology.

Let $[k] := \{1,\dotsc,k\}$ for each $k \in \Nmb$.
We will use $\kappa$, $\kappa_0$, $\kappa_1$, $\dotsc$ for constants in the proofs, whose value may change over lines.
Let $\Xmb_t = [0,t]\times\Zmb\times\Rmb_+$ for each $t \in [0,T]$, and let $\Mmc_t$ be the space of $\sigma$-finite measures on $(\Xmb_t,\Bmc(\Xmb_t))$ with the topology of vague convergence.
A Poisson random measure (PRM) $N$ on $\Xmb_T$ with intensity measure $\nu \in \Mmc_T$ is an $\Mmc_T$-valued random variable such that for each $A \in \Bmc(\Xmb_T)$ with $\nu(A) < \infty$, $N(A)$ is Poisson distributed with mean $\nu(A)$ and for disjoint $A_1,\dotsc,A_k \in \Bmc(\Xmb_T)$, $N(A_1),\dotsc,N(A_k)$ are mutually independent random variables (cf.\ \cite{IkedaWatanabe1990SDE}).

Let $(\Omega,\Fmc,\Pmb,\{\Fmc_t\})$ be a filtered probability space on which we are given i.i.d.\ PRM $\{N_i,N_{ij} : i,j \in \Nmb\}$ on $\Xmb_T$ with intensity measure $ds \times \measurea(dy) \times dz$ for some finite measure $\measurea$ on $\Zmb$.
Expectations under $\Pmb$ (resp.\ some other probability measure $\Qmb$) will be denoted by $\Emb$ (resp.\ $\Emb_\Qmb$).

\section{Systems with node-dependent edge dynamics}
\label{sec:no-acceleration}

In this section we study the system \eqref{eq:system} when $\beta(n) \to \beta \in [0,\infty)$.
Recall that $\Gamma(y,x,\nu) = \int_{\Zmb^2} \gamma(y,x,\xtil,\xitil) \,\nu(d\xtil\,d\xitil)$ for $y,x \in \Zmb$ and $\nu \in \Pmc(\Zmb^2)$, where $\gamma$ is some measurable function from $\Zmb^4$ to $\Rmb$.
We make the following assumptions on $\gamma$ and $\Gammatil$.

\begin{Condition}
	\phantomsection
	\label{cond:no_acceleration}
	\begin{enumerate}[(i)]
	\item There exists $\{\gamma_y \in [0,\infty) : y \in \Zmb\}$ such that
	$0 \le \gamma(y,x,\xtil,\xitil) \le \gamma_y$ and $0 \le \Gammatil(y,\xitil,x,\xtil) \le \gamma_y$ for all $y,x,\xtil,\xitil \in \Zmb$ and $C_\gamma := \int_\Zmb |y|\gamma_y\,\measurea(dy) < \infty$.
	\item $\{X_i(0)\}$ are i.i.d.\ with some common probability distribution $\mu(0) \in \Pmc(\Zmb)$ and $\Emb|X_i(0)|<\infty$. $\{\xi_{ij}(0)\}$ are i.i.d.\ with some common probability distribution $\theta(0) \in \Pmc(\Zmb)$ and $\Emb|\xi_{ij}(0)|<\infty$.
	\end{enumerate}
\end{Condition}

\begin{Remark}
	\phantomsection
	\label{rmk:Lipschitz}
	\begin{enumerate}[(a)]
	\item 
		Condition \ref{cond:no_acceleration}(i) holds clearly if the system is finite state, such as $\gamma(y,x,\xtil,\xitil)=0$ whenever $|y+x| > r$, for some $r \in \Nmb$.
	\item 
		Noting that every bounded function on $\Zmb^d$ is automatically Lipschitz, we see that {the function $(x,\xtil,\xitil) \mapsto \gamma(y,x,\xtil,\xitil)$ is $\gamma_y$-Lipschitz by Condition \ref{cond:no_acceleration}(i). In fact,
		\begin{equation}
			\label{eq:gamma-Lipschitz}
			|\gamma(y,x_1,x_2,\xi)-\gamma(y,x_1',x_2',\xi')| \le \gamma_y \one_{\{(x_1,x_2,\xi) \ne (x_1',x_2',\xi')\}}, \quad \forall\, y,x_1,x_2,\xi,x_1',x_2',\xi' \in \Zmb.
		\end{equation}}
	\end{enumerate}
\end{Remark}

The next two theorems show that, under Condition \ref{cond:no_acceleration}, the limiting system is given by the pathwise unique solution to the following equations:
\begin{align}
	X_i(t) & = X_i(0) + \int_{\Xmb_t} y\one_{[0,\Gamma(y,X_i(s-),\nu_i(s-))]}(z) \, N_i(ds\,dy\,dz), \notag \\
	\xi_{ij}(t) & = \xi_{ij}(0) + \int_{\Xmb_t} y\one_{[0,\beta\Gammatil(y,\xi_{ij}(s-),X_i(s-),X_j(s-))]}(z) \, N_{ij}(ds\,dy\,dz), \label{eq:no_acceleration_system} \\
	\nu_i(t) & = \lim_{n \to \infty} \frac{1}{n} \sum_{j=1}^n \delta_{(X_j(t),\xi_{ij}(t))}, \notag
\end{align}
where the limit in $\nu_i(t)$ is understood as almost surely in $\Pmc(\Zmb^2)$ for each $i \in \Nmb$ and $t \in [0,T]$.
Note that the PRMs are the same as those in \eqref{eq:system}, for ease of deriving Theorem \ref{thm:no_acceleration_2} below.

The proofs of Theorems \ref{thm:no_acceleration_1} and \ref{thm:no_acceleration_2} are given in Section \ref{sec:pf-no-acc}.

\begin{Theorem}
	\phantomsection
	\label{thm:no_acceleration_1}
	
	\begin{enumerate}[(a)]
	\item 
		Suppose Condition \ref{cond:no_acceleration}(ii) holds. 
		Also suppose that there exists $\{\gamma_y \in [0,\infty) : y \in \Zmb\}$ such that
		\begin{equation}
			\label{eq:weaker-assumption}
			0 \le \gamma(y,x,\xtil,\xitil) \le \gamma_y(1+|x|), \: 0 \le \Gammatil(y,\xitil,x,\xtil) \le \gamma_y(1+|\xitil|+|x|+|\xtil|)
		\end{equation}
		for all $y,x,\xtil,\xitil \in \Zmb$, and $C_\gamma := \int_\Zmb |y|\gamma_y\,\measurea(dy) < \infty$.
		Then the system \eqref{eq:no_acceleration_system} has a weak solution.
	\item
		Suppose Condition \ref{cond:no_acceleration} holds.
		Then the system \eqref{eq:no_acceleration_system} has a unique pathwise solution.
		Moreover, $X_i$ are i.i.d., $\{(X_j[t],\xi_{ij}[t]) : j \in \Nmb, j \ne i\}$ are i.i.d.\ conditioning on $X_i[t]$, and $\nu_i(t) = \Lmc((X_j(t),\xi_{ij}(t)) \,|\, X_i[t]) = \Phi_t(X_i[t])$ for each $j \ne i$, where $\Phi_t \colon \Dmb([0,t]:\Zmb) \to \Pmc(\Zmb^2)$ is some measurable map independent of the choice of $i$.
	\end{enumerate}	
\end{Theorem}

\begin{Remark}
	Using Theorem \ref{thm:no_acceleration_1}, the system \eqref{eq:no_acceleration_system} could be rewritten in the following equivalent and perhaps more familiar form:
	\begin{align*}
		X_i(t) & = X_i(0) + \int_{\Xmb_t} y\one_{[0,\Gamma(y,X_i(s-),\nu_i(s-))]}(z) \, N_i(ds\,dy\,dz), \\
		\xi_{ij}(t) & = \xi_{ij}(0) + \int_{\Xmb_t} y\one_{[0,\beta\Gammatil(y,\xi_{ij}(s-),X_i(s-),X_j(s-))]}(z) \, N_{ij}(ds\,dy\,dz), \\
		\nu_i(t) & = \Lmc((X_k(t),\xi_{ik}(t)) \,|\, X_i[t]) = \Phi_t(X_i[t]), \quad \forall k \ne i.
	\end{align*}
\end{Remark}

For $i \in [n]$ and $t \in [0,T]$, {define the following trajectory and marginal empirical measures:}
\begin{equation}
	\label{eq:nu_i_n}
	\nu^n_i := \frac{1}{n} \sum_{j=1}^n \delta_{(X_j^n(\cdot),\xi_{ij}^n(\cdot))}, \quad \mu^n := \frac{1}{n} \sum_{i=1}^n \delta_{X_i^n(\cdot)}, \quad \mu^n(t) := \frac{1}{n} \sum_{i=1}^n \delta_{X_i^n(t)}.
\end{equation}

\begin{Theorem}
	\label{thm:no_acceleration_2}
	Suppose Condition \ref{cond:no_acceleration} holds.
	Then 
	\begin{enumerate}[(a)]
	\item 
		There exists some $\kappa =\kappa(T,\beta) < \infty$ such that
		\begin{equation}
			\max_{i \in [n]} \Emb \|X_i^n-X_i\|_{*,T} + \max_{i,j \in [n]} \Emb \|\xi_{ij}^n-\xi_{ij}\|_{*,T} \le \frac{\kappa}{\sqrt{n}} + \kappa |\beta(n)-\beta|. \label{eq:no_acc_LLN1}
		\end{equation}
	\item (POC) For any $k \in \Nmb$, as $n \to \infty$,
	\begin{equation}
		\label{eq:no_acceleration_POC}
		\Lmc(X_1^n,\dotsc,X_k^n) \to \mu^{\otimes k}, \quad \Lmc(X_1^n(t),\dotsc,X_k^n(t)) \to [\mu(t)]^{\otimes k} \text{ for each } t \in [0,T],
	\end{equation}	
	where $\mu := \Lmc(X_i) \in \Pmc(\Dmb([0,T]:\Zmb))$ and $\mu(t) := \Lmc(X_i(t)) \in \Pmc(\Zmb)$ for $i \in \Nmb$.
	\item
		(LLN) As $n \to \infty$,
		\begin{align}
			\nu^n_i & \to \nu_i := \lim_{n \to \infty} \frac{1}{n} \sum_{j=1}^n \delta_{(X_j(\cdot),\xi_{ij}(\cdot))} \text{ in $\Pmc(\Dmb([0,T]:\Zmb^2))$ in probability}, \label{eq:no_acc_LLN_nu_i} \\
			\nu^n_i(t) & \to \nu_i(t) \text{ in $\Pmc(\Zmb^2)$ in probability, for each $t \in [0,T]$}, \label{eq:no_acc_LLN_nu_i_t}
		\end{align}
		for each $i \in [n]$, and
		\begin{align}
			\mu^n & \to \mu \text{ in $\Pmc(\Dmb([0,T]:\Zmb))$ in probability},
			\label{eq:no_acc_LLN_mu_i} \\
			\mu^n(t) & \to \mu(t) \text{ in $\Pmc(\Zmb)$ in probability, for each $t \in [0,T]$}.
			\label{eq:no_acc_LLN_mu_i_t}
		\end{align}
	\end{enumerate}
\end{Theorem}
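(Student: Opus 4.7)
The plan is to prove (a) by coupling \eqref{eq:system} with \eqref{eq:no_acceleration_system} through the common PRMs $\{N_i, N_{ij}\}$ and running a Gronwall argument for the pair
$$ a_n(t) := \max_{i \in [n]}\Emb\|X_i^n - X_i\|_{*,t}, \qquad b_n(t) := \max_{i,j \in [n]}\Emb\|\xi_{ij}^n - \xi_{ij}\|_{*,t}. $$
Parts (b) and (c) then come essentially for free from (a) together with the conditional i.i.d.\ structure of the limit supplied by Theorem~\ref{thm:no_acceleration_1}.

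For (a), using the elementary identity $|\one_{[0,a]}(z) - \one_{[0,b]}(z)| = \one_{[a \wedge b,\, a \vee b]}(z)$ for $a,b \ge 0$, and monotonicity of the resulting non-negative PRM integral in $t$, I would obtain
$$ \|X_i^n - X_i\|_{*,t} \le \int_{\Xmb_t} |y|\, \one_{[\Gamma^n_i(s-) \wedge \Gamma_i(s-),\, \Gamma^n_i(s-) \vee \Gamma_i(s-)]}(z) \, N_i(ds\,dy\,dz), $$
where $\Gamma^n_i(s) = \Gamma(y, X_i^n(s), \nu^n_i(s))$ and $\Gamma_i(s) = \Gamma(y, X_i(s), \nu_i(s))$. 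Taking expectations and invoking the intensity of $N_i$ reduces matters to estimating $\Emb|\Gamma^n_i(s) - \Gamma_i(s)|$. I would insert the intermediate measure $\nutil_i(s) := \tfrac{1}{n}\sum_{j=1}^n \delta_{(X_j(s), \xi_{ij}(s))}$ constructed from the limit variables, and split the difference into (i) the $X$-argument shift, controlled by $\gamma_y a_n(s)$ via the Lipschitz estimate of Remark~\ref{rmk:Lipschitz}(b); (ii) the swap of $\nu^n_i$ for $\nutil_i$, controlled by $\gamma_y(a_n(s) + b_n(s))$ by the same Lipschitz bound and exchangeability; and (iii) the key piece $\bigl|\int \gamma(y, X_i, \cdot, \cdot)\,d(\nutil_i - \nu_i)\bigr|$.

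For piece (iii), Theorem~\ref{thm:no_acceleration_1}(b) enters decisively: conditionally on $X_i[s]$, the family $\{(X_j(s),\xi_{ij}(s)) : j \ne i\}$ is i.i.d.\ with common law $\nu_i(s)$, so a conditional second-moment computation yields $\Emb\bigl|\tfrac{1}{n}\sum_j \gamma(y,X_i,X_j,\xi_{ij}) - \int \gamma(y,X_i,\cdot)\,d\nu_i\bigr|^2 \le \kappa \gamma_y^2/n$, producing an $O(\gamma_y/\sqrt{n})$ contribution after Cauchy--Schwarz. The analogous estimate for $\xi$ uses $|\beta(n)\Gammatil^n_{ij} - \beta\Gammatil_{ij}| \le |\beta(n)-\beta|\gamma_y + \beta\gamma_y(|\xi_{ij}^n - \xi_{ij}| + |X_i^n - X_i| + |X_j^n - X_j|)$. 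Combining and using $C_\gamma = \int_\Zmb |y|\gamma_y\,\measurea(dy) < \infty$ gives
$$ a_n(t) + b_n(t) \le \kappa\bigl(|\beta(n) - \beta| + n^{-1/2}\bigr) + \kappa(1+\beta)\int_0^t \bigl(a_n(s) + b_n(s)\bigr)\,ds, $$
and Gronwall yields \eqref{eq:no_acc_LLN1}.

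Part (b) is immediate from (a): $L^1$ convergence of $(X_1^n,\dotsc,X_k^n)$ to $(X_1,\dotsc,X_k)$ in $\Dmb([0,T]:\Zmb)^k$, combined with the i.i.d.\ property of $\{X_i\}$ from Theorem~\ref{thm:no_acceleration_1}(b), gives the joint convergence to $\mu^{\otimes k}$; the marginal at $t$ then follows because any fixed $t$ is a continuity point of $X_i$ with probability one. For (c), I would decompose $\dBL(\nu^n_i, \nu_i) \le \dBL(\nu^n_i, \nutil_i) + \dBL(\nutil_i, \nu_i)$: the first term is dominated by $a_n(T) + b_n(T) \to 0$ via (a), while the second, conditionally on $X_i$, is the empirical measure of conditionally i.i.d.\ samples drawn from $\nu_i$ and hence converges to $\nu_i$ in $\dBL$ almost surely by Varadarajan's theorem, whence dominated convergence lifts this to convergence in expectation and thus in probability. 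The convergences of $\mu^n$ and $\mu^n(t)$ are analogous, using the unconditional i.i.d.\ property of $\{X_j\}$. The main delicate point is piece (iii) above: one has to recognize that the correct centering for $\tfrac{1}{n}\sum_j \gamma(y,X_i,X_j,\xi_{ij})$ is the $X_i$-measurable \emph{conditional} expectation $\int \gamma\,d\nu_i$ rather than a deterministic mean, so that Theorem~\ref{thm:no_acceleration_1}(b) is being used in a genuine way; once this is set up, the remainder is a routine coupling-and-Gronwall computation.
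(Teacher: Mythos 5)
Your proposal is correct and follows essentially the same route as the paper's proof: a coupling through the shared PRMs, Lipschitz bounds from Condition \ref{cond:no_acceleration}(i), the conditional i.i.d.\ concentration estimate (the paper's Lemma \ref{lem:iid-moment} applied given $X_i[s]$, exactly your piece (iii)), and Gronwall for (a), with (b) and (c) deduced from (a) plus Theorem \ref{thm:no_acceleration_1}(b) via the intermediate empirical measures of the limit variables. The only cosmetic difference is that you name Varadarajan's theorem for the a.s.\ convergence of the conditionally i.i.d.\ empirical measures, where the paper folds this into the definition of $\nu_i$ and Theorem \ref{thm:no_acceleration_1}(b); the substance is identical.
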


\begin{Remark}
	(a) We abuse the notation to use $\nu_i^n,\nu_i,\mu^n,\mu$ to denote the empirical measures on the path space, and use $\nu_i^n(t),\nu_i(t),\mu^n(t),\mu(t)$ to denote the processes of the marginal empirical measures.
	We always precisely state the space to avoid the ambiguity in statements such as \eqref{eq:no_acc_LLN_nu_i} and \eqref{eq:no_acc_LLN_mu_i}.
	
	(b) Although \eqref{eq:no_acc_LLN_mu_i} only states the convergence on the path space, by applying standard arguments (cf.\ \cite[Theorem 4.7 and Lemma 4.8]{Meleard1996asymptotic}), one can make use of the fact that $\mu$ is deterministic and obtain suitable controls of jump sizes of $X_i$, to argue that the process $\{\mu^n(t) : t \in [0,T]\}$ converges in probability to $\{\mu(t):t \in [0,T]\}$ in the space $\Dmb([0,T]:\Pmc(\Zmb))$ endowed with the uniform topology.
\end{Remark}

\begin{Remark}
	(a) We note that $\Lmc(\xi_{ii}^n) \ne \Lmc(\xi_{ij}^n)$ in \eqref{eq:system} (resp.\ $\Lmc(\xi_{ii}) \ne \Lmc(\xi_{ij})$ in \eqref{eq:no_acceleration_system}) for $j \ne i$.
	However, the contribution of $\xi_{ii}^n$ to $\nu_i^n$ (resp.\ $\xi_{ii}$ to $\nu_i$) is negligible.
	Therefore one does not have to worry about the special evolution of $\xi_{ii}^n$.
	One may also simply define $\nu_i^n(t) = \frac{1}{n} \sum_{j \ne i}^n \delta_{(X_j^n(t),\xi_{ij}^n(t))}$ and the LLN, POC, and CLT results in this work will still be valid.

	(b) Although in this paper we consider the case of directed graphs, that is, we do not assume $\xi_{ij}^n = \xi_{ji}^n$ for $j \ne i$, we note that the results naturally extend to the undirected graph scenario, with an additional symmetry assumption $\Gammatil(y,\xitil,x,\xtil) = \Gammatil(y,\xitil,\xtil,x)$ and minor changes to the proofs.
\end{Remark}

\subsection{Systems with independent edge dynamics}
\label{sec:iid}

In this section we consider a system that generalizes the $\beta(n) \to 0$ limit of \eqref{eq:system}, in that we allow for non-Markovian edge processes (such as processes with delays and renewal processes).
Recall the node process $X_i^n$ and the local empirical measure process $\nu_i^n$
\begin{align}
	\label{eq:iid_system} 
	\begin{aligned}
		X_i^n(t) & = X_i(0) + \int_{\Xmb_t} y\one_{[0,\Gamma(y,X_i^n(s-),\nu^n_i(s-))]}(z) \, N_i(ds\,dy\,dz), \\
		\nu^n_i(t) & = \frac{1}{n} \sum_{j=1}^n \delta_{(X_j^n(t),\xi_{ij}^n(t))}, \quad i=1,\dotsc,n.
	\end{aligned}
\end{align}
Suppose $\xi_{ij}^n(t)=\xi_{ij}(t)$ are adapted, i.i.d.\ with $\Lmc(\xi_{ij}) = \theta \in \Pmc(\Dmb([0,T]:\Zmb))$, and independent of $\{X_i(0),N_i\}$.

We make the following assumption on $\gamma$.
\begin{Condition}
	\label{cond:iid}
	There exists $\{\gamma_y \in [0,\infty) : y \in \Zmb\}$ such that
	$0 \le \gamma(y,x,\xtil,\xitil) \le \gamma_y$ for all $y,x,\xtil,\xitil \in \Zmb$ and $C_\gamma := \int_\Zmb |y|\gamma_y\,\measurea(dy) < \infty$.
\end{Condition}

The next theorem shows that, under Condition \ref{cond:iid}, the limiting system is given by the unique solution to the following equations:
\begin{align}
	\label{eq:iid_system_limit}
	\begin{aligned}
		X_i(t) & = X_i(0) + \int_{\Xmb_t} y\one_{[0,\Gamma(y,X_i(s-),\nu(s-))]}(z) \, N_i(ds\,dy\,dz), \\
		\nu(t) & := \mu(t) \otimes \theta(t) := \Lmc(X_i(t)) \otimes \Lmc(\xi_{ij}(t)), \quad i,j \in \Nmb.
	\end{aligned}
\end{align}
The proof of Theorem \ref{thm:iid_LLN} is a standard application of a coupling argument.
For completeness it is given in Appendix \ref{sec:pf-iid}.

\begin{Theorem}
	\label{thm:iid_LLN}
	Suppose Condition \ref{cond:iid} hold. 
	Then
	\begin{enumerate}[(a)]
	\item The system \eqref{eq:iid_system_limit} has a unique pathwise solution.
	\item There exists some $\kappa =\kappa(T) < \infty$ such that
	\begin{equation}
		\label{eq:iid_LLN1}
		\max_{i \in [n]} \Emb \|X_i^n-X_i\|_{*,T} \le \frac{\kappa}{\sqrt{n}}.
	\end{equation} 
	\item (POC) For any $k \in \Nmb$, as $n \to \infty$,
	\begin{equation}
		\label{eq:iid_POC}
		\Lmc(X_1^n,\dotsc,X_k^n) \to \mu^{\otimes k}, \quad \Lmc(X_1^n(t),\dotsc,X_k^n(t)) \to [\mu(t)]^{\otimes k} \text{ for each } t \in [0,T],
	\end{equation}	
	where $\mu := \Lmc(X_i) \in \Pmc(\Dmb([0,T]:\Zmb))$ for $i \in \Nmb$.
	\item (LLN) As $n \to \infty$, for each $i \in [n]$,
	\begin{align}
		\nu^n_i := \frac{1}{n} \sum_{j=1}^n \delta_{(X_j^n(\cdot),\xi_{ij}^n(\cdot))} & \to \nu := \mu \otimes \theta \text{ in $\Pmc(\Dmb([0,T]:\Zmb^2))$ in probability}, \label{eq:iid_LLN_nu_i} \\
		\nu^n_i(t) & \to \nu(t) \text{ in $\Pmc(\Zmb^2)$ in probability, for each $t \in [0,T]$}. \label{eq:iid_LLN_nu_i_t}
	\end{align}
	Moreover, as $n \to \infty$,
	\begin{align}		
		\mu^n := \frac{1}{n} \sum_{i=1}^n \delta_{X_i^n(\cdot)} & \to \mu \text{ in $\Pmc(\Dmb([0,T]:\Zmb))$ in probability}, \label{eq:iid_LLN_mu_i} \\
		\mu^n(t) := \frac{1}{n} \sum_{i=1}^n \delta_{X_i^n(t)} & \to \mu(t) \text{ in $\Pmc(\Zmb)$ in probability, for each $t \in [0,T]$}. \label{eq:iid_LLN_mu_i_t}
	\end{align}	
	\end{enumerate}
\end{Theorem}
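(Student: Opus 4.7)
The plan is to couple the prelimit and limit processes through shared Poisson random measures and shared edge processes, then run a standard McKean--Vlasov style Gronwall argument whose only nontrivial ingredient is a $1/\sqrt{n}$ rate of convergence of i.i.d.\ empirical measures to their common law.

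First I would establish part (a). Because the edges $\xi_{ij}$ are prescribed i.i.d.\ with deterministic marginal law $\theta(s)$, the map $\mu \mapsto \mu \otimes \theta$ is fixed, so the jump intensity in \eqref{eq:iid_system_limit} depends only on $X_i(s-)$ and the flow of one-dimensional marginals $\mu(s-) = \Lmc(X_i(s-))$. Under Condition \ref{cond:iid} the rate $\Gamma(y,x,\mu\otimes\theta)$ is bounded by $\gamma_y$ and is $\gamma_y$-Lipschitz in $(x,\mu)$ with respect to $|\cdot|+\dBL(\cdot,\cdot)$ (Remark \ref{rmk:Lipschitz}(b)). Pathwise existence and uniqueness then follow by a Picard iteration on $[0,T]$: define $\mu^{(0)}=\mu(0)$, construct $X_i^{(k+1)}$ driven by $N_i$ using $\mu^{(k)}\otimes\theta$ in the intensity, set $\mu^{(k+1)}=\Lmc(X_i^{(k+1)})$, and observe that the recursion is a contraction in $\sup_{t\le T}\dBL(\mu^{(k)}(t),\mu^{(k+1)}(t))$ once the boundedness bound $\int|y|\gamma_y\,\measurea(dy)=C_\gamma<\infty$ is combined with Gronwall's lemma. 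This yields a unique law $\mu$ and hence, via the Poisson SDE with $\Gamma(\cdot,\cdot,\mu\otimes\theta)$, a unique strong solution $X_i$ defined on the original probability space with the same PRMs $N_i$ and edge processes $\xi_{ij}$ as in \eqref{eq:iid_system}.

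Next, for (b) I would couple: let $X_i$ solve \eqref{eq:iid_system_limit} driven by the same $N_i$ as $X_i^n$ with the same initial condition $X_i(0)$. Standard $L^1$ estimates for pure-jump processes give
\begin{equation*}
\Emb\|X_i^n-X_i\|_{*,t} \le \int_0^t \int_\Zmb |y|\,\Emb\bigl|\Gamma(y,X_i^n(s),\nu_i^n(s))-\Gamma(y,X_i(s),\nu(s))\bigr|\,\measurea(dy)\,ds.
\end{equation*}
Using the Lipschitz bound on $\gamma$, this is controlled by $C_\gamma\int_0^t\Emb[|X_i^n(s)-X_i(s)| + \dBL(\nu_i^n(s),\nu(s))]\,ds$. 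I would then split the empirical measure distance with the intermediate measure $\mutil_i^n(s) := \frac{1}{n}\sum_{j=1}^n \delta_{(X_j(s),\xi_{ij}(s))}$: since $\xi_{ij}^n=\xi_{ij}$, the coupling error satisfies $\dBL(\nu_i^n(s),\mutil_i^n(s)) \le \frac{1}{n}\sum_j |X_j^n(s)-X_j(s)|$, whose expectation is at most $\max_j \Emb\|X_j^n-X_j\|_{*,s}$ by exchangeability over $j$. Combining, setting $e_n(t) := \max_i \Emb\|X_i^n-X_i\|_{*,t}$, I obtain
\begin{equation*}
e_n(t) \le 2C_\gamma \int_0^t e_n(s)\,ds + C_\gamma\int_0^t \max_i\Emb\,\dBL(\mutil_i^n(s),\nu(s))\,ds.
\end{equation*}

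The main obstacle, and the one that delivers the $1/\sqrt{n}$ rate, is bounding $\Emb\,\dBL(\mutil_i^n(s),\nu(s))$. For each fixed $i$ the pairs $\{(X_j(s),\xi_{ij}(s))\}_{j=1}^n$ are i.i.d.\ with law $\mu(s)\otimes\theta(s)=\nu(s)$: the $X_j$'s are i.i.d.\ by construction, the $\xi_{ij}$'s are i.i.d.\ across $j$ by hypothesis, and $(X_j)$ is independent of $(\xi_{ij})_j$ since $X_j$ depends only on $X_j(0)$ and $N_j$. Consequently the jump sizes of $X_j$ are uniformly $L^1$-controlled by $C_\gamma T$, and a standard second-moment empirical-process estimate (as in \cite{Meleard1996asymptotic}) yields $\Emb\,\dBL(\mutil_i^n(s),\nu(s)) \le \kappa/\sqrt{n}$ uniformly in $i$ and $s \in [0,T]$. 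Plugging this into the displayed inequality and applying Gronwall gives \eqref{eq:iid_LLN1}.

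Finally, (c) follows from (b) because, conditional on the PRMs, the coupled processes $X_1,\ldots,X_k$ are i.i.d.\ with law $\mu$, and $\Emb\sum_{i=1}^k \|X_i^n-X_i\|_{*,T} \to 0$ implies joint convergence in law. For (d), the convergence \eqref{eq:iid_LLN_nu_i} (and likewise for $\mu^n$) is deduced by writing $\nu_i^n = (\nu_i^n - \mutil_i^n) + \mutil_i^n$ and applying the $L^1$ bound on the first part together with Glivenko--Cantelli for the i.i.d.\ empirical measure $\mutil_i^n$. Marginal convergence at a fixed $t$ follows identically. Uniqueness of the limit in (c)--(d) is automatic since $\mu$ is already identified via part (a).
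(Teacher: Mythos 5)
Your parts (a), (c) and (d) follow essentially the paper's route (standard McKean--Vlasov well-posedness, the Sznitman coupling argument for propagation of chaos, and a qualitative Glivenko--Cantelli step for the LLN), and your coupling set-up and the splitting of $\nu_i^n$ through $\mutil_i^n=\frac1n\sum_j\delta_{(X_j,\xi_{ij})}$ in (b) are also the same as in the paper. The problem is the one step you flag as "the main obstacle": the claim that a standard second-moment estimate yields $\Emb\,\dBL(\mutil_i^n(s),\nu(s))\le \kappa/\sqrt n$ uniformly in $s$ and $i$. A second-moment (variance) computation gives the $n^{-1/2}$ rate only for a \emph{fixed} bounded test function, i.e.\ $\Emb|\lan f,\mutil_i^n(s)-\nu(s)\ran|\le \|f\|_\infty/\sqrt n$; it does not survive the supremum over the bounded-Lipschitz ball. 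In fact, on a countable discrete space such as $\Zmb^2$ the function $\tfrac12\,\mathrm{sign}(\mutil_i^n(\{x\})-\nu(s)(\{x\}))$ is $1$-Lipschitz and bounded by $1$, so $\dBL(\mutil_i^n(s),\nu(s))\ge \tfrac12\sum_{x\in\Zmb^2}|\mutil_i^n(s)(\{x\})-\nu(s)(\{x\})|$, and the expected total-variation error of an $n$-sample empirical measure can decay arbitrarily slowly when $\nu(s)$ is spread over many atoms (Condition \ref{cond:iid} imposes no moment or concentration hypothesis on $\theta$, and the theorem's hypotheses give at most first moments for the nodes). So the inequality you need is false in general, and your Gronwall bound \eqref{eq:iid_LLN1} does not follow as written.

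The fix, which is exactly what the paper does, is never to pass to $\dBL$ for the sampling-error term. Keep the concrete integrand: for each fixed $y$ and $s$, the quantity to control is
\begin{equation*}
\Emb\left|\frac1n\sum_{j=1}^n \gamma(y,X_i(s),X_j(s),\xi_{ij}(s))-\int_{\Zmb^2}\gamma(y,X_i(s),\xtil,\xitil)\,\nu(s)(d\xtil\,d\xitil)\right|,
\end{equation*}
which involves a \emph{single} function bounded by $\gamma_y$ evaluated against the (conditionally) i.i.d.\ pairs $(X_j(s),\xi_{ij}(s))$; Lemma \ref{lem:iid-moment} then gives the bound $\kappa\gamma_y/\sqrt n$, and integrating against $|y|\,\measurea(dy)$ uses only $C_\gamma<\infty$. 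Your coupling term $\frac1n\sum_j\Emb|X_j^n(s)-X_j(s)|$ is handled exactly as you propose, and Gronwall then yields \eqref{eq:iid_LLN1}. With that replacement the remainder of your argument (parts (c) and (d), where no rate is needed and qualitative convergence of $\mutil_i^n$ suffices) goes through.
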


\begin{Remark}
	Theorem \ref{thm:iid_LLN} is not a simple consequence of Theorems \ref{thm:no_acceleration_1} and \ref{thm:no_acceleration_2} for the case $\beta(n) \to 0$, although it seems to be.
	In particular, note that the system \eqref{eq:iid_system} only assumes $\xi_{ij}^n$ to be adapted, which may be a non-Markovian process {(whose evolution is give by, e.g.\, path-dependent stochastic differential equations or renewal processes)}.
\end{Remark}

\section{Systems with accelerated edge dynamics}
\label{sec:acceleration}

In this section we study the system \eqref{eq:system} with accelerated edge dynamics compared to the node dynamics, that is when $\beta(n) \to \infty$.
In addition to Condition \ref{cond:no_acceleration}, we make the following assumption on $\Gamma$ and $\Gammatil$.

\begin{Condition}
	\phantomsection
	\label{cond:acceleration} 
	\begin{enumerate}[(i)]
	\item 	
		For $\xitil_0,x,\xtil \in \Zmb$, denote by $\Ytil_{(x,\xtil,\xitil_0)}$ the continuous-time Markov chain with transition rate matrix $\Gammatil(y,\xitil,x,\xtil)$ (representing the rate of jumping from $\xitil$ to $\xitil+y$) starting at $\xitil_0$.
		Suppose that there exists a unique invariant distribution $Q(x,\xtil,d\xitil)$ of $\Ytil_{(x,\xtil,\xitil_0)}$ and
		$$\left| \Emb \gamma(y,x,\xtil,\Ytil_{(x,\xtil,\xitil_0)}(t)) - \int_\Zmb \gamma(y,x,\xtil,\xitil) Q(x,\xtil,d\xitil) \right| \le \widetilde\kappa(t) \gamma_y (1+|x|+|\xtil|)$$
		for some $\widetilde\kappa(t)$ such that $\int_0^\infty \widetilde\kappa(t)\,dt < \infty$.
	\item
		$\Emb[|X_i(0)|^2] < \infty$ and $C_{\gamma,2} := \int_\Zmb |y|^2 \gamma_y\,\measurea(dy) < \infty$.
	\end{enumerate}
\end{Condition}

\begin{Remark}
	\phantomsection
	\label{rmk:acc}
	\begin{enumerate}[(a)]
	\item Condition \ref{cond:acceleration}(i) holds if $\Ytil_{(x,\xtil,\xitil_0)}$ is uniformly exponentially ergodic in $(x,\xtil,\xitil_0)$ in the following sense: There exists $\alpha > 0$ and $C > 0$ such that 
	\begin{equation*}
		\sum_{\xitil \in \Zmb} |\Pmb(\Ytil_{(x,\xtil,\xitil_0)}(t)=\xitil) - Q(x,\xtil,\{\xitil\})| \le C e^{-\alpha t} (1+|x|+|\xtil|),
	\end{equation*}
	for all $\xitil_0,x,\xtil \in \Zmb$.
	{Such an ergodicity could be obtained under suitable assumptions on $\Gammatil$ (see e.g.\ the sufficient criteria in \cite{Tweedie1981Criteria}).}
	In particular, it is satisfied if the system is finite state (cf.\ \cite{Tweedie1981Criteria} and \cite[Theorem 6.5]{Anderson1991Strong}). 
	\item The assumption $\int_0^\infty \widetilde\kappa(t)\,dt < \infty$ in Condition \ref{cond:acceleration}(i) is needed to obtain the rate of convergence in Theorem \ref{thm:acc_LLN}.
	If one is only interested in the convergence of $X_i^n, \mu^n, \nu_i^n$, then it would be sufficient (see Remark \ref{rmk:kappa}) to assume that $\lim_{t \to \infty} \frac{1}{t}\int_0^t\widetilde\kappa(s)\,ds = 0$.
	\end{enumerate}
\end{Remark}

The next theorem shows that, under Conditions \ref{cond:no_acceleration} and \ref{cond:acceleration}, the limiting system is given by the unique solution to the following equations:
\begin{align}
	X_i(t) & = X_i(0) + \int_{\Xmb_t} y\one_{[0,\Gamma(y,X_i(s-),\nu_i(s-))]}(z) \, N_i(ds\,dy\,dz), \notag \\
	\nu_i(t)(d\xtil\,d\xitil) & = \mu_t(d\xtil)\,Q(X_i(t),\xtil,d\xitil), \label{eq:acceleration_system} \\
	\mu & = \Lmc(X_i), \quad \mu_t = \Lmc(X_i(t)), \quad i \in \Nmb. \notag
\end{align}
We note that here the system does not suffer from the path-dependent subtlety in \eqref{eq:no_acceleration_system}, and hence serves as a simpler approximation of \eqref{eq:system} than \eqref{eq:no_acceleration_system}, when $\beta(n)$ is large.

The proof of Theorem \ref{thm:acc_LLN} is given in Section \ref{sec:pf-acc}.

\begin{Theorem}
	\label{thm:acc_LLN}
	Suppose Conditions \ref{cond:no_acceleration} and \ref{cond:acceleration} hold. Then
	\begin{enumerate}[(a)]
	\item There is a unique pathwise solution $\{X_i\}$ to the system \eqref{eq:acceleration_system}.
	\item There exists some $\kappa =\kappa(T) < \infty$ such that
		\begin{align}
			\Emb \|X_i^n-X_i\|_{*,T} & \le \frac{\kappa}{\sqrt{n}}+\frac{\kappa}{\sqrt{\beta(n)}}, \label{eq:acc_LLN1}
		\end{align}
	\item (POC) For any $k \in \Nmb$, as $n \to \infty$,
	\begin{equation}
		\label{eq:acceleration_POC}
		\Lmc(X_1^n,\dotsc,X_k^n) \to \mu^{\otimes k}, \quad \Lmc(X_1^n(t),\dotsc,X_k^n(t)) \to [\mu(t)]^{\otimes k} \text{ for each } t \in [0,T].
	\end{equation}
	\item (LLN) As $n \to \infty$,
		\begin{equation}
			\mbox{$\mu^n \to \mu$ in $\Pmc(\Dmb[0,T]:\Zmb)$ and $\mu^n(t) \to \mu(t)$ in $\Pmc(\Zmb)$, in probability}, \label{eq:acc_LLN2}
		\end{equation}
		for each $t \in [0,T]$, where $\mu^n$ and $\mu^n(t)$ are introduced in \eqref{eq:nu_i_n}.
	\item Suppose in addition that 
		\begin{equation}
			\label{eq:acc_asump_addition}
			\sum_{\xitil \in \Zmb} |\Pmb(\Ytil_{(x,\xtil,\xitil_0)}(t)=\xitil) - Q(x,\xtil,\{\xitil\})| \le C(t) (1+|x|+|\xtil|)
		\end{equation}
		for some positive $C(t)$ such that $\lim_{t \to \infty} C(t)=0$, and $\Ytil_{(x,\xtil,\xitil_0)}$ and $Q(x,\xtil,\{\xitil\})$ are as in Condition \ref{cond:acceleration}.
		Then
		\begin{equation}
			\nu_i^n(t) \to \nu_i(t) \mbox{ in $\Pmc(\Zmb^2)$ in probability, for each $i \in [n]$ and } t \in (0,T]
			\label{eq:acc_LLN3}
		\end{equation}
		and hence
		\begin{equation}
			\eta_i^n(dt\,d\xtil\,d\xitil) := \nu_i^n(t)(d\xtil\,d\xitil) \,dt \to \eta_i(dt\,d\xtil\,d\xitil) := \nu_i(t)(d\xtil\,d\xitil) \, dt \label{eq:acc_LLN4}
		\end{equation}
		in $\Pmc([0,T]\times\Zmb^2)$, in probability, for each $i \in [n]$.
	\end{enumerate}	
\end{Theorem}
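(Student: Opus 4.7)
Because $\nu_i(t)(d\xtil\,d\xitil) = \mu_t(d\xtil)\,Q(X_i(t),\xtil,d\xitil)$ is a deterministic functional of $(X_i(t),\mu_t)$, the rate simplifies dramatically: writing
$$\bar\Gamma(y,x,\mu) := \int_{\Zmb^2} \gamma(y,x,\xtil,\xitil)\,Q(x,\xtil,d\xitil)\,\mu(d\xtil),$$
the system \eqref{eq:acceleration_system} collapses to a classical McKean--Vlasov jump equation driven by $N_i$ with rate $\bar\Gamma(y,X_i(s-),\mu_{s-})$. Condition \ref{cond:no_acceleration} guarantees $\bar\Gamma$ is bounded by $\gamma_y$ and Lipschitz in $(x,\mu)$ with respect to $\dBL$ (cf.\ Remark \ref{rmk:Lipschitz}), so a standard Picard contraction on the map $\mu \mapsto \Lmc(\text{solution driven by }\mu)$ in a suitable $\dBL$-weighted space yields existence and pathwise uniqueness, with $\{X_i\}$ i.i.d.\ by the i.i.d.\ structure of initial conditions and PRMs.

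\textbf{Plan for (b), the main step.} Coupling $X_i^n$ with $X_i$ via the shared PRM $N_i$ and the usual synchronous trick on indicators, one obtains
$$\Emb\|X_i^n - X_i\|_{*,t} \le \int_0^t \int_\Zmb |y|\,\Emb\big|\Gamma(y,X_i^n(s),\nu_i^n(s)) - \bar\Gamma(y,X_i(s),\mu_s)\big|\,\measurea(dy)\,ds.$$
I would split the inner expectation into (I) a term $|\bar\Gamma(y,X_i^n,\mu^n) - \bar\Gamma(y,X_i,\mu)|$, bounded by Lipschitzness of $\bar\Gamma$ together with the $1/\sqrt n$ LLN rate for $\dBL(\mu^n(s),\mu(s))$ (the latter obtained via a coupling to the i.i.d.\ $X_i$); plus (II) the averaging gap
$$\Emb\bigg|\frac{1}{n}\sum_{j=1}^n\Big(\gamma(y,X_i^n,X_j^n,\xi_{ij}^n) - \int\gamma(y,X_i^n,X_j^n,\xitil)\,Q(X_i^n,X_j^n,d\xitil)\Big)\bigg|.$$
For (II) I would partition $[0,T]$ into blocks of length $\Delta$, freeze the slow pair $(X_i^n(s),X_j^n(s))$ on each block (introducing an $O(\Delta)$ rate error per block via the bounded jump rates), and invoke Condition \ref{cond:acceleration}(i) applied to $\Ytil$ sped up by $\beta(n)$: the time-averaged deviation from $Q$ on a block of length $\Delta$ is at most $\frac{1}{\beta(n)\Delta}\int_0^\infty \widetilde\kappa(v)\,dv\cdot\gamma_y(1+|X_i^n|+|X_j^n|)$. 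Summing over the $T/\Delta$ blocks gives a total averaging error of order $T\Delta + T/(\beta(n)\Delta)$, optimised at $\Delta = 1/\sqrt{\beta(n)}$ to yield $O(1/\sqrt{\beta(n)})$. Condition \ref{cond:acceleration}(ii) propagates into a uniform second-moment bound on $X_i^n(t)$, keeping the $(1+|X_i^n|+|X_j^n|)$ factors integrable. Combining (I) and (II) and applying Gronwall's inequality produces \eqref{eq:acc_LLN1}. The principal obstacle here is making the freeze-and-ergodise step quantitative: the slow nodes and the $n^2$ fast edges are fully coupled through the joint dynamics, so one must control the spurious jumps of $(X_i^n,X_j^n)$ inside each block without disrupting the ergodic convergence of the edges, while preserving uniformity in the $n$ pairs.

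\textbf{Plan for (c)--(e).} Part (c) is immediate from (b) and the i.i.d.\ nature of the limit $\{X_i\}$. Part (d) then follows from exchangeability together with the classical equivalence between POC and convergence of the empirical measure to a deterministic limit in $\dBL$. For part (e), the point of the stronger hypothesis \eqref{eq:acc_asump_addition} (pointwise decay $C(t)\to 0$, not necessarily integrable) is that for fixed $t>0$ one has $\beta(n) t \to \infty$: conditioning on the past and using \eqref{eq:acc_asump_addition} with time $\beta(n)t$, the total-variation distance between $\xi_{ij}^n(t)$ and $Q(X_i^n(t),X_j^n(t),\cdot)$ is bounded by $C(\beta(n)t)(1+|X_i^n|+|X_j^n|) \to 0$ in $L^1$; combined with (b), (d), and the Lipschitzness of $(x,\xtil)\mapsto Q(x,\xtil,\cdot)$-integration against test functions, this yields $\nu_i^n(t) \to \mu_t \otimes Q(X_i(t),\cdot,\cdot) = \nu_i(t)$ in $\Pmc(\Zmb^2)$ in probability, establishing \eqref{eq:acc_LLN3}. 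Finally \eqref{eq:acc_LLN4} follows from \eqref{eq:acc_LLN3} by bounded convergence on $[0,T]\times\Zmb^2$.
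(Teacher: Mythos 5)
Your overall route is the same as the paper's: part (a) by reducing to a standard McKean--Vlasov argument, part (b) by a block-freezing scheme with block size $\Delta$, the time change $s\mapsto s/\beta(n)$ so that Condition \ref{cond:acceleration}(i) applies to the frozen edge dynamics, optimization at $\Delta=1/\sqrt{\beta(n)}$ and Gronwall, and parts (c)--(d) by the usual coupling/exchangeability arguments. However, there is a genuine gap at the center of (b). Condition \ref{cond:acceleration}(i) bounds only the conditional \emph{bias}, i.e.\ $|\Emb[\gamma(y,x,\xtil,\Ytil_{(x,\xtil,\xitil_0)}(t))]-\int_\Zmb\gamma(y,x,\xtil,\xitil)\,Q(x,\xtil,d\xitil)|$; it does not make the realized difference $\gamma(y,X_i^n(s),X_j^n(s),\xi_{ij}^n(s))-\int_\Zmb\gamma(y,X_i^n(s),X_j^n(s),\xitil)\,Q(X_i^n(s),X_j^n(s),d\xitil)$ small for a single $j$ -- even at stationarity this is an order-one random variable. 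So your claim that ``the time-averaged deviation from $Q$ on a block of length $\Delta$ is at most $\frac{1}{\beta(n)\Delta}\int_0^\infty\widetilde\kappa(v)\,dv\cdot\gamma_y(1+\cdots)$'' controls only the conditional expectation given the block-start $\sigma$-field of each summand, not $\Emb\bigl|\frac1n\sum_j(\cdots)\bigr|$. One must still average out the zero-conditional-mean fluctuations over $j$, and this is exactly where the paper's proof does its main work: given the block-start data, the frozen edge processes for different $j$ are conditionally independent (they are driven by the independent $N_{ij}$ with block-start-measurable frozen slow arguments), and a second-moment computation of the cross terms gives $\Emb\bigl[\bigl(\frac1n\sum_j Q_{i,j}^{n,\Delta,y}(s)\bigr)^2\bigr]\le\kappa\gamma_y^2\,\widetilde\kappa^2\bigl(\beta(n)(s-\lfloor s/\Delta\rfloor\Delta)\bigr)+4\gamma_y^2/n$, hence the per-block estimate \eqref{eq:exp_ergodic} including an extra $\Delta\gamma_y/\sqrt n$ term that your error tally $T\Delta+T/(\beta(n)\Delta)$ omits. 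Without this conditional-independence/variance step the freeze-and-ergodize argument does not close; the obstacle you name (spurious slow jumps inside a block) is the easy part.

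The same ingredient is missing in your sketch of (e): total-variation closeness of the conditional law of each $\xi_{ij}^n(t)$ to $Q(X_i^n(t),X_j^n(t),\cdot)$ controls the empirical measure of the \emph{conditional laws}, not $\nu_i^n(t)=\frac1n\sum_j\delta_{(X_j^n(t),\xi_{ij}^n(t))}$ itself; the paper inserts the intermediate measure $\frac1n\sum_j\delta_{X_j^{\Delta}(t)}\otimes\Lmc(\xi_{ij}^{n,\Delta}(t)\mid\cdots)$ and again uses conditional independence across $j$ to get a variance bound $\|f\|_\infty^2/n$. Also, \eqref{eq:acc_asump_addition} cannot be invoked with arguments frozen over all of $[0,t]$ since the nodes move on $[0,t]$; one freezes only on a terminal window, choosing $k\Delta$ with $t-k\Delta\ge\Delta/2$, so the relevant time is $\beta(n)(t-k\Delta)\ge\tfrac12\sqrt{\beta(n)}\to\infty$. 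Two minor points: in your term (I) you should not rely on a uniform $1/\sqrt n$ rate for $\Emb\,\dBL(\mu^n(s),\mu(s))$, which is not guaranteed on $\Zmb$ under Condition \ref{cond:no_acceleration} alone; applying Lemma \ref{lem:iid-moment} to the single bounded function $x\mapsto\int_\Zmb\gamma(y,X_i(s),x,\xitil)\,Q(X_i(s),x,d\xitil)$ (conditionally on $X_i$) suffices, as in the fourth term of \eqref{eq:acc_pf_triangle}. Your choice to freeze the prelimit pair $(X_i^n,X_j^n)$ rather than the limit $X_i$ (as the paper does) is workable and changes nothing essential, provided the missing variance step above is supplied.
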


\subsection{Approximation error}

In this section we study the approximation error in terms of $\beta \to \infty$ between two limiting systems \eqref{eq:no_acceleration_system} and \eqref{eq:acceleration_system} obtained as $\beta(n) \to \beta \in [0,\infty)$ and $\beta(n) \to \infty$.
To distinguish the two systems, we rewrite \eqref{eq:no_acceleration_system} as
\begin{align*}
	X_i^\beta(t) & = X_i(0) + \int_{\Xmb_t} y\one_{[0,\Gamma(y,X_i^\beta(s-),\nu_i^\beta(s-))]}(z) \, N_i(ds\,dy\,dz), \\
	\xi_{ij}^\beta(t) & = \xi_{ij}(0) + \int_{\Xmb_t} y\one_{[0,\beta\Gammatil(y,\xi_{ij}^\beta(s-),X_i^\beta(s-),X_j^\beta(s-))]}(z) \, N_{ij}(ds\,dy\,dz), \\
	\nu_i^\beta(t) & = \lim_{n \to \infty} \frac{1}{n} \sum_{j=1}^n \delta_{(X_j^\beta(t),\xi_{ij}^\beta(t))},
\end{align*}
and recall \eqref{eq:acceleration_system}.
The following theorem characterizes the approximation error of these two limiting systems as $\beta \to \infty$.
The proof is given in Section \ref{sec:pf-acc}.

\begin{Theorem}
	\label{thm:comparison}
	Suppose Conditions \ref{cond:no_acceleration} and \ref{cond:acceleration} hold. 
	Then
	\begin{enumerate}[(a)]
	\item 
		There exists some $\kappa =\kappa(T) < \infty$ such that
		\begin{equation}
			\Emb \|X_i^\beta-X_i\|_{*,T} \le \frac{\kappa}{\sqrt{\beta}} \label{eq:comparison_1}
		\end{equation}
		and
		\begin{equation}
			\dBL(\mu^\beta,\mu) \le \frac{\kappa}{\sqrt{\beta}}, \label{eq:comparison_2} 
		\end{equation}
		where $\mu^\beta := \Lmc(X_i^\beta)$.
	\item 
		Suppose in addition that \eqref{eq:acc_asump_addition} holds, then
		\begin{equation}
			\nu_i^\beta(t) \to \nu_i(t) \mbox{ in } \Pmc(\Zmb^2), \mbox{ in probability, for each } i \in \Nmb \mbox{ and } t \in (0,T]
			\label{eq:comparison_3}
		\end{equation}
		and hence
		\begin{equation}
			\eta_i^\beta(dt\,d\xtil\,d\xitil) := \nu_i^\beta(t)(d\xtil\,d\xitil) \,dt \to \eta_i(dt\,d\xtil\,d\xitil) := \nu_i(t)(d\xtil\,d\xitil) \, dt \label{eq:comparison_4}
		\end{equation}
		in $\Pmc([0,T]\times\Zmb^2)$, in probability, for each $i=1,\dotsc,n$.
	\end{enumerate}
\end{Theorem}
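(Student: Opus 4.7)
The plan is to couple $X_i^\beta$ (from \eqref{eq:no_acceleration_system}) and $X_i$ (from \eqref{eq:acceleration_system}) via the common Poisson random measure $N_i$, and to quantify the averaging of the fast edges toward the invariant kernel $Q$. Setting $\Gbar(y,x,\mu) := \int_{\Zmb^2}\gamma(y,x,\xtil,\xitil)\mu(d\xtil)Q(x,\xtil,d\xitil)$ for the effective rate implicit in \eqref{eq:acceleration_system}, the standard symmetric-difference estimate for Poisson integrals driven by a common PRM gives
\begin{align*}
\Emb\|X_i^\beta - X_i\|_{*,t} \le \int_0^t \int_\Zmb |y|\,\Emb\bigl|\Gamma(y,X_i^\beta(s),\nu_i^\beta(s)) - \Gbar(y,X_i(s),\mu_s)\bigr|\,\measurea(dy)\,ds.
\end{align*}
Inserting $\Gbar(y,X_i^\beta(s),\mu^\beta_s)$ as an intermediate splits the right side into a Lipschitz piece (I), which by the $\gamma_y$-Lipschitz property of $\gamma$ (Remark \ref{rmk:Lipschitz}), regularity of $\Gbar$ in $(x,\mu)$ inherited from Conditions \ref{cond:no_acceleration}--\ref{cond:acceleration}, and the inequality $\dBL(\mu^\beta_s,\mu_s) \le \Emb\|X_i^\beta - X_i\|_{*,s}$, is bounded by $\kappa\int_0^t \Emb\|X_i^\beta - X_i\|_{*,s}\,ds$; and an averaging piece (II) that is the heart of the argument.

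To control (II), Theorem \ref{thm:no_acceleration_1}(b) is decisive: for any $j\ne i$ it identifies $\int_{\Zmb^2}\gamma(y,X_i^\beta(s),\xtil,\xitil)\nu_i^\beta(s)(d\xtil\,d\xitil) = \Emb[\gamma(y,X_i^\beta(s),X_j^\beta(s),\xi_{ij}^\beta(s))\mid X_i^\beta[s]]$, and since $X_j^\beta$ is unconditionally independent of $X_i^\beta$, likewise $\int_{\Zmb^2}\gamma(y,X_i^\beta(s),\xtil,\xitil)\mu^\beta_s(d\xtil)Q(X_i^\beta(s),\xtil,d\xitil) = \Emb[\int_\Zmb\gamma(y,X_i^\beta(s),X_j^\beta(s),\xitil)Q(X_i^\beta(s),X_j^\beta(s),d\xitil)\mid X_i^\beta[s]]$. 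By the tower property, (II) is therefore bounded by $\int_0^t\int_\Zmb |y|\,\Emb|\Emb[Z_s\mid X_i^\beta[s],X_j^\beta[s]]|\,\measurea(dy)\,ds$, where $Z_s := \gamma(y,X_i^\beta(s),X_j^\beta(s),\xi_{ij}^\beta(s)) - \int_\Zmb\gamma(y,X_i^\beta(s),X_j^\beta(s),\xitil)Q(X_i^\beta(s),X_j^\beta(s),d\xitil)$.

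Conditional on the paths $X_i^\beta[s],X_j^\beta[s]$, the edge $\xi_{ij}^\beta$ is a time-inhomogeneous Markov chain with generator $\beta\Gammatil(\cdot,\cdot,X_i^\beta(\cdot),X_j^\beta(\cdot))$; letting $\tau(s)>0$ denote the elapsed time since the last jump of the pair before $s$ (over which the chain has been running at a fixed rate), Condition \ref{cond:acceleration}(i) delivers the mixing bound $|\Emb[Z_s\mid X_i^\beta[s],X_j^\beta[s]]| \le \widetilde\kappa(\beta\tau(s))\gamma_y(1+|X_i^\beta(s)|+|X_j^\beta(s)|)$. Partitioning $[0,T]$ by the random jump times of $(X_i^\beta,X_j^\beta)$ (finite in expectation by Condition \ref{cond:no_acceleration}(i)), the integrability $\int_0^\infty\widetilde\kappa(u)\,du<\infty$ yields $\Emb\int_0^T\widetilde\kappa(\beta\tau(s))\,ds = O(1/\beta)$; a Cauchy--Schwarz application that absorbs the $(1+|X_i^\beta|+|X_j^\beta|)$ factor via the second-moment bound of Condition \ref{cond:acceleration}(ii) then gives (II) $\le \kappa/\sqrt{\beta}$. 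Gronwall's inequality on (I)+(II) produces \eqref{eq:comparison_1}, and \eqref{eq:comparison_2} is immediate from $\dBL(\mu^\beta,\mu) \le \Emb\|X_i^\beta - X_i\|_{*,T}$.

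For part (b), under the uniform mixing assumption \eqref{eq:acc_asump_addition} the same averaging argument applied at a single time $t>0$ and for arbitrary $f\in\Bmb_1$ on $\Zmb^2$ gives $\lan f,\nu_i^\beta(t) - \mu^\beta_t\otimes Q(X_i^\beta(t),\cdot,\cdot)\ran\to 0$ in probability as $\beta\to\infty$; combined with part (a) and continuity of $(x,\mu)\mapsto\mu\otimes Q(x,\cdot,\cdot)$ this yields \eqref{eq:comparison_3}, and \eqref{eq:comparison_4} follows by bounded convergence. The main obstacle throughout is step (II): classical ergodicity does not apply directly because the edge chain is time-inhomogeneous through the slow drift of $(X_i^\beta,X_j^\beta)$, and the proof must combine the conditional-i.i.d.\ structure of Theorem \ref{thm:no_acceleration_1}(b) with the integrability of $\widetilde\kappa$ to absorb the inhomogeneity along each constancy interval and the second-moment control of Condition \ref{cond:acceleration}(ii) needed to match the stated $1/\sqrt{\beta}$ rate.
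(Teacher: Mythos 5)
Your argument is correct in substance, but it takes a genuinely different route from the paper's proof. The paper compares the two limiting systems through auxiliary discretized processes $X_i^{\Delta}$ and $\xi_{ij}^{\beta,\Delta}$ built on a deterministic grid of mesh $\Delta$, freezes the nodes on each grid interval, represents $\nu_i(t)$ as $\lim_{n\to\infty}\frac1n\sum_j\delta_{X_j(t)}\otimes Q(X_i(t),X_j(t),\cdot)$, controls the averaging error on each interval via Condition \ref{cond:acceleration}(i) together with an $L^2$ bound that exploits conditional independence across $j$, and finally optimizes $\Delta=\beta^{-1/2}$. You instead exploit that the nodes are $\Zmb$-valued pure-jump (hence piecewise constant) processes, so that conditionally on the node paths the edge is an exactly homogeneous frozen chain on each random constancy interval; you replace the empirical representation by the conditional-law identity of Theorem \ref{thm:no_acceleration_1}(b) (a single independent copy $j$ plus the tower property), and you integrate $\widetilde\kappa(\beta\tau(s))$ along the random partition generated by the node jump times, whose expected number is finite since the state-changing jump rate is at most $C_\gamma$. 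Both routes yield the $\beta^{-1/2}$ rate; yours avoids the auxiliary processes and the mesh optimization and is more elementary in this setting, while the paper's discretization is what allows the identical scheme to be reused in the prelimit Theorem \ref{thm:acc_LLN} (where $\nu_i^n$ is a genuine empirical measure, not a conditional law, so the over-$j$ $L^2$ argument is needed) and does not rely on piecewise constancy of the slow component.

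Two details should be patched, though neither is a fatal gap. First, your Cauchy--Schwarz step needs $\widetilde\kappa$ square-integrable along the path; this is harmless because the quantity bounded in Condition \ref{cond:acceleration}(i) is trivially at most $2\gamma_y\le 2\gamma_y(1+|x|+|\xtil|)$, so one may replace $\widetilde\kappa$ by $\widetilde\kappa\wedge 2$ without loss. Second, in part (b) the elapsed time $\tau(t)=\tau^{\beta}(t)$ depends on $\beta$, so before sending $\beta\to\infty$ you should record the uniform bound $\Pmb(\tau^{\beta}(t)<\delta)\le 2C_\gamma\delta$ for $\delta<t$ and split on $\{\tau^{\beta}(t)\ge\delta\}$, letting $\beta\to\infty$ and then $\delta\to0$. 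Finally, it is worth stating explicitly that the conditional-Markov description of the edge given the node paths uses that in the limiting system \eqref{eq:no_acceleration_system} the node rates are functionals of the node's own past only (via $\nu_i^\beta(s)=\Phi_s(X_i^\beta[s])$ from Theorem \ref{thm:no_acceleration_1}(b)) and that $N_{ij}$ is independent of the node drivers, so conditioning on the absence of node jumps on $(s-\tau(s),s)$ does not bias the edge's conditional evolution.
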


\begin{Remark}
	\label{rmk:comparison}
	Similar to Remark \ref{rmk:acc}(b), the assumption $\int_0^\infty \widetilde\kappa(t)\,dt < \infty$ in Condition \ref{cond:acceleration}(i) is needed to obtain the rate of convergence in Theorem \ref{thm:comparison}.
	If one is only interested in the convergence of $X_i^\beta, \mu^\beta, \nu_i^\beta$, then it would be sufficient (see Remark \ref{rmk:kappa-acc}) to assume that $\lim_{t \to \infty} \frac{1}{t} \int_0^t \widetilde\kappa(s)\,ds = 0$.
\end{Remark}

\subsection{Riccati equation for limiting marginal probabilities}
\label{sec:eg-acceleration}

In this section we will get a Riccati equation for the evolution of $\mu = \Lmc(X_i)$.

{For $t \in [0,T]$ and $k \in \Zmb$, let $$p_t(k) = \mu_t(\{k\}) = \Pmb(X_i(t)=k), \quad i \in \Nmb.$$
From \eqref{eq:acceleration_system} we have
\begin{equation*}
	\one_{\{X_i(t)=k\}} = \one_{\{X_i(0)=k\}} + \int_{\Xmb_t} \left( \one_{\{X_i(s-)+y=k\}} - \one_{\{X_i(s-)=k\}} \right) \one_{[0,\Gamma(y,X_i(s-),\nu_i(s-))]}(z) \, N_i(ds\,dy\,dz).
\end{equation*}
Taking expectations and differentiating with respect to $t$ gives
\begin{align*}
	\frac{d}{dt} p_t(k) & = -p_t(k) \sum_{y \in \Zmb} \int_{\Zmb^2} \rho(y) \gamma(y,k,\xtil,\xitil) \,\mu_t(d\xtil)\,Q(k,\xtil,d\xitil) \\
	& \quad + \sum_{y \in \Zmb} p_t(k-y) \int_{\Zmb^2} \rho(y) \gamma(y,k-y,\xtil,\xitil) \,\mu_t(d\xtil)\,Q(k-y,\xtil,d\xitil) \\
	& = -p_t(k) \sum_{y \in \Zmb} \sum_{\xtil \in \Zmb} \int_\Zmb \rho(y) \gamma(y,k,\xtil,\xitil) p_t(\xtil)\,Q(k,\xtil,d\xitil) \\
	& \quad + \sum_{y \in \Zmb} p_t(k-y) \sum_{\xtil \in \Zmb} \int_\Zmb \rho(y) \gamma(y,k-y,\xtil,\xitil) p_t(\xtil)\,Q(k-y,\xtil,d\xitil),
\end{align*}
where $\rho$ is the finite measure introduced in Section \ref{sec:notation}.}
This is an infinite dimensional Riccati equation.

\begin{Remark}
	\label{rmk:riccati}
	If, for simplicity, the system is finite dimensional with $\rho(y)=1$, {$\gamma(y,x,\xtil,\xitil) = \gamma_0(y,x,\xitil)$} does not depend on $\xtil$, and $Q(x,\xtil,d\xitil)=Q_0(d\xitil)$ does not depend on $x$ or $\xtil$, then the above Riccati equation reduces to the following finite dimensional linear ordinary differential equations:
	\begin{align*}
		\frac{d}{dt} p_t(k) &  = -p_t(k) \sum_{y \in \Zmb} \int_\Zmb \gamma_0(y,k,\xitil) \,Q_0(d\xitil) + \sum_{y \in \Zmb} p_t(k-y) \int_\Zmb \gamma_0(y,k-y,\xitil) \,Q_0(d\xitil). 
	\end{align*}
	Note that the special choice of {$\gamma$} and $Q$ does not mean that the particle system is trivial.
	Indeed, ${\gamma_0}(y,x,\xitil)$ still allows interaction as there is the dependence on relationship $\xitil$.
	Also note that although $Q_0(d\xitil)$ does not depend on $x$ or $\xtil$, one can still have general $\Gammatil(y,\xitil,x,\xtil)$ associated to $\xi_{ij}^n$ such that the stationary distribution is given by $Q_0(d\xitil)$.
\end{Remark}

\section{Fluctuations and central limit theorems}
\label{sec:CLT}

Finally we will study the fluctuations of empirical measures about the law of large numbers limit when $\beta(n) \to \beta \in [0,\infty)$.
For simplicity, we assume $\beta(n)=\beta \in [0, \infty)$.
In addition to Condition \ref{cond:no_acceleration}, we make the following assumption on $\Gamma$.

\begin{Condition}
	\phantomsection
	\label{cond:CLT}
	There exists $\varepsilon \in (0,1]$ such that $\varepsilon \le \gamma(y,x,\xtil,\xitil) \le \frac{1}{\varepsilon}$ for all $y,x,\xtil,\xitil \in \Zmb$.
\end{Condition}

The fluctuations will be characterized by CLT in Theorems \ref{thm:CLT_nu} and \ref{thm:CLT_mu} in Section \ref{sec:CLT-sub}.
Variances in these CLT will be expressed in terms of norms of certain integral operators defined in Section \ref{sec:integral_operators} using canonical spaces and processes introduced in Section \ref{sec:canonical}.

\subsection{Canonical processes}
\label{sec:canonical}

We first introduce the following canonical spaces and stochastic processes.
Let $\Omega_v = \Mmc_T \times \Dmb([0,T]:\Zmb)^2$, $\Omega_e = \Dmb([0,T]:\Zmb)$, and $\Omega_0 = \Dmb([0,T]:\Zmb)$.
Define for $n \in \Nmb$ the probability measure $P^n$ on $\Omega_n := \Omega_0 \times \Omega_v^n \times \Omega_e^{n(n-1)}$ by
\begin{equation*}
	P^n := \Lmc \left( X_1, (N_1,X_1,\xi_{11}), (N_2,X_2,\xi_{12}), \dotsc, (N_n,X_n,\xi_{1n}), \{\xi_{ij}:i=2,\dotsc,n, j \in [n] \} \right).
\end{equation*}
For $\omega = (\omega_0, \omega_1, \omega_2, \dotsc, \omega_n, \omegabar) \in \Omega_n$ with $\omega_i=(\omega_{ik} : k=1,2,3)$ for $i=1,\dotsc,n$ and $\omegabar=(\omegabar_{ij} : i=2,\dotsc,n, j \in [n])$, let
$$V_0(\omega):=\omega_0, \quad V_i(\omega):=\omega_i=(\omega_{i1},\omega_{i2},\omega_{i3}), \: i \in [n],$$
and, abusing notation, write
\begin{align*}
	V_0 := X_1, \quad V_i := (N_i,X_i,\xi_{1i}), \: i \in [n], \quad \xi_{ij}(\omega) := \omegabar_{ij}, \: i=2,\dotsc,n, \: j \in [n].
\end{align*}
Note that such an abuse of notation just says that the distribution of the canonical processes $(N_i,X_i,\xi_{ij} : i,j=1,\dotsc,n)$ under $P^n$ is the same as that of processes $(N_i,X_i,\xi_{ij} : i,j=1,\dotsc,n)$ in \eqref{eq:no_acceleration_system}.
Also define the canonical processes $V_* := (N_*,X_*,\xi_*)$ on $\Omega_v$ by
\begin{equation*}
	V_*(\omega) := (N_*(\omega),X_*(\omega),\xi_*(\omega)) := (\omega_1,\omega_2,\omega_3), \quad \omega=(\omega_1,\omega_2,\omega_3) \in \Omega_v.
\end{equation*}
Define the compensated PRM $\Ncompensated_*$ on $\Omega_v$ by 
$$\Ncompensated_*(\omega)(ds\,dy\,dz) := N_*(\omega)(ds\,dy\,dz) - ds \times \measurea(dy) \times dz.$$

\subsection{Some integral operators}
\label{sec:integral_operators}

We will need some functions for stating our central limit theorem.
Let $$\alpha(x,\cdot) := \Lmc((N_2,X_2,\xi_{12}) \in \cdot | V_0=x)$$ and $$\theta(t,x[t],\xtil[t]) := \Lmc(\xi_{12}(t) \,|\, X_1[t]=x[t],X_2[t]=\xtil[t])$$ be the corresponding regular conditional probabilities, for $x,\xtil \in \Dmb([0,T]:\Zmb)$.
Let $$P_0 := \Lmc(X_1), \quad \Xi := \Lmc(N_2,X_2,\xi_{12}).$$
Note that $P_0$ is just $\mu$ but we write it in this way to emphasize its role as a common factor.
Recall $\nu_i(t) = \Lmc((X_j(t),\xi_{ij}(t)) \,|\, X_i[t])$ and we rewrite it as $\nu(t,X_i[t])$ to emphasize its dependence on $X_i[t]$.
Define the function $h \colon \Omega_v \times \Omega_v \to \Rmb$ ($\Xi \times \Xi$ a.s.) by
\begin{align*}
	h(\omega_1,\omega_2) & := \int_{\Xmb_T} \one_{[0,\Gamma(y,X_*(\omega_1)(s-),\nu(s-,X_*(\omega_1)[s-]))]}(z) \\
	& \cdot \frac{\lan \gammabar_{s-,y}(X_*(\omega_1)[s-],X_*(\omega_2)(s-),\cdot), \theta(s-,X_*(\omega_1)[s-],X_*(\omega_2)[s-]) \ran }{\Gamma(y,X_*(\omega_1)(s-),\nu(s-,X_*(\omega_1)[s-]))} \,\Ncompensated_*(\omega_1)(ds\,dy\,dz),
\end{align*}
where
\begin{equation*}
	\gammabar_{t,y}(x_1,x_2,\xi) := \gamma(y,x_1(t),x_2,\xi) - \lan \gamma(y,x_1(t),\cdot,\cdot), \nu(t,x_1[t]) \ran,
\end{equation*}
for $t \in [0,T]$, $x_1 \in \Dmb([0,t]:\Zmb)$ and $y,x_2,\xi \in \Zmb$.
Fix $\omega_0 \in \Omega_0$ and consider the Hilbert space $\Hmc_{\omega_0} := L^2(\Omega_v,\alpha(\omega_0,\cdot))$.
Define the integral operator $A_{\omega_0}$ on $\Hmc_{\omega_0}$ by
\begin{equation*}
	A_{\omega_0} g(\omega_2) = \int_{\Omega_v} g(\omega_1) h(\omega_1,\omega_2) \, \alpha(\omega_0,d\omega_1), \quad g \in \Hmc_{\omega_0}, \omega_2 \in \Omega_v.
\end{equation*}
Denote by $I$ the identity operator.

\subsection{Central limit theorems}
\label{sec:CLT-sub}

We denote by $\Amc$ the collection of all measurable maps $\varphi \colon \Dmb([0,T]:\Zmb)^2 \to \Rmb$ such that $\varphi(X_*,\xi_*) \in L^2(\Omega_v,\alpha(\omega_0,\cdot))$ for $P_0$ a.e.\ $\omega_0 \in \Omega_0$.
For $\varphi \in \Amc$ and $\omega_0 \in \Omega_0$, let
\begin{equation*}
	m_\varphi(\omega_0) := \int_{\Omega_v} \varphi(X_*(\omega_1),\xi_*(\omega_1)) \, \alpha(\omega_0,d\omega_1), \quad \Phi_{\omega_0}(\omega) := \varphi(X_*(\omega),\xi_*(\omega)) - m_\varphi(\omega_0), \quad \omega \in \Omega_v.
\end{equation*}
For $\omega_0 \in \Omega_0$ and $\varphi \in \Amc$, define
\begin{equation*}
	\sigma^\varphi_{\omega_0} := \|(I-A_{\omega_0})^{-1} \Phi_{\omega_0}\|_{\Hmc_{\omega_0}},
\end{equation*}
and denote by $\pi_{\omega_0}^\varphi$ the normal distribution with mean $0$ and standard deviation $\sigma_{\omega_0}^\varphi$.
Here the operator $I-A_{\omega_0}$ is invertible by Lemma \ref{lem:A}.
Define $\pi^\varphi \in \Pmc(\Rmb)$ by
\begin{equation*}
	\pi^\varphi := \int_{\Omega_0} \pi_{\omega_0}^\varphi \, P_0(d\omega_0).
\end{equation*}
Finally let
\begin{equation*}
	\eta^n(\varphi) := \sqrt{n} \lan \varphi, \nu_1^n-\nu_1 \ran = \sqrt{n} \left( \frac{1}{n} \sum_{j=1}^n \varphi(X_j^n,\xi_{ij}^n) - m_\varphi(X_1^n) \right).
\end{equation*}

The following is the CLT for the empirical measure $\nu_1^n$ of neighboring nodes and edges of node $1$.
The proof is given in Section \ref{sec:pf-CLT}.

\begin{Theorem}
	\label{thm:CLT_nu}
	Suppose that Conditions \ref{cond:no_acceleration} and \ref{cond:CLT} hold.
	Then, for all $\varphi \in \Amc$, $\Lmc(\eta^n(\varphi)) \to \pi^\varphi$ weakly as $n \to \infty$.
\end{Theorem}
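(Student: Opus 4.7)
The plan is to follow the Sznitman--Shiga--Tanaka change-of-measure strategy \cite{Sznitman1984,ShigaTanaka1985}, adapted to accommodate the conditional McKean--Vlasov limit from Theorem \ref{thm:no_acceleration_1}. First I would introduce a reference probability $\Qmb^n$ on $\Omega_n$ under which, conditional on $V_0 = X_1$, the canonical tuples $V_j = (N_j,X_j,\xi_{1j})$, $j = 2,\dotsc,n$, are i.i.d.\ with common law $\alpha(X_1,\cdot)$. Informally each particle under $\Qmb^n$ follows the limiting dynamics and interacts through the limiting kernel $\nu(s,X_j[s])$ in place of the empirical $\nu_j^n(s)$. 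By Girsanov's theorem for Poisson random measures, the density $D^n := dP^n/d\Qmb^n$ factors into a product of Dol\'eans--Dade stochastic exponentials driven by the compensated PRMs $\Ncompensated_j$, with integrands given by the ratio of empirical to limiting intensities minus one. Condition \ref{cond:CLT} keeps these integrands in a fixed compact subset of $(-1,\infty)$ and thereby supplies a uniform $L^2(\Qmb^n)$ bound on $D^n$ that legitimizes the subsequent manipulations.

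Because $\Emb_{P^n}[f(\eta^n(\varphi))] = \Emb_{\Qmb^n}[f(\eta^n(\varphi))\, D^n]$, it suffices to identify the joint weak limit of $(\eta^n(\varphi),D^n)$ under $\Qmb^n$ conditional on $X_1$. Conditional on $X_1 = \omega_0$, the representation
\[
\eta^n(\varphi) = \frac{1}{\sqrt{n}}\sum_{j=2}^n \Phi_{\omega_0}(V_j) + O_{L^2}(n^{-1/2})
\]
is a classical sum of i.i.d.\ variables with mean zero and variance $\|\Phi_{\omega_0}\|_{\Hmc_{\omega_0}}^2$, to which the Lindeberg CLT applies. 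For $\log D^n$ I would expand $\log(1+u) = u - \tfrac{u^2}{2} + O(u^3)$ inside each stochastic exponential and invoke the LLN/POC of Theorem \ref{thm:no_acceleration_2} together with exchangeability of the $V_j$; the dominant term is a sum of stochastic integrals against the compensated PRMs whose integrands are exactly the kernel $h(\cdot,\cdot)$ from Section \ref{sec:integral_operators} sampled along pairs $(V_j,V_k)$. A multidimensional martingale CLT then yields joint conditional weak convergence $(\eta^n(\varphi),\log D^n) \Rightarrow (G,H)$ to a Gaussian pair whose covariance structure is built from inner products involving $\Phi_{\omega_0}$ and $A_{\omega_0}\Phi_{\omega_0}$ in $\Hmc_{\omega_0}$.

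Heuristically the joint limit may be recast as the linear fixed-point relation $\eta^n(\varphi) \approx G + A_{\omega_0}\eta^n(\varphi)$: the bare i.i.d.\ fluctuation $G$ is amplified by the particle-to-particle coupling encoded in the operator $A_{\omega_0}$. Formally solving this relation and computing the resulting conditional variance gives
\[
\eta^n(\varphi) \mid X_1 = \omega_0 \;\Rightarrow\; \Nmc\bigl(0, \|(I - A_{\omega_0})^{-1}\Phi_{\omega_0}\|_{\Hmc_{\omega_0}}^2\bigr),
\]
where invertibility of $I - A_{\omega_0}$ on $\Hmc_{\omega_0}$ is provided by Lemma \ref{lem:A} and the Neumann series $(I - A_{\omega_0})^{-1} = \sum_{k \ge 0} A_{\omega_0}^k$ arises from iterating the Girsanov correction. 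Integrating the resulting conditional Gaussian law over $P_0(d\omega_0) = \Lmc(X_1)$ then produces the Gaussian mixture $\pi^\varphi$ and completes the identification of the limit.

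The principal obstacle is the joint weak convergence of $(\eta^n(\varphi),\log D^n)$ under $\Qmb^n$. Unlike in the single-color setting of \cite{Sznitman1984,ShigaTanaka1985} and even the two-color \Erdos setting of \cite{BhamidiBudhirajaWu2019weakly}, the Girsanov correction here is driven by the edge variables $\xi_{1j}$, whose conditional distributions depend jointly on both endpoints $X_1$ and $X_j$. This bilinearity forces $A_{\omega_0}$ to be a genuine integral operator rather than multiplication by a scalar, and the remainder terms in the expansion of $\log D^n$ contain $U$-statistic-type contributions coupling distinct particles through $h$; showing that these vanish in $L^1(\Qmb^n)$ uniformly in $n$ will require careful exchangeability and uniform integrability arguments in which the two-sided bound on $\gamma$ from Condition \ref{cond:CLT} and the conditional independence from Theorem \ref{thm:no_acceleration_1}(b) play essential roles.
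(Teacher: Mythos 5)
Your overall architecture — a Girsanov change of measure between the interacting system and the (conditionally) i.i.d.\ limiting system, a Taylor expansion of the log-density, identification of the limit conditionally on $X_1$, and finally mixing over $P_0=\Lmc(X_1)$ — is the same as the paper's (with the roles of the two measures swapped in notation). The genuine gap is in your central step: you claim that, conditionally on $X_1=\omega_0$, a multidimensional martingale CLT gives joint weak convergence of $(\eta^n(\varphi),\log D^n)$ to a \emph{Gaussian} pair. This is not true, and it is exactly the point where $(I-A_{\omega_0})^{-1}$ has to come from. After the expansion $\log(1+u)=u-\tfrac{u^2}{2}+O(u^3)$, the dominant pair-interaction contribution to $\log D^n$ is a \emph{degenerate} order-two $U$-statistic in the variables $V_j=(N_j,X_j,\xi_{1j})$ with kernel built from $h$; normalized by $n^{-1}$ (not $n^{-1/2}$), such statistics converge to a second-order multiple Wiener integral, i.e.\ an element of the second Wiener chaos, not a Gaussian. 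If the limit of $(\eta^n(\varphi),\log D^n)$ really were jointly Gaussian, exponential tilting by the density would only shift the mean of the limiting law of $\eta^n(\varphi)$, leaving conditional variance $\|\Phi_{\omega_0}\|^2_{\Hmc_{\omega_0}}$; the amplification to $\|(I-A_{\omega_0})^{-1}\Phi_{\omega_0}\|^2_{\Hmc_{\omega_0}}$ is produced precisely by the quadratic (second-chaos) part of the tilt, evaluated through the Shiga--Tanaka identity for $\Emb\exp\bigl(\sqrt{-1}\,I_1(\Phi)+\tfrac12 I_2(F)\bigr)$ together with the invertibility of $I-A_{\omega_0}$ (Lemma \ref{lem:A}). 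Your subsequent ``fixed-point relation'' $\eta\approx G+A_{\omega_0}\eta$ is a heuristic that is not implied by (and in fact contradicts) the Gaussian-pair claim, so the variance identification is unproven. The paper fills this step with the Dynkin--Mandelbaum theorem on symmetric statistics (Lemma \ref{thm:Dynkin}), applied conditionally on $X_1$, and crucially \emph{jointly} with an additional independent Gaussian $Z$ arising from the fluctuations of the edges $\xi_{ij}$ around their conditional law $\theta_{ij}$ given both endpoints (the term $U^n$ in Lemma \ref{lem:Un_joint}); your expansion never isolates this piece, which must be shown to decouple (it ultimately integrates out via $\Emb e^{Z-\sigma^2/2}=1$ but cannot simply be ignored).

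Two further points. First, you assert a uniform $L^2(\Qmb^n)$ bound on $D^n$ as the source of uniform integrability; Condition \ref{cond:CLT} bounds the integrands but does not obviously yield such an exponential-moment bound, and the paper does not need it: uniform integrability of the densities is obtained from Scheff\'e's lemma, using $\Emb_{P^n} e^{J^n(T)}=1$, the weak convergence $J^n(T)\Rightarrow J$, and $\Emb e^{J}=1$ for the limit. Second, the remainder analysis you defer is where the real work lies: the cross terms coupling distinct particles through $h$ require the careful conditioning arguments of Lemmas \ref{lem:Jn3}, \ref{lem:Jn2_prep}, \ref{lem:Jn2} and \ref{lem:Jn1} (exploiting the conditional independence structure of Theorem \ref{thm:no_acceleration_1}(b)) before the decomposition into $Z$, $I_2^{X_1}(F)$ and the trace terms is available. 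As written, your proposal identifies the correct answer but not by a correct mechanism.
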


We note that the limit of the fluctuation of $\nu_1^n$ is a Gaussian mixture.
This is due to the conditional independence of $\{\xi_{ij} : j \ne i\}$ given $X_i$, so that $X_i$ serves as a common noise which would not be averaged out.
{Unlike in \cite{BhamidiBudhirajaWu2019weakly} where the fluctuations of node states are analyzed, when one restricts to two colors, such a common noise feature still exists and Theorem \ref{thm:CLT_nu} could be understood as a CLT for the node states and connectivity, jointly, from the viewpoint of node $1$.}

{The CLT limit} could be written as a single Gaussian random variable if one is interested in the fluctuation of $\mu^n$.
To be precise, let $\Amc_x := L^2(\Dmb([0,T]:\Zmb),\mu)$ and $\Hmc = L^2(\Omega_v,\Xi)$.
Define the integral operator $A$ on $\Hmc$ by
\begin{equation*}
	A g(\omega_2) = \int_{\Omega_v} g(\omega_1) h(\omega_1,\omega_2) \, \Xi(d\omega_1), \quad g \in \Hmc, \omega_2 \in \Omega_v.
\end{equation*}
For $\varphi \in \Amc_x$, let
\begin{align*}
	\Phi(\omega) & := \varphi(X_*(\omega)) - \int_{\Omega_v} \varphi(X_*(\omega_1))\,\Xi(d\omega_1), \quad \omega \in \Omega_v,
\end{align*}
and
\begin{equation*}
	\eta_x^n(\varphi) := \sqrt{n} \lan \varphi, \mu^n-\mu \ran = \sqrt{n} \left( \frac{1}{n} \sum_{j=1}^n \varphi(X_j^n) - \Emb \varphi(X_1) \right).
\end{equation*}

The following is the CLT for empirical measure $\mu^n$ of all nodes in the system.
The proof is given in Section \ref{sec:pf-CLT}.

\begin{Theorem}
	\label{thm:CLT_mu}
	Suppose that Conditions \ref{cond:no_acceleration} and \ref{cond:CLT} hold.
	Then $\{\eta^n_x(\varphi) : \varphi \in \Amc_x\}$ converges as $n \to \infty$ to a mean $0$ Gaussian field $\{\eta_x(\varphi) : \varphi \in \Amc_x\}$ in the sense of convergence of finite dimensional distributions, where for $\varphi,\psi \in \Amc_x$,
	\begin{equation*}
		\Emb[\eta_x(\varphi)\eta_x(\psi)] = \lan (I-A)^{-1} \Phi, (I-A)^{-1} \Psi \ran_{\Hmc},
	\end{equation*}
	where $\Psi := \psi(X_*) - \Emb \psi(X_1) \in \Hmc$.
\end{Theorem}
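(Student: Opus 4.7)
The plan is to reduce Theorem \ref{thm:CLT_mu} to Theorem \ref{thm:CLT_nu} by extending each $\varphi \in \Amc_x$ trivially in the edge variable. For $\varphi \in \Amc_x$ define $\tilde\varphi \in \Amc$ by $\tilde\varphi(x,\xi) := \varphi(x)$. By Theorem \ref{thm:no_acceleration_1}, $X_1$ and $X_2$ are independent, so $m_{\tilde\varphi}(\omega_0) = \int \varphi(X_*(\omega_1))\,\alpha(\omega_0,d\omega_1) = \Emb[\varphi(X_2) \mid X_1 = \omega_0] = \langle \varphi, \mu\rangle$ is constant in $\omega_0$. Consequently $\eta^n(\tilde\varphi) = \eta^n_x(\varphi)$ as random variables, and Theorem \ref{thm:CLT_nu} yields $\eta^n_x(\varphi) \Rightarrow \pi^{\tilde\varphi}$ in distribution for each fixed $\varphi \in \Amc_x$.

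The main step is to show that $\pi^{\tilde\varphi}$ is actually a single centered Gaussian with variance $\|(I-A)^{-1}\Phi\|_\Hmc^2$, rather than a proper mixture. For $\tilde\varphi$ independent of $\xi$, one checks that $\Phi_{\omega_0}(\omega) = \varphi(X_*(\omega)) - \langle\varphi,\mu\rangle$ does not depend on $\omega_0$ and coincides with the $\Phi$ appearing in the statement of Theorem \ref{thm:CLT_mu}. The crucial structural observation is that the kernel $h(\omega_1,\omega_2)$ depends on $\omega_1$ only through $(N_*(\omega_1), X_*(\omega_1))$---the $\xi_*(\omega_1)$-coordinate having been averaged out against $\theta(s-, X_*(\omega_1)[s-], X_*(\omega_2)[s-])$---and on $\omega_2$ only through $X_*(\omega_2)$. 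Combined with Theorem \ref{thm:no_acceleration_1}, which implies that the $(N_*, X_*)$-marginal of $\alpha(\omega_0,\cdot)$ equals $\Lmc(N_*, X_*)$ for every $\omega_0$, this shows that $A_{\omega_0}$ restricted to the subspace of $X_*$-measurable functions does not depend on $\omega_0$ and agrees there with the analogous restriction of the global operator $A$ on $\Hmc$. Since both $\langle\cdot,\cdot\rangle_{\Hmc_{\omega_0}}$ and $\langle\cdot,\cdot\rangle_\Hmc$ reduce to $\int fg\,d\mu$ on $X_*$-measurable functions, and $\Phi$ itself is $X_*$-measurable, one obtains $(I-A_{\omega_0})^{-1}\Phi = (I-A)^{-1}\Phi$ with identical $L^2$-norm; hence $\sigma^{\tilde\varphi}_{\omega_0}$ is $P_0$-a.s.\ constant and $\pi^{\tilde\varphi} = N(0, \|(I-A)^{-1}\Phi\|_\Hmc^2)$.

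Finite-dimensional convergence then follows from the Cram\'er-Wold device and linearity of $\eta^n_x$ in $\varphi$. For $\varphi_1,\dotsc,\varphi_k \in \Amc_x$ and $c_1,\dotsc,c_k \in \Rmb$, we have $\sum_\ell c_\ell \eta^n_x(\varphi_\ell) = \eta^n_x\bigl(\sum_\ell c_\ell \varphi_\ell\bigr)$, and the marginal CLT just established applied to $\sum_\ell c_\ell \varphi_\ell \in \Amc_x$ gives convergence to $N\bigl(0, \|(I-A)^{-1}\sum_\ell c_\ell \Phi_\ell\|_\Hmc^2\bigr)$; expanding by bilinearity yields the claimed covariance $\sum_{\ell,\ell'} c_\ell c_{\ell'} \langle(I-A)^{-1}\Phi_\ell, (I-A)^{-1}\Phi_{\ell'}\rangle_\Hmc$. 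The main obstacle is the structural argument in the previous paragraph: one must carefully verify that $h$ does not involve $\xi_*(\omega_1)$---this is precisely the mechanism by which the edge-induced common-noise mixing that appears in Theorem \ref{thm:CLT_nu} disappears at the level of node empirical measures---and then invoke the independence of $(N_*, X_*)$ from $X_1$ to conclude that the operator $A_{\omega_0}$ is effectively $\omega_0$-free on the relevant subspace. Once these points are in place, the collapse of the Gaussian mixture and the identification of its variance are short Hilbert-space computations.
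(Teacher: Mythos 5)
Your proposal is correct and follows essentially the same route as the paper: view $\varphi\in\Amc_x$ as an element of $\Amc$ constant in the edge variable, note $\Phi_{\omega_0}=\Phi$ and that $h$ does not involve $\xi_*(\omega_1)$ or $\xi_*(\omega_2)$, so that on $X_*$-measurable functions $A_{\omega_0}$ coincides with $A$ and the conditional variance $\sigma^{\varphi}_{\omega_0}$ is $P_0$-a.s.\ constant, collapsing the Gaussian mixture of Theorem \ref{thm:CLT_nu} to a single Gaussian with variance $\|(I-A)^{-1}\Phi\|_{\Hmc}^2$. Your explicit Cram\'er--Wold/bilinearity step for the finite-dimensional distributions is exactly what the paper uses implicitly, so there is no substantive difference.
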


\subsection{An example with explicit variance}
\label{sec:eg-CLT}

In this section we provide an example where one can get explicit variance from Theorem \ref{thm:CLT_mu} for some functionals $\varphi \in \Amc_x$.

Suppose Conditions \ref{cond:no_acceleration} and \ref{cond:CLT} hold, and $$\gamma(y,x,\xtil,\xitil)=c_0(y)b_0(x)+c_1(y)b_1(\xtil)+c_2(y)b_2(\xitil)+c_3(y),$$
for some bounded functions $b_0,b_1,b_2,c_0,c_1,c_2,c_3 \colon \Zmb \to \Rmb$.
Assume that $\measurea(\{\cdot\}), \Gammatil(\cdot,\xitil,x,\xtil)$ are even functions from $\Zmb$ to $\Rmb_+$, $b_1,b_2$ are odd functions, and $c_0,b_0,c_3$ are even functions.
Also suppose $\beta=1$, $X_i(0)=0$ and $\xi_{ij}(0)=0$.

Consider the system
\begin{align}
	X_i(t) & = \int_{\Xmb_t} y\one_{[0,c_0(y)b_0(X_i(s-))+c_3(y)]}(z) \, N_i(ds\,dy\,dz), \notag \\
	\xi_{ij}(t) & = \int_{\Xmb_t} y\one_{[0,\beta\Gammatil(y,\xi_{ij}(s-),X_i(s-),X_j(s-))]}(z) \, N_{ij}(ds\,dy\,dz), \label{eq:CLT-eg0} \\
	\nu_i(t) & = \lim_{n \to \infty} \frac{1}{n} \sum_{j=1}^n \delta_{(X_j(t),\xi_{ij}(t))} =\Lmc((X_k(t),\xi_{ik}(t)) \,|\, X_i[t]), \quad k \ne i. \notag
\end{align}
From the even function property of $\measurea(\{\cdot\})$, $c_0$, $b_0$, $c_3$ and $\Gammatil(\cdot,\xitil,x,\xtil)$ we see that $\Lmc(X_i(t))=\Lmc(-X_i(t))$ and $\Lmc(\xi_{ij}(t)|X_i[t],X_j[t])=\Lmc(-\xi_{ij}(t)|X_i[t],X_j[t])$.
It then follows from the odd function property of $b_1,b_2$ that
\begin{equation}
	\label{eq:CLT-eg1}
	\Gamma(y,X_i(s),\nu_i(s))=c_0(y)b_0(X_i(s))+c_3(y)
\end{equation} 
and 
\begin{equation}
	\label{eq:CLT-eg2}
	\lan \gammabar_{s,y}(X_*(\omega_1)[s],X_*(\omega_2)(s),\cdot), \theta(s,X_*(\omega_1)[s],X_*(\omega_2)[s]) \ran=c_1(y)b_1(X_*(\omega_2)(s))
\end{equation}
for $\omega_1,\omega_2 \in \Omega_v$.
The equality \eqref{eq:CLT-eg1} and Theorem \ref{thm:no_acceleration_1}(b) imply that \eqref{eq:CLT-eg0} is indeed the limiting system \eqref{eq:no_acceleration_system}.

Now consider $\varphi \in \Amc_x$ defined by $$\varphi(x) = x_T - \int_0^T b_1(x_s)\,ds \int_\Zmb yc_1(y)\,\measurea(dy), \quad x \in \Dmb([0,T]:\Zmb).$$
For this example we can explicitly describe the
asymptotic distribution of
$$\eta_x^n(\varphi) = \sqrt{n} \lan \varphi, \mu^n-\mu \ran = \frac{1}{\sqrt{n}} \sum_{j=1}^n \left( X_j^n(T) - \int_0^T b_1(X_j^n(s))\,ds \int_\Zmb yc_1(y)\,\measurea(dy) \right).$$
Following the notation above Theorem \ref{thm:CLT_mu}, we have
\begin{align*}
	\Phi(\omega) & = X_{*,T}(\omega) - \int_0^T b_1(X_{*,s}(\omega))\,ds \int_\Zmb yc_1(y)\,\measurea(dy), \quad \omega \in \Omega_v,
\end{align*}
and from \eqref{eq:CLT-eg2} we have
\begin{align*}
	h(\omega_1,\omega_2) & := \int_{\Xmb_T} \one_{[0,c_0(y)b_0(X_{*,s}(\omega_1))+c_3(y)]}(z) \frac{c_1(y)b_1(X_{*,s}(\omega_2))}{c_0(y)b_0(X_{*,s}(\omega_1))+c_3(y)} \,\Ncompensated_*(\omega_1)(ds\,dy\,dz)
\end{align*}
for $\omega_1,\omega_2 \in \Omega_v$.
The special form of $\varphi$ allows us to determine $(I-A)^{-1}\Phi$.
Indeed, let
\begin{align*}
	\Psi(\omega) & = \int_{\Xmb_T} y\one_{[0,c_0(y)b_0(X_{*,s}(\omega))+c_3(y)]}(z) \, \Ncompensated_*(\omega)(ds\,dy\,dz) \\
	& = \int_{\Xmb_T} y\one_{[0,c_0(y)b_0(X_{*,s}(\omega))+c_3(y)]}(z) \, N_*(\omega)(ds\,dy\,dz) = X_{*,T}(\omega), \quad \Xi\text{-a.s.\ } \omega \in \Omega_v,
\end{align*}
where the second line uses the even function property of $c_0$, $c_3$ and $\measurea(\{\cdot\})$.
Note that
\begin{align*}
	A \Psi(\omega) 
	& = \int_{[0,T]\times\Zmb} yc_1(y)b_1(X_{*,s}(\omega)) \,ds\,\measurea(dy), \\
	(I-A)\Psi(\omega) 
	& = X_{*,T}(\omega) - \int_0^T b_1(X_{*,s}(\omega))\,ds \int_\Zmb yc_1(y)\,\measurea(dy) = \Phi(\omega), \quad \Xi\text{-a.s.\ } \omega \in \Omega_v.
\end{align*}
Therefore $(I-A)^{-1}\Phi = \Psi$ and from Theorem \ref{thm:CLT_mu} we have that $\eta_x^n(\varphi)$ converges in distribution to a mean zero Gaussian random variable with variance
\begin{equation*}
	\sigma_\varphi^2 = \|\Psi\|_\Hmc^2 = \Emb \left[ \left( X_i(T) \right)^2 \right].
\end{equation*}

\begin{Remark}
	\begin{enumerate}[(a)]
	\item 
		We note that the variance is not simply the variance of $X_i(T) - \int_0^T b_1(X_i(s))\,ds \int_\Zmb yc_1(y)\,\measurea(dy)$, since the propagation of chaos property in Theorem \ref{thm:no_acceleration_2}(b) only guarantees asymptotic independence for finite collection of $X_i^n$.
	\item 
		It is straightforward to extend above calculation to functionals that depend on states at finitely many time instants.
		Indeed, taking $$\varphi(x) = \sum_{k=1}^m a_k \left( x_{t_k} - \int_0^{t_k} b_1(x_s)\,ds \int_\Zmb yc_1(y)\,\measurea(dy) \right), \quad x \in \Dmb([0,T]:\Zmb),$$ for some $0 \le t_1<\dotsb<t_m \le T$, $a_1,\dotsc,a_m \in \Rmb$, $m \in \Nmb$, one has $$\Phi(\omega) = \sum_{k=1}^m a_k \left( X_{*,t_k}(\omega) - \int_0^{t_k} b_1(X_{*,s}(\omega))\,ds \int_\Zmb yc_1(y)\,\measurea(dy)\right)$$ and $(I-A)^{-1}\Phi = \Psi$, where
		$\Psi(\omega) = \sum_{k=1}^m a_k X_{*,t_k}(\omega)$ for $\Xi-$a.s.\ $\omega \in \Omega_v$.
		It then follows from Theorem \ref{thm:CLT_mu} that $\eta_x^n(\varphi) = \sqrt{n} \lan \varphi, \mu^n-\mu \ran$
		converges in distribution to a mean zero Gaussian random variable with variance
		\begin{equation*}
			\sigma_\varphi^2 = \|\Psi\|_\Hmc^2 = \Emb \left[ \left( \sum_{k=1}^m a_kX_i(t_k) \right)^2 \right].
		\end{equation*}
	\end{enumerate}
\end{Remark}

\section{Proofs of laws of large numbers and propagation of chaos results}
\label{sec:pf-LLN}

We first state an elementary result on (conditionally) i.i.d.\ random variables. The proof is omitted.
\begin{Lemma}
	\label{lem:iid-moment}
	Let $\{Y_i : i=1,\dotsc,n\}$ be a collection of $\Smb$-valued random variables defined on some probability space $(\Omega,\Fmc,\Pmb)$, where $\Smb$ is some Polish space.
	Suppose $\{Y_i : i=1,\dotsc,n\}$ are conditionally i.i.d.\ given some $\sigma$-field $\Gmc \subset \Fmc$.
	Then for each $k \in \Nmb$, there exists $a_k \in (0,\infty)$ such that
	\begin{equation*}
		\sup_{\|f\|_\infty \le 1} \Emb \left| \frac{1}{n} \sum_{i=1}^n \left( f(Y_i) - \Emb[f(Y_i)|\Gmc] \right) \right|^k \le \frac{a_k}{n^{k/2}}.
	\end{equation*} 
\end{Lemma}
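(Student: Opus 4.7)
The plan is to reduce, by conditioning on $\Gmc$, to the classical moment bound for centered i.i.d.\ bounded random variables, and then take expectation to remove the conditioning. First I would fix a measurable $f$ with $\|f\|_\infty \le 1$ and define $Z_i := f(Y_i) - \Emb[f(Y_i) \mid \Gmc]$ for $i=1,\dotsc,n$. By the conditional i.i.d.\ hypothesis, given $\Gmc$ the $Z_i$ are i.i.d.\ with $\Emb[Z_i \mid \Gmc] = 0$ and $|Z_i| \le 2\|f\|_\infty \le 2$ almost surely. Hence it will suffice to prove the almost-sure conditional bound
\begin{equation*}
    \Emb\Big[ \big| \tfrac{1}{n} \textstyle\sum_{i=1}^n Z_i \big|^k \,\Big|\, \Gmc \Big] \le \frac{a_k}{n^{k/2}}
\end{equation*}
for a constant $a_k$ depending only on $k$; the lemma then follows by integration over $\Gmc$, since the bound depends on $f$ only through $\|f\|_\infty \le 1$.

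For the main step I would treat even $k=2m$ by expanding
\begin{equation*}
    \Emb\Big[ \big(\textstyle\sum_{i=1}^n Z_i\big)^{2m} \,\Big|\, \Gmc \Big] = \sum_{i_1,\dotsc,i_{2m}=1}^n \Emb[Z_{i_1}\dotsb Z_{i_{2m}} \mid \Gmc].
\end{equation*}
By conditional independence combined with $\Emb[Z_i \mid \Gmc]=0$, any term in which some index appears exactly once vanishes, so only multi-indices $(i_1,\dotsc,i_{2m})$ in which every distinct value appears at least twice contribute; a standard counting argument bounds the number of such multi-indices by $c_m n^m$. Each surviving summand is at most $2^{2m}$ in absolute value, which gives $\Emb[|\sum_i Z_i|^{2m}\mid\Gmc] \le C_{2m} n^m$ almost surely, equivalent to the desired $n^{-m}$ rate after dividing by $n^{2m}$. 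For odd $k$ I would apply conditional H\"older (or Lyapunov), namely $\Emb[|\cdot|^k \mid \Gmc] \le (\Emb[|\cdot|^{2k} \mid \Gmc])^{1/2}$, reducing to the even case with parameter $2k$ and preserving the rate $n^{-k/2}$.

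The only substantive content here is the combinatorial counting step for even moments, which is the most elementary form of the Marcinkiewicz--Zygmund / Rosenthal moment inequality; everything else is routine conditional bookkeeping and a single application of H\"older. Accordingly, I do not expect any genuine obstacle; the proof is essentially a one-line reduction to a textbook fact, with the uniformity over $\|f\|_\infty \le 1$ coming for free from the fact that only the uniform bound $|Z_i| \le 2$ is used.
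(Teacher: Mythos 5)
Your argument is correct: conditioning on $\Gmc$ reduces the statement to the classical even-moment bound for centered, bounded, (conditionally) i.i.d.\ variables, the vanishing of terms with a singleton index is justified by the conditional independence together with $\Emb[Z_i\mid\Gmc]=0$, and the conditional Lyapunov step handles odd $k$ at the right rate. The paper omits the proof of this lemma as elementary, and the route you describe is precisely the standard one it presupposes, so there is nothing to reconcile between the two.
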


\subsection{Proofs of Theorems \ref{thm:no_acceleration_1} and \ref{thm:no_acceleration_2}}
\label{sec:pf-no-acc}

{The proofs of Theorems \ref{thm:no_acceleration_1} and \ref{thm:no_acceleration_2} are mainly based on tightness, weak convergence and coupling arguments, which may be skipped for specialists.}

\begin{proof}[Proof of Theorem \ref{thm:no_acceleration_1}]
	(a) We will adapt the argument in \cite{KurtzXiong1999} to prove {the weak existence} of the system \eqref{eq:no_acceleration_system}.	
	For $m \in \Nmb$, let 
	\begin{equation}
		\Ntil_i^m([0,t] \times A) := {\int_{[0,\frac{\lfl mt \rfl}{m}]\times A} N_i(ds\,dy\,dz)}, \:\: \Ntil_{ij}^m([0,t] \times A) := {\int_{[0,\frac{\lfl mt \rfl}{m}]\times A} N_{ij}(ds\,dy\,dz)}, \label{eq:Ntil}
	\end{equation}	
	for $t \in [0,T]$, $A \subset \Bmc(\Zmb \times \Rmb_+)$ and $i,j \in \Nmb$.
	Consider an approximation system that is driven by $\{\Ntil_i^m, \Ntil_{ij}^m : i,j\in\Nmb\}$ as follows:
	\begin{align}
		\Xtil_i^m(t) & = X_i(0) + \int_{\Xmb_t} y\one_{[0,\Gamma(y,\Xtil_i^m(s-),\nutil_i^m(s-))]}(z) \, \Ntil_i^m(ds\,dy\,dz), \label{eq:Xtilm} \\
		\xitil_{ij}^m(t) & = \xi_{ij}(0) + \int_{\Xmb_t} y\one_{[0,\beta\Gammatil(y,\xitil_{ij}^m(s-),\Xtil_i^m(s-),\Xtil_j^m(s-))]}(z) \, \Ntil_{ij}^m(ds\,dy\,dz), \label{eq:xitilm} \\
		\nutil_i^m(t) & = \lim_{n \to \infty} \frac{1}{n} \sum_{j=1}^n \delta_{(\Xtil_j^m(t),\xitil_{ij}^m(t))} = \lim_{n \to \infty} \frac{1}{n} \sum_{j=1}^n \delta_{(\Xtil_j^m(\frac{\lfl mt \rfl}{m}),\xitil_{ij}^m(\frac{\lfl mt \rfl}{m}))} \notag \\
		& = \lim_{n \to \infty} \frac{1}{n} \sum_{j=1}^n \delta_{(\Xtil_j^m(\frac{k}{m}),\xitil_{ij}^m(\frac{k}{m}))} \quad \mbox{ for } \frac{k}{m} \le t < \frac{k+1}{m}, \quad k \in \Nmb_0. \notag
	\end{align}
	Note that the system is piece-wise constant and determined recursively over intervals of length $\frac{1}{m}$.
	We claim that the following holds for each $k \in \Nmb_0$:
		There exists a unique solution $\{(\Xtil_i^m[\frac{k}{m}],\xitil_{ij}^m[\frac{k}{m}],\nutil_i^m[\frac{k}{m}]) : i,j \in \Nmb \}$, which is exchangeable, namely for each $K \in \Nmb$ and permutation $\pi$ on $\{1,\dotsc,K\}$,
		\begin{equation*}
			\{(\Xtil_i^m[\frac{k}{m}],\xitil_{ij}^m[\frac{k}{m}],\nutil_i^m[\frac{k}{m}]) : i,j \in \Nmb \} \stackrel{d}{=} \{(\Xtil_{\pi(i)}^m[\frac{k}{m}],\xitil_{\pi(i)\pi(j)}^m[\frac{k}{m}],\nutil_{\pi(i)}^m[\frac{k}{m}]) : i,j \in \Nmb \}.
		\end{equation*}
	Note that we are not claiming the exchangeability of $(\xitil_{ij}^m : j \in \Nmb)$ for each fixed $i$, as the evolution of $\xitil_{ii}^m$ is different from that of other $\xitil_{ij}^m$.	
	Now we prove this claim by induction.
	Since $(X_i(0))$ and $(\xi_{ij}(0))$ are all i.i.d., the claim holds for $k=0$.
	Now assume the claim holds for some $k \in \Nmb_0$.
	Since the solution is linear on $[\frac{k}{m},\frac{k+1}{m})$, we have the existence and uniqueness of $\{(\Xtil_i^m[\frac{k+1}{m}],\xitil_{ij}^m[\frac{k+1}{m}]) : i,j \in \Nmb \}$ {by \eqref{eq:weaker-assumption}}, and the exchangeability of $\{(\Xtil_i^m[\frac{k+1}{m}],\xitil_{ij}^m[\frac{k+1}{m}],\nutil_i^m[\frac{k}{m}]) : i,j \in \Nmb \}$.
	This further implies the existence and uniqueness of $\nutil_i^m(\frac{k+1}{m})$ by de Finetti's theorem (cf.\ \cite[Theorem 4.1]{KotelenezKurtz2008}, see also \cite[Theorem 3.1]{Aldous1985exchangeability}), and the exchangeability of $\{(\Xtil_i^m[\frac{k+1}{m}],\xitil_{ij}^m[\frac{k+1}{m}],\nutil_i^m[\frac{k+1}{m}]) : i,j \in \Nmb \}$.
	Therefore the claim holds for $k+1$ and hence holds for each $k \in \Nmb_0$ by induction.
	
	Using {\eqref{eq:weaker-assumption}, Condition \ref{cond:no_acceleration}(ii)}, the evolution in \eqref{eq:Xtilm} and \eqref{eq:xitilm}, and Gronwall's inequality, one can show that
	\begin{equation*}
		\Emb \|\Xtil_i^m\|_{*,T} + \Emb \|\xitil_{ij}^m\|_{*,T} \le \kappa_1, \quad \forall \, i,j \in \Nmb,	
	\end{equation*}
	which implies the tightness of $\{(\Xtil_i^m(t),\xitil_{ij}^m(t)):i,j \in \Nmb\}$ in $\Zmb^\infty$ for each $t \in [0,T]$.
	Moreover, for the fluctuations, one can easily verify that
	\begin{align}
		\Emb |\Xtil_i^m(\tau+\delta)-\Xtil_i^m(\tau)| \le C_\gamma \left(\delta+\frac{1}{m}\right), & \quad \Emb |\xitil_{ij}^m(\tau+\delta)-\xitil_{ij}^m(\tau)| \le \beta C_\gamma \left(\delta+\frac{1}{m}\right), \label{eq:fluctuation_est_1}
	\end{align}
	for each $\delta \in (0,1)$ and $\Fmc_t$-stopping times $\tau$ with $\tau \in [0,T-\delta]$ a.s.
	Therefore the sequence of $\{(\Xtil_i^m,\xitil_{ij}^m):i,j \in \Nmb\}$ is tight in $\Dmb([0,T]:\Zmb^\infty)$ by applying Aldous' tightness criterion \cite[Theorem 2.7]{Kurtz1981approximation} with $m \to \infty$ and then $\delta \to 0$.
	
	Taking a subsequence if necessary, we assume that $\{(\Xtil_i^m,\xitil_{ij}^m):i,j \in \Nmb\} \Rightarrow \{(\Xtil_i,\xitil_{ij}):i,j \in \Nmb\}$, defined on some probability space $(\Omegatil,\Fmctil,\Pmbtil)$, as $m \to \infty$.
	Since $\{(\Xtil_j^m,\xitil_{ij}^m):j \in \Nmb, j \ne i\}$ is exchangeable, so is $\{(\Xtil_j,\xitil_{ij}):j \in \Nmb, j \ne i\}$, and we can define
	\begin{equation}
		\label{eq:nutil}
		\nutil_i(t) := \lim_{n \to \infty} \frac{1}{n} \sum_{j=1}^n \delta_{(\Xtil_j(t),\xitil_{ij}(t))}.
	\end{equation}
	It then follows from \cite[Lemma 2.1]{KurtzXiong1999} (see also \cite[Lemma 4.2]{KotelenezKurtz2008}), and appealing to the Skorokhod representation theorem, that
	\begin{align}
		& \{(\Xtil_i^m(t_k),\xitil_{ij}^m(t_k),\nutil_i^m(t_k)):i,j \in \Nmb, k \in [K]\} \notag \\
		& \quad \Rightarrow \{(\Xtil_i(t_k),\xitil_{ij}(t_k),\nutil_i(t_k)):i,j \in \Nmb, k \in [K]\} \label{eq:no_acc_nutil}
	\end{align}
	in $\Zmb^\infty \times \Pmc(\Zmb^4)^\infty$ as $m \to \infty$, for each $K \in \Nmb$ and $t_1,\dotsc,t_K \in T_C \subset [0,T]$, where $[0,T] \setminus T_C$ is at most countable.
	Let
	\begin{equation*}
		\widetilde\Xi_i^m := \lim_{n \to \infty} \frac{1}{n} \sum_{j=1}^n \delta_{(\Xtil_j^m(\cdot),\xitil_{ij}^m(\cdot))}.
	\end{equation*}
	Denoting by $\bar\rho$ the Prohorov metric on $\Pmc(\Zmb^2)$, we have
	\begin{align}
		\Emb \bar\rho(\nutil_i^m(\tau+\delta),\nutil_i^m(\tau)) 
		& \le \Emb \left[ \widetilde\Xi_i^m\{x \in \Dmb([0,T]:\Zmb^2):|x(\tau+\delta)-x(\tau)| \ge \varepsilon\} \right] + \varepsilon \notag \\
		& = \Pmb(|(\Xtil_j^m(\tau+\delta),\xitil_{ij}^m(\tau+\delta)) - (\Xtil_j^m(\tau),\xitil_{ij}^m(\tau))| \ge \varepsilon) + \varepsilon \notag \\
		& \le \frac{(1+\beta)C_\gamma (\delta+\frac{1}{m})}{\varepsilon} + \varepsilon \label{eq:fluctuation_Theta}
	\end{align}
	for each $\varepsilon, \delta \in (0,1)$ and $\Fmc_t$-stopping times $\tau$ with $\tau \in [0,T-\delta]$ a.s., where the last line uses \eqref{eq:fluctuation_est_1}.
	Therefore $\{(\Xtil_i^m,\xitil_{ij}^m,\nutil_i^m):i,j \in \Nmb\}$ is tight in $\Dmb([0,T]:\Zmb^\infty \times (\Pmc(\Zmb^2))^\infty)$ by applying Aldous' tightness criterion \cite[Theorem 2.7]{Kurtz1981approximation} with $m \to \infty$, $\delta \to 0$ and then $\varepsilon \to 0$.
	By \eqref{eq:no_acc_nutil}, the finite dimensional distributions converge for time instants in a dense subset set of $[0,T]$.
	So 
	\begin{equation}
		\label{eq:no_acc_cvg0}
		\{(\Xtil_i^m,\xitil_{ij}^m,\nutil_i^m):i,j \in \Nmb\} \Rightarrow \{(\Xtil_i,\xitil_{ij},\nutil_i):i,j \in \Nmb\}
	\end{equation}
	in $\Dmb([0,T]:\Zmb^\infty \times (\Pmc(\Zmb^2))^\infty)$.

	Next, we will verify that $\{(\Xtil_i,\xitil_{ij},\nutil_i):i,j \in \Nmb\}$ satisfies \eqref{eq:no_acceleration_system}. 
	We will need the weak convergence of stochastic integrals with respect to Poisson random measures later, and the following notations from \cite{KurtzProtter1996weak2}.
	Let $\Hmb := {L^1}(\Zmb\times\Rmb_+,\measurea(dy)\,dz)$.
	Given a Polish space $\Smb$, a collection of $\Smb$-valued stochastic processes $S^m$, $S$, and Poisson random measures $\{Y^m_i(ds\,dy\,dz):i \in \Nmb\}$ and $\{Y_i(ds\,dy\,dz):i \in \Nmb\}$ viewed as $\Hmb^\#$-semimartingales (see \cite[Section 3.3]{KurtzProtter1996weak2} for the precise definition), we say that $(S^m,\{Y^m_i:i\in\Nmb\}) \Rightarrow (S,\{Y_i:i\in\Nmb\})$ in $\Dmb([0,T]:\Smb) \otimes \Hmb^\#$ if 
	\begin{align*}
		& \left(S^m, \left\{ \int_{\Xmb_\cdot} \varphi_k(y,z) \, Y^m_i(ds\,dy\,dz) : i \in \Nmb, k \in [K]\right\} \right) \\
		& \quad \Rightarrow \left(S, \left\{ \int_{\Xmb_\cdot} \varphi_k(y,z) \, Y_i(ds\,dy\,dz) : i \in \Nmb, k \in [K]\right\} \right)
	\end{align*}
	in $\Dmb([0,T]:\Smb \times \Rmb^\infty)$ for each $K \in \Nmb$ and $\varphi_1,\dotsc,\varphi_K \in \Hmb$.
	Now take a dense sequence $\{\varphi_k\} \subset \Hmb$ and recall $\Ntil_i^m$ and $\Ntil_{ij}^m$ defined in \eqref{eq:Ntil}.
	{For each fixed $t \in [0,T]$, since
	\begin{align*}
		& \Emb \left|\int_{\Xmb_t} \varphi_k(y,z) \Ntil_i^m(ds\,dy\,dz)\right| + \Emb \left|\int_{\Xmb_t} \varphi_k(y,z) \Ntil_{ij}^m(ds\,dy\,dz)\right| \\
		& \le 2 \int_{\Xmb_t} |\varphi_k(y,z)| \,ds\,\measurea(dy)\,dz \le 2t \|\varphi_k\|_\Hmb, \quad \forall \, i,j \in \Nmb, k \in [K],
	\end{align*}
	we have that the sequence} 
	$$\left\{ \left(\Xtil_i^m(t),\xitil_{ij}^m(t),\nutil_i^m(t), \int_{\Xmb_t} \varphi_k(y,z) \Ntil_i^m(ds\,dy\,dz), \int_{\Xmb_t} \varphi_k(y,z) \Ntil_{ij}^m(ds\,dy\,dz)\right) : i,j \in \Nmb, {k \in [K]} \right\}$$
	is tight in $\Zmb^\infty \times (\Pmc(\Zmb^2))^\infty \times \Rmb^\infty$.
	Also note that
	\begin{align*}
		& \Emb \left| \int_{[0,\tau+\delta] \times \Zmb \times \Rmb_+} \varphi_k(y,z) \, \Ntil_i^m(ds\,dy\,dz) - \int_{[0,\tau] \times \Zmb \times \Rmb_+} \varphi_k(y,z) \, \Ntil_i^m(ds\,dy\,dz) \right| \\
		& \quad \le \left(\delta+\frac{1}{m}\right) \int_{\Zmb \times \Rmb_+} |\varphi_k(y,z)| \, \measurea(dy)\,dz = \left(\delta+\frac{1}{m}\right) \|\varphi_k\|_\Hmb, \\
		& \Emb \left| \int_{[0,\tau+\delta] \times \Zmb \times \Rmb_+} \varphi_k(y,z) \, \Ntil_{ij}^m(ds\,dy\,dz) - \int_{[0,\tau] \times \Zmb \times \Rmb_+} \varphi_k(y,z) \, \Ntil_{ij}^m(ds\,dy\,dz) \right| \\
		& \quad \le \left(\delta+\frac{1}{m}\right) \|\varphi_k\|_\Hmb,
	\end{align*}
	for each $\delta \in (0,1)$ and $\Fmc_t$-stopping times $\tau$ with $\tau \in [0,T-\delta]$ a.s.
	Combining this with \eqref{eq:fluctuation_est_1} and \eqref{eq:fluctuation_Theta} implies that
	$$\left\{ \left(\Xtil_i^m,\xitil_{ij}^m,\nutil_i^m, \int_{\Xmb_\cdot} \varphi_k(y,z) \Ntil_i^m(ds\,dy\,dz), \int_{\Xmb_\cdot} \varphi_k(y,z) \Ntil_{ij}^m(ds\,dy\,dz)\right) : i,j \in \Nmb, {k \in [K]} \right\}$$
	is tight in $\Dmb([0,T]:\Zmb^\infty \times (\Pmc(\Zmb^2))^\infty \times \Rmb^\infty)$, once again by Aldous' tightness criterion \cite[Theorem 2.7]{Kurtz1981approximation} (by taking $m \to \infty$, $\delta \to 0$ and then $\varepsilon \to 0$).
	Therefore $\{(\Xtil_i^m,\xitil_{ij}^m,\nutil_i^m,\Ntil_i^m,\Ntil_{ij}^m):i,j \in \Nmb\}$ is tight in $\Dmb([0,T]:\Zmb^\infty \times (\Pmc(\Zmb^2))^\infty) \otimes \Hmb^\#$.
	From this, \eqref{eq:no_acc_cvg0} and \eqref{eq:Ntil} we have that, taking a subsequence if necessary,
	\begin{equation}
		\label{eq:no_acc_joint_cvg}
		\{(\Xtil_i^m,\xitil_{ij}^m,\nutil_i^m,\Ntil_i^m,\Ntil_{ij}^m):i,j \in \Nmb\} \Rightarrow \{(\Xtil_i,\xitil_{ij},\nutil_i,\Ntil_i,\Ntil_{ij}):i,j \in \Nmb\}
	\end{equation}
	defined again on the probability space $(\Omegatil,\Fmctil,\Pmbtil)$, 
	in $\Dmb([0,T]:\Zmb^\infty \times (\Pmc(\Zmb^2))^\infty) \otimes \Hmb^\#$,
	where {$\{\Ntil_i,\Ntil_{ij} : i,j \in \Nmb\}$ are i.i.d.\ PRM on $\Xmb_T$ with intensity measure $ds \times \measurea(dy) \times dz$.}

	Noting that the maps
	\begin{align*}
		\Zmb \times \Pmc(\Zmb^2) \ni (x,\nu) \mapsto y\one_{[0,\Gamma(y,x,\nu)]}(z) \in \Hmb, \quad \Zmb^3 \ni (\xi,x,x') \mapsto y\one_{[0,\beta\Gammatil(y,\xi,x,x')]}(z) \in \Hmb
	\end{align*}
	are continuous, from \eqref{eq:no_acc_joint_cvg} and the continuous mapping theorem we have 
	\begin{align*}
		y\one_{[0,\Gamma(y,\Xtil_i^m(\cdot),\nutil_i^m(\cdot))]}(z) & \Rightarrow y\one_{[0,\Gamma(y,\Xtil_i(\cdot),\nutil_i(\cdot))]}(z) \mbox{ in } \Dmb([0,T]:\Hmb), \\
		y\one_{[0,\beta\Gammatil(y,\xitil_{ij}^m(\cdot),\Xtil_i^m(\cdot),\Xtil_j^m(\cdot))]}(z) & \Rightarrow y\one_{[0,\beta\Gammatil(y,\xitil_{ij}(\cdot),\Xtil_i(\cdot),\Xtil_j(\cdot))]}(z) \mbox{ in } \Dmb([0,T]:\Hmb),
	\end{align*}
	jointly with the convergence in \eqref{eq:no_acc_joint_cvg}.
	Also note that for each $m \in \Nmb$ and stochastic process $Z \in \Dmb([0,T]:\Hmb)$ with $\sup_{0 \le t \le T} \|Z(t)\|_\Hmb \le 1$, we have
	\begin{align*}
		& \Emb \left\| \int_{\Xmb_\cdot} Z(s,y,z)\, \Ntil_i^m(ds\,dy\,dz) \right\|_{*,T} + \Emb \left\| \int_{\Xmb_\cdot} Z(s,y,z)\, \Ntil_{ij}^m(ds\,dy\,dz) \right\|_{*,T} \\
		& \le 2 \Emb \int_{\Xmb_T} |Z(s,y,z)| \,ds\,\measurea(dy)\,dz \le 2T.
	\end{align*}
	It then follows from the convergence of stochastic integrals (cf.\ \cite[Theorem 4.2]{KurtzProtter1996weak2}), \eqref{eq:Xtilm}, \eqref{eq:xitilm}, and \eqref{eq:nutil} that $\{(\Xtil_i,\xitil_{ij},\nutil_i):i,j \in \Nmb\}$ is a solution to the limiting system \eqref{eq:no_acceleration_system}.
	{This gives the weak existence in part (a).}
	
	(b) We first prove pathwise uniqueness.
	Suppose $\{(X_i,\xi_{ij}, \nu_i):i,j\in\Nmb\}$ and $\{(\Xtil_i,\xitil_{ij}, \nutil_i):i,j\in\Nmb\}$ are two solutions of \eqref{eq:no_acceleration_system} with $X_i(0)=\Xtil(0)$ and $\xi_{ij}(0)=\xitil_{ij}(0)$ for $i,j \in \Nmb$.
	Using Condition \ref{cond:no_acceleration} and Gronwall's inequality, one can show that
	\begin{equation*}
		\Emb \left[ \|X_i\|_{*,T} + \|\xi_{ij}\|_{*,T} + \|\Xtil_i\|_{*,T} + \|\xitil_{ij}\|_{*,T} \right] \le \kappa_2, \quad \forall \, i,j \in \Nmb.	
	\end{equation*}
	By adding and subtracting terms, for $t \in [0,T]$,
	\begin{align*}
		\Emb \|X_i-\Xtil_i\|_{*,t} & \le \Emb \int_{\Xmb_t} |y| \left| \one_{[0,\Gamma(y,X_i(s-),\nu_i(s-))]}(z) - \one_{[0,\Gamma(y,\Xtil_i(s-),\nutil_i(s-))]}(z) \right| N_i(ds\,dy\,dz) \\
		& \le \Emb \int_{[0,t]\times\Zmb} |y| \left| \Gamma(y,X_i(s),\nu_i(s)) - \Gamma(y,\Xtil_i(s),\nutil_i(s)) \right| ds\,\measurea(dy) \\
		& \le \Emb \int_{[0,t]\times\Zmb} |y| \left| \lan \gamma(y,X_i(s),\cdot), \nu_i(s) - \nutil_i(s) \ran \right| ds\,\measurea(dy) \\
		& \quad + \Emb \int_{[0,t]\times\Zmb} |y| \left| \lan \gamma(y,X_i(s),\cdot) - \gamma(y,\Xtil_i(s),\cdot), \nutil_i(s) \ran \right| ds\,\measurea(dy) \\
		& \le \int_{[0,t]\times\Zmb} |y| \gamma_y \left( \Emb \dBL(\nu_i(s), \nutil_i(s)) + \Emb \left| X_i(s)-\Xtil_i(s) \right| \right) ds\,\measurea(dy) \\
		& \le C_\gamma \int_0^t \left( \Emb \left[ \sup_{u \in [0,s]} \dBL(\nu_i(u), \nutil_i(u)) \right] + \Emb \|X_i-\Xtil_i\|_{*,s} \right) ds,
	\end{align*}
	where the fifth line uses the Lipschitz property of $\gamma$ guaranteed by Condition \ref{cond:no_acceleration}(i) and Remark \ref{rmk:Lipschitz}(b).
	Similarly,
	\begin{align*}
		& \Emb \|\xi_{ij}-\xitil_{ij}\|_{*,t} \\
		& \le \Emb \int_{\Xmb_t} |y| \left| \one_{[0,\beta\Gammatil(y,\xi_{ij}(s-),X_i(s-),X_j(s-))]}(z)  - \one_{[0,\beta\Gammatil(y,\xitil_{ij}(s-),\Xtil_i(s-),\Xtil_j(s-))]}(z) \right| N_{ij}(ds\,dy\,dz) \\
		& \le \Emb \int_{[0,t]\times\Zmb} |y| \beta\left| \Gammatil(y,\xi_{ij}(s),X_i(s),X_j(s)) - \Gammatil(y,\xitil_{ij}(s),\Xtil_i(s),\Xtil_j(s)) \right| ds\,\measurea(dy) \\
		& \le \int_{[0,t]\times\Zmb} |y| \beta\gamma_y \left( \Emb \left| \xi_{ij}(s)-\xitil_{ij}(s) \right| + \Emb \left| X_i(s)-\Xtil_i(s) \right| + \Emb \left| X_j(s)-\Xtil_j(s) \right| \right) ds\,\measurea(dy) \\
		& \le \beta C_\gamma \int_0^t \left( \Emb \|\xi_{ij}-\xitil_{ij}\|_{*,s} + \Emb \|X_i-\Xtil_i\|_{*,s} + \Emb \|X_j-\Xtil_j\|_{*,s} \right) ds,
	\end{align*}	
	where the fourth line uses the Lipschitz property of $\Gammatil$ guaranteed by Condition \ref{cond:no_acceleration}(i) and Remark \ref{rmk:Lipschitz}(b).
	Also by Fatou's lemma,
	\begin{align*}
		\Emb \left[ \sup_{s \in [0,t]} \dBL(\nu_i(s), \nutil_i(s)) \right] & = \Emb \left[ \sup_{s \in [0,t]} \sup_{f \in \Bmb_1} |\lan f,\nu_i(s)- \nutil_i(s) \ran| \right] \\
		& \le \liminf_{n \to \infty} \frac{1}{n} \sum_{j=1}^n \left( \Emb \|X_j-\Xtil_j\|_{*,t} + \Emb \|\xi_{ij}-\xitil_{ij}\|_{*,t} \right) \\
		& \le \sup_{i,j \in \Nmb} \left( \Emb \|X_j-\Xtil_j\|_{*,t} + \Emb \|\xi_{ij}-\xitil_{ij}\|_{*,t} \right).
	\end{align*}
	Combining these three estimates gives
	\begin{align*}
		&  \sup_{i,j \in \Nmb} \left( \Emb \|X_j-\Xtil_j\|_{*,t} + \Emb \|\xi_{ij}-\xitil_{ij}\|_{*,t} \right) \\
		&  \quad \le 2(1+\beta)C_\gamma \int_0^t \sup_{i,j \in \Nmb} \left( \Emb \|X_j-\Xtil_j\|_{*,s} + \Emb \|\xi_{ij}-\xitil_{ij}\|_{*,s} \right) ds.
	\end{align*}
	The pathwise uniqueness then follows from Gronwall's lemma.

	{Next, using the pathwise uniqueness and the weak existence in part (a), it follows from the Yamada--Watanabe theorem (\cite[Theorem 1.5]{Kurtz2014weak})} that there exists a pathwise solution of \eqref{eq:no_acceleration_system}.
	
	{Finally}, recall the discretized system $(\Xtil_i^m, \xitil_{ij}^m, \nutil_i^m)$ given in \eqref{eq:Xtilm} and \eqref{eq:xitilm}.
	We claim that the following holds for each $k \in \Nmb_0$:
	\begin{enumerate}[(i)]
	\item $\{\Xtil_i^m[\frac{k}{m}] : i \in \Nmb\}$ are i.i.d.;
	\item $\{\Xtil_i^m[\frac{k}{m}] : i \in \Nmb\}$ are independent of $\{(\xi_{ij}(0),\Ntil_{ij}^m) :i,j \in \Nmb\}$;
	\item $\{(\Xtil_j^m[\frac{k}{m}],\xitil_{ij}^m[\frac{k}{m}],\Ntil_{ij}^m) : j \in \Nmb, j \ne i\}$ are i.i.d.\ conditioning on $\Xtil_i^m[\frac{k}{m}]$, and the conditional law $\Lmc(\{(\Xtil_j^m[\frac{k}{m}],\xitil_{ij}^m[\frac{k}{m}],\Ntil_{ij}^m) : j \in \Nmb, j \ne i\} \,|\, \Xtil_i^m[\frac{k}{m}]) = \Phi_{k,m}(\Xtil_i^m[\frac{k}{m}])$ for some measurable map $\Phi_{k,m} \colon \Dmb([0,\frac{k}{m}]:\Zmb) \to \Pmc((\Dmb([0,\frac{k}{m}]:\Zmb) \times \Dmb([0,\frac{k}{m}]:\Zmb) \times \Mmc_T)^\infty)$ independent of the choice of $i$.
	\end{enumerate}
	Again, we will prove this by induction.
	Since $(X_i(0))$ and $(\xi_{ij}(0))$ are all i.i.d., the claim holds for $k=0$.
	Now assume the claim holds for some $k \in \Nmb_0$.	
	From (iii) we have
	$$\nutil_i^m(\frac{k}{m}) = \Lmc((\Xtil_j^m(\frac{k}{m}),\xitil_{ij}^m(\frac{k}{m})) \,|\, \Xtil_i^m[\frac{k}{m}]) = \widetilde\Phi_{k,m}(\Xtil_i^m[\frac{k}{m}])$$
	for some measurable map $\widetilde\Phi_{k,m} \colon \Dmb([0,\frac{k}{m}]:\Zmb) \to \Pmc(\Zmb^2)$ independent of the choice of $i$.
	From (i), (ii) and the evolution of $\Xtil_i^m$ we see that $\{\Xtil_i^m[\frac{k+1}{m}] : i \in \Nmb\}$ are i.i.d., and independent of $\{(\xi_{ij}(0),\Ntil_{ij}^m) : i,j \in \Nmb\}$.
	It then follows from the evolution of $\xitil_{ij}^m$ that $\{(\Xtil_j^m[\frac{k+1}{m}],\xitil_{ij}^m[\frac{k+1}{m}],\Ntil_{ij}^m) : j \in \Nmb, j \ne i\}$ are i.i.d.\ conditioning on $\Xtil_i^m[\frac{k+1}{m}]$, and the conditional law $\Lmc(\{(\Xtil_j^m[\frac{k+1}{m}],\xitil_{ij}^m[\frac{k+1}{m}],\Ntil_{ij}^m) : j \in \Nmb, j \ne i\} \,|\, \Xtil_i^m[\frac{k+1}{m}]) = \Phi_{k+1,m}(\Xtil_i^m[\frac{k+1}{m}])$ for some measurable map $\Phi_{k+1,m} \colon \Dmb([0,\frac{k+1}{m}]:\Zmb) \to \Pmc((\Dmb([0,\frac{k+1}{m}]:\Zmb) \times \Dmb([0,\frac{k+1}{m}]:\Zmb) \times \Mmc_T)^\infty)$ independent of the choice of $i$.
	Therefore the claim holds for $k+1$ and hence holds for each $k \in \Nmb_0$ by induction.
	
	Now from the convergence \eqref{eq:no_acc_joint_cvg} we have that $\{X_i : i \in \Nmb\}$ are i.i.d., and independent of $\{(\xi_{ij}(0),N_{ij}) : i,j \in \Nmb\}$.
	From the evolution of $\xi_{ij}$ we see that $\xi_{ij} = \Psi(\xi_{ij}(0),X_i,X_j,N_{ij})$ for some measurable map $\Psi$.
	Therefore $\{(X_j[t],\xi_{ij}[t]) : j \in \Nmb, j \ne i\}$ are i.i.d.\ conditioning on $X_i[t]$, and $\nu_i(t) = \Lmc((X_j(t),\xi_{ij}(t)) \,|\, X_i[t])= \Phi_t(X_i[t])$ for some measurable map $\Phi_t \colon \Dmb([0,t]:\Zmb) \to \Pmc(\Zmb^2)$ independent of the choice of $i$.
	This gives part (b) and completes the proof.
\end{proof}

\begin{Remark}
	\label{rmk:alternative-proof}
	
	The above proof of pathwise existence uses the weak existence shown under the weaker assumption \eqref{eq:weaker-assumption}.
	An alternative proof under Condition \ref{cond:no_acceleration}(i), which implies the Lipschitz property of $\gamma(y,\cdot)$ and $\Gammatil(y,\cdot)$, is to define $\nu_i(t) := \Lmc((X_j(t),\xi_{ij}(t)) \,|\, X_i(0),N_i)$ and use a classical and easier Picard iteration argument, with a similar calculation as for the above proof of pathwise uniqueness.	
	Using this argument and formulation, one could further show that $\nu_i(t) = \widetilde{\Phi}_t(X_i(0),N_i)$, $\xi_{ij} = \widetilde{\Psi}(\xi_{ij}(0),N_{ij},X_i,X_j)$ and $X_i = \widetilde{\Theta}(X_i(0),N_i)$ for suitable measurable maps $\widetilde{\Phi}_t, \widetilde{\Psi}, \widetilde{\Theta}$.
	From this one could get $\nu_i(t) = \lim_{n \to \infty} \frac{1}{n} \sum_{j=1}^n \delta_{(X_j(t),\xi_{ij}(t))}$ and Theorem \ref{thm:no_acceleration_1}(b) without using the auxiliary processes $\{\Xtil_i^m,\xitil_{ij}^m,\nutil_i^m\}$. 
\end{Remark}

\begin{proof}[Proof of Theorem \ref{thm:no_acceleration_2}]
	(a) For each fixed $i \in [n]$ and $t \in [0,T]$, we have
	\begin{align*}
		& \Emb \|X_i^n-X_i\|_{*,t} \\
		& \le \Emb \int_{\Xmb_t} |y| \left| \one_{[0,\Gamma(y,X_i^n(s-),\nu_i^n(s-))]}(z) - \one_{[0,\Gamma(y,X_i(s-),\nu_i(s-))]}(z) \right| N_i(ds\,dy\,dz) \\
		& = \Emb \int_{[0,t]\times\Zmb} |y| \left| \Gamma(y,X_i^n(s),\nu_i^n(s)) - \Gamma(y,X_i(s),\nu_i(s)) \right| ds\,\measurea(dy).
	\end{align*}
	Since $\gamma$ is Lipschitz by Condition \ref{cond:no_acceleration}(i), and $\{(X_j(s), \xi_{ij}(s)) : j \in \Nmb, j \ne i\}$ are conditionally independent given $X_i[s]$ with the conditional law $\Lmc((X_j(s), \xi_{ij}(s)) |X_i[s]) = \nu_i(s)$, for $j \ne i$, by Theorem \ref{thm:no_acceleration_1}(b), we have
	\begin{align*}
		& \Emb |\Gamma(y,X_i^n(s),\nu_i^n(s))-\Gamma(y,X_i(s),\nu_i(s))| \\
		& \le \Emb \left| \frac{1}{n} \sum_{j=1}^n \gamma(y,X_i^n(s),X_j^n(s),\xi_{ij}^n(s)) - \frac{1}{n} \sum_{j=1}^n \gamma(y,X_i(s),X_j(s),\xi_{ij}(s)) \right| \\
		& \qquad + \Emb \left| \frac{1}{n} \sum_{j=1}^n \gamma(y,X_i(s),X_j(s),\xi_{ij}(s)) - \int_{\Zmb^2} \gamma(y,X_i(s),\xtil,\xitil) \,\nu_i(s)(d\xtil\,d\xitil) \right| \\
		& \le \gamma_y \left( \Emb |X_i^n(s)-X_i(s)| + \frac{1}{n} \sum_{j=1}^n \Emb |X_j^n(s)-X_j(s)| + \frac{1}{n} \sum_{j=1}^n \Emb |\xi_{ij}^n(s)-\xi_{ij}(s)| + \frac{\kappa_1}{\sqrt{n}} \right) \\
		& \le \gamma_y \left( 2 \max_{i \in [n]} \Emb \|X_i^n-X_i\|_{*,s} + \max_{i,j \in [n]} \Emb \|\xi_{ij}^n-\xi_{ij}\|_{*,s} + \frac{\kappa_1}{\sqrt{n}} \right),
	\end{align*}
	where the term $\frac{\kappa_1}{\sqrt{n}}$ in the second inequality follows from Lemma \ref{lem:iid-moment}.
	Also since $\Gammatil$ is Lipschitz by Condition \ref{cond:no_acceleration}(i), we have
	\begin{align*}
		& \Emb \|\xi_{ij}^n-\xi_{ij}\|_{*,t} \\
		& \le \Emb \int_{\Xmb_t} |y| \left| \one_{[0,\beta(n)\Gammatil(y,\xi_{ij}^n(s-),X_i^n(s-),X_j^n(s-))]}(z) - \one_{[0,\beta\Gammatil(y,\xi_{ij}(s-),X_i(s-),X_j(s-))]}(z) \right| N_{ij}(ds\,dy\,dz) \\
		& = \Emb \int_{[0,t]\times\Zmb} |y| \left| \beta(n)\Gammatil(y,\xi_{ij}^n(s),X_i^n(s),X_j^n(s)) - \beta\Gammatil(y,\xi_{ij}(s),X_i(s),X_j(s)) \right| ds\,\measurea(dy) \\
		& \le \Emb \int_{[0,t]\times\Zmb} |y| \left[ \gamma_y |\beta(n)-\beta| \right. \\
		& \qquad \left. + \gamma_y \beta\left( |\xi_{ij}^n(s)-\xi_{ij}(s)| + |X_i^n(s)-X_i(s)| + |X_j^n(s)-X_j(s)| \right) \right] ds\,\measurea(dy) \\
		& \le C_\gamma |\beta(n)-\beta| t + C_\gamma \beta \int_0^t \max_{i,j \in [n]} \Emb \|\xi_{ij}^n-\xi_{ij}\|_{*,s} \,ds + 2C_\gamma \beta \int_0^t \max_{i \in [n]} \Emb \|X_i^n-X_i\|_{*,s} \,ds.
	\end{align*}
	Combining above three displays gives
	\begin{align*}
		& \max_{i \in [n]} \Emb \|X_i^n-X_i\|_{*,t} + \max_{i,j \in [n]} \Emb \|\xi_{ij}^n-\xi_{ij}\|_{*,t} \\
		& \quad \le 2C_\gamma (\beta+1) \int_0^t \left( \max_{i \in [n]} \Emb \|X_i^n-X_i\|_{*,s} + \max_{i,j \in [n]} \Emb \|\xi_{ij}^n-\xi_{ij}\|_{*,s} \right) ds + \frac{\kappa_1C_\gamma t}{\sqrt{n}} + C_\gamma |\beta(n)-\beta| t.
	\end{align*}
	From Gronwall's inequality we have \eqref{eq:no_acc_LLN1}.

	(b) Using (a), Theorem \ref{thm:no_acceleration_1}(b) and a standard argument (see \cite[Chapter 1]{Sznitman1991}) one has \eqref{eq:no_acceleration_POC}.

	(c) 
	Fix $t \in [0,T]$ and $i \in \Nmb$.
	First note that $\nu_i$ is well defined, thanks to the conditional i.i.d.\ property in Theorem \ref{thm:no_acceleration_1}(b) of $\{(X_j,\xi_{ij}) : j \in \Nmb, j \ne i\}$ given $X_i$.
	Let $$\nubar_i^n(t) := \frac{1}{n} \sum_{j=1}^n \delta_{(X_i(t),\xi_{ij}(t))}, \quad \nubar_i^n := \frac{1}{n} \sum_{j=1}^n \delta_{(X_i(\cdot),\xi_{ij}(\cdot))}.$$
	From \eqref{eq:no_acc_LLN1} we have
	\begin{align*}
		\Emb \dBL(\nu_i^n(t),\nubar_i^n(t)) & \le \frac{1}{n} \sum_{j=1}^n \Emb \left( |X_j^n(t)-X_j(t)| + |\xi_{ij}^n(t)-\xi_{ij}(t)| \right) \to 0, \\
		\Emb \dBL(\nu_i^n,\nubar_i^n) & \le \frac{1}{n} \sum_{j=1}^n \Emb \left( \|X_j^n-X_j\|_{*,T} + \|\xi_{ij}^n-\xi_{ij}\|_{*,T} \right) \to 0.
	\end{align*}
	Using the definition of $\nu_i(t)$ in \eqref{eq:no_acceleration_system} and $\nu_i$ in \eqref{eq:no_acc_LLN_nu_i}, we have
	$$\nubar_i^n(t) \to \nu_i(t), \quad \nubar_i^n \to \nu_i \text{ a.s.}$$
	Combining these gives \eqref{eq:no_acc_LLN_nu_i} and \eqref{eq:no_acc_LLN_nu_i_t}.
	The last two statements \eqref{eq:no_acc_LLN_mu_i} and \eqref{eq:no_acc_LLN_mu_i_t} follow immediately from \eqref{eq:no_acc_LLN_nu_i} and \eqref{eq:no_acc_LLN_nu_i_t}, respectively.
\end{proof}

\subsection{Proofs of Theorems \ref{thm:acc_LLN} and \ref{thm:comparison}}
\label{sec:pf-acc}

\begin{proof}[Proof of Theorem \ref{thm:acc_LLN}]	
	(a) Since the limiting system is McKean--Vlasov, the proof of existence and uniqueness is standard (cf.\ \cite[Chapter 1]{Sznitman1991}, see also \cite[Theorem 2.1]{Graham1992Mckean}) and hence omitted.

	(b) It would be helpful to freeze the slow components $X$ and analyze the averaging effect of the fast component $\xi$ first.
	Consider the following auxiliary process with $\Delta=\Delta(n)\to 0$ (whose precise value will be stated later):
	\begin{align*}
		X_i^{\Delta}(t) & = X_i\left(\lfl \frac{t}{\Delta} \rfl\Delta\right), \\
		\xi_{ij}^{n,\Delta}(t) & = \xi_{ij}^n\left(\lfl \frac{t}{\Delta} \rfl\Delta\right) + \int_{[\lfl \frac{t}{\Delta} \rfl\Delta,t]\times\Zmb\times\Rmb_+} y\one_{[0,\beta(n)\Gammatil(y,\xi_{ij}^{n,\Delta}(s-),X_i^{\Delta}(s-),X_j^{\Delta}(s-))]}(z) \, N_{ij}(ds\,dy\,dz).
	\end{align*}	
	Note that for each $t \in [0,T]$,
	\begin{align}
		\Emb \sup_{s \in [\lfl \frac{t}{\Delta} \rfl\Delta, t]} |X_i^{\Delta}(s)-X_i(s)| 
		& \le \Emb \int_{[\lfl \frac{t}{\Delta} \rfl\Delta, t]\times\Zmb\times\Rmb_+} |y|\one_{[0,\Gamma(y,X_i(s-),\nu_i(s-))]}(z) \, N_i(ds\,dy\,dz) \notag \\
		& \le \Delta \int_{\Rmb_+} |y|\gamma_y \,\measurea(dy) = C_\gamma \Delta \label{eq:acc_aux_1}
	\end{align}
	by Condition \ref{cond:no_acceleration}(i).
	From this we have
	\begin{align}
		\Pmb(\xi_{ij}^{n,\Delta}(t) \ne \xi_{ij}^n(t)) 
		& \le \Pmb((X_i^{\Delta}(s),X_j^{\Delta}(s)) \ne (X_i^n(s),X_j^n(s)) \text{ for some } s \in [\lfl \frac{t}{\Delta} \rfl\Delta, t]) \notag \\
		& \le 2\Emb \sup_{s \in [\lfl \frac{t}{\Delta} \rfl\Delta, t]} |X_i^{\Delta}(s)-X_i(s)| + 2\Emb \sup_{s \in [\lfl \frac{t}{\Delta} \rfl\Delta, t]} |X_i^{n}(s)-X_i(s)| \notag \\
		& \le 2C_\gamma \Delta + 2\Emb \|X_i^n-X_i\|_{*,t}. \label{eq:acc_aux_2}
	\end{align}
	Also note that from Condition \ref{cond:no_acceleration}(i) and Condition \ref{cond:acceleration}(ii) we have
	\begin{equation}
		\label{eq:moment-bd}
		\Emb \|X_i^{\Delta}\|_{*,T}^2 \le \Emb \|X_i\|_{*,T}^2 \le \kappa_1 (\Emb |X_i(0)|^2 + C_\gamma^2 + C_{\gamma,2}) < \infty.
	\end{equation}
	Let $$Q_{i,j}^{n,\Delta,y}(s) := \gamma(y,X_i^{\Delta}(s),X_j^{\Delta}(s),\xi_{ij}^{n,\Delta}(s)) - \int_\Zmb \gamma(y,X_i^{\Delta}(s),X_j^{\Delta}(s),\xitil) \,Q(X_i^{\Delta}(s),X_j^{\Delta}(s),d\xitil).$$
	Given $X_i\left(\lfl \frac{s}{\Delta} \rfl\Delta\right), X_j\left(\lfl \frac{s}{\Delta} \rfl\Delta\right), X_k\left(\lfl \frac{s}{\Delta} \rfl\Delta\right), \xi_{ij}^n\left(\lfl \frac{s}{\Delta} \rfl\Delta\right), \xi_{ik}^n\left(\lfl \frac{s}{\Delta} \rfl\Delta\right)$, clearly $Q_{i,j}^{n,\Delta,y}(s)$ and $Q_{i,k}^{n,\Delta,y}(s)$ are conditionally independent.
	Using this, \eqref{eq:moment-bd}, Condition \ref{cond:acceleration}(i), and an application of time change $s \mapsto \frac{s}{\beta(n)}$ (so that the evolution of $\xi_{ij}^{n,\Delta}$ matches that of $\Ytil$ introduced in Condition \ref{cond:acceleration}) we have
	\begin{align*}
		& \Emb \left[ Q_{i,j}^{n,\Delta,y}(s) Q_{i,k}^{n,\Delta,y}(s) \right] \\
		& = \Emb \left\{ \Emb \left[ Q_{i,j}^{n,\Delta,y}(s) \,\Big|\, X_i\left(\lfl \frac{s}{\Delta} \rfl\Delta\right), X_j\left(\lfl \frac{s}{\Delta} \rfl\Delta\right), X_k\left(\lfl \frac{s}{\Delta} \rfl\Delta\right), \xi_{ij}^n\left(\lfl \frac{s}{\Delta} \rfl\Delta\right), \xi_{ik}^n\left(\lfl \frac{s}{\Delta} \rfl\Delta\right) \right] \right. \\
		& \left. \qquad \cdot \Emb \left[ Q_{i,k}^{n,\Delta,y}(s) \,\Big|\, X_i\left(\lfl \frac{s}{\Delta} \rfl\Delta\right), X_j\left(\lfl \frac{s}{\Delta} \rfl\Delta\right), X_k\left(\lfl \frac{s}{\Delta} \rfl\Delta\right), \xi_{ij}^n\left(\lfl \frac{s}{\Delta} \rfl\Delta\right), \xi_{ik}^n\left(\lfl \frac{s}{\Delta} \rfl\Delta\right) \right] \right\} \\
		& \le \widetilde\kappa^2\left(\beta(n)\left(s-\lfl \frac{s}{\Delta} \rfl\Delta\right)\right) \gamma_y^2 \Emb \left[ \left( 1 + \|X_i^{\Delta}\|_{*,T} + \|X_j^{\Delta}\|_{*,T} \right) \left( 1 + \|X_i^{\Delta}\|_{*,T} + \|X_k^{\Delta}\|_{*,T} \right) \right] \\
		& \le \kappa_2 \gamma_y^2 \widetilde\kappa^2\left(\beta(n)\left(s-\lfl \frac{s}{\Delta} \rfl\Delta\right)\right) 
	\end{align*}
	for each $i,j,k \in [n]$ with $j \ne k$.
	Therefore
	\begin{align*}
		\Emb \left[ \left( \frac{1}{n} \sum_{j=1}^n Q_{i,j}^{n,\Delta,y}(s) \right)^2 \right] & = \frac{1}{n^2} \sum_{j=1}^n \sum_{k \ne j}^n \Emb \left[ Q_{i,j}^{n,\Delta,y}(s) Q_{i,k}^{n,\Delta,y}(s) \right] + \frac{1}{n^2} \sum_{j=1}^n \Emb \left[ \left( Q_{i,j}^{n,\Delta,y}(s) \right)^2 \right] \\
		& \le \kappa_2 \gamma_y^2 \widetilde\kappa^2\left(\beta(n)\left(s-\lfl \frac{s}{\Delta} \rfl\Delta\right)\right) + \frac{4\gamma_y^2}{n},
	\end{align*}
	and hence
	\begin{align}
		\int_{(k-1)\Delta}^{k\Delta} \Emb \left|\frac{1}{n} \sum_{j=1}^n Q_{i,j}^{n,\Delta,y}(s) \right| ds & \le \int_{(k-1)\Delta}^{k\Delta} \left( \sqrt{\kappa_2} \gamma_y \widetilde\kappa\left(\beta(n)\left(s-\lfl \frac{s}{\Delta} \rfl\Delta\right)\right) + \frac{2\gamma_y}{\sqrt{n}} \right) ds \notag \\
		& = \frac{\sqrt{\kappa_2}\gamma_y}{\beta(n)} \int_0^{\beta(n)\Delta} \widetilde\kappa(s)\,ds + \frac{2\Delta\gamma_y}{\sqrt{n}} \notag \\
		& \le \frac{\kappa_3\gamma_y}{\beta(n)} + \frac{2\Delta\gamma_y}{\sqrt{n}} \label{eq:exp_ergodic}
	\end{align}
	for each $k \in \Nmb$.

	Now we show \eqref{eq:acc_LLN1}.
	Note that
	\begin{align*}
		& \Emb \|X_i^n-X_i\|_{*,t} \\
		& \le \Emb \int_{\Xmb_t} |y| \left| \one_{[0,\Gamma(y,X_i^n(s-),\nu_i^n(s-))]}(z) - \one_{[0,\Gamma(y,X_i(s-),\nu_i(s-))]}(z) \right| N_i(ds\,dy\,dz) \\
		& = \Emb \int_{[0,t]\times\Zmb} |y| \left| \Gamma(y,X_i^n(s),\nu_i^n(s)) - \Gamma(y,X_i(s),\nu_i(s)) \right| ds\,\measurea(dy).
	\end{align*}
	Fixing $y \in \Nmb$ and $s \in [0,T]$, we have 
	\begin{align}
		& \Emb |\Gamma(y,X_i^n(s),\nu_i^n(s))-\Gamma(y,X_i(s),\nu_i(s))| \notag \\
		& = \Emb \left| \frac{1}{n} \sum_{j=1}^n \gamma(y,X_i^n(s),X_j^n(s),\xi_{ij}^n(s))-\int_{\Zmb^2} \gamma(y,X_i(s),\xtil,\xitil) \,\mu_s(d\xtil)\,Q(X_i(s),\xtil,d\xitil) \right| \notag \\
		& \le \frac{1}{n} \sum_{j=1}^n \Emb \left| \gamma(y,X_i^n(s),X_j^n(s),\xi_{ij}^n(s)) - \gamma(y,X_i^{\Delta}(s),X_j^{\Delta}(s),\xi_{ij}^{n,\Delta}(s)) \right| \notag \\
		& \quad + \Emb \left| \frac{1}{n} \sum_{j=1}^n \left( \gamma(y,X_i^{\Delta}(s),X_j^{\Delta}(s),\xi_{ij}^{n,\Delta}(s)) - \int_\Zmb \gamma(y,X_i^{\Delta}(s),X_j^{\Delta}(s),\xitil) \,Q(X_i^{\Delta}(s),X_j^{\Delta}(s),d\xitil) \right) \right| \notag \\
		& \quad + \frac{1}{n} \sum_{j=1}^n \Emb \left| \int_\Zmb \gamma(y,X_i^{\Delta}(s),X_j^{\Delta}(s),\xitil) \,Q(X_i^{\Delta}(s),X_j^{\Delta}(s),d\xitil) \right. \notag \\
		& \qquad \left. - \int_\Zmb \gamma(y,X_i(s),X_j(s),\xitil) \,Q(X_i(s),X_j(s),d\xitil) \right| \notag \\
		& \quad + \Emb \left| \frac{1}{n} \sum_{j=1}^n \int_\Zmb \gamma(y,X_i(s),X_j(s),\xitil) \,Q(X_i(s),X_j(s),d\xitil) \right. \notag \\
		& \qquad \left. - \int_{\Zmb^2} \gamma(y,X_i(s),\xtil,\xitil) \,\mu_s(d\xtil)\,Q(X_i(s),\xtil,d\xitil) \right|. \label{eq:acc_pf_triangle}
	\end{align}

	Next we analyze each of the four terms on the RHS of \eqref{eq:acc_pf_triangle}.
	{For the first term, from Condition \ref{cond:no_acceleration}(i) and \eqref{eq:gamma-Lipschitz} we have}
	\begin{align*}
		& \Emb \left| \gamma(y,X_i^n(s),X_j^n(s),\xi_{ij}^n(s)) - \gamma(y,X_i^{\Delta}(s),X_j^{\Delta}(s),\xi_{ij}^{n,\Delta}(s)) \right| \\
		& \le \gamma_y \left( \Emb |X_i^n(s)-X_i(s)| + \Emb |X_i^{\Delta}(s)-X_i(s)| + \Emb |X_j^n(s)-X_j(s)| + \Emb |X_j^{\Delta}(s)-X_j(s)| \right. \\
		& \qquad \left. + \Pmb(\xi_{ij}^n(s) \ne \xi_{ij}^{n,\Delta}(s)) \right) \\
		& \le 4 \gamma_y \Emb \|X_i^n-X_i\|_{*,s} + 4\gamma_y C_\gamma \Delta,
	\end{align*}
	{where the last line uses the exchangeability of $\{(X_j^n,X_j^{\Delta},X_j) : j \in \Nmb\}$, \eqref{eq:acc_aux_1} and \eqref{eq:acc_aux_2}.}
	For the second term, {by the assumption $0 \le \gamma(y,\cdot) \le \gamma_y$ and using} \eqref{eq:exp_ergodic} we have
	\begin{align*}
		& \int_0^t \Emb \left| \frac{1}{n} \sum_{j=1}^n \left( \gamma(y,X_i^{\Delta}(s),X_j^{\Delta}(s),\xi_{ij}^{n,\Delta}(s)) - \int_\Zmb \gamma(y,X_i^{\Delta}(s),X_j^{\Delta}(s),\xitil) \,Q(X_i^{\Delta}(s),X_j^{\Delta}(s),d\xitil) \right) \right| ds \\
		& \le \sum_{k=1}^{\lfl \frac{t}{\Delta} \rfl} \int_{(k-1)\Delta}^{k\Delta} \Emb \left| \frac{1}{n} \sum_{j=1}^n Q_{i,j}^{n,\Delta,y}(s) \right| ds + \Delta \gamma_y \\
		& \le \kappa_3\frac{\gamma_yt}{\beta(n)\Delta} + \frac{2\gamma_yt}{\sqrt{n}} + \Delta \gamma_y.
	\end{align*}
	For the third term, {from Condition \ref{cond:no_acceleration}(i) we have
	$$\left| \int_\Zmb \gamma(y,x_1,x_2,\xitil) \,Q(x_1,x_2,d\xitil) - \int_\Zmb \gamma(y,x_1',x_2',\xitil) \,Q(x_1',x_2',d\xitil) \right| \le \gamma_y \one_{\{(x_1,x_2) \ne (x_1',x_2')\}}$$
	for each $y,x_1,x_2,x_1',x_2'\in\Zmb$, and hence}
	\begin{align*}
		& \Emb \left| \int_\Zmb \gamma(y,X_i^{\Delta}(s),X_j^{\Delta}(s),\xitil) \,Q(X_i^{\Delta}(s),X_j^{\Delta}(s),d\xitil) - \int_\Zmb \gamma(y,X_i(s),X_j(s),\xitil) \,Q(X_i(s),X_j(s),d\xitil) \right| \\
		& \le \gamma_y \left( \Pmb(X_i^{\Delta}(s) \ne X_i(s)) + \Pmb(X_j^{\Delta}(s) \ne X_j(s)) \right) \le 2\gamma_y C_\gamma \Delta,
	\end{align*}
	where the last inequality follows from \eqref{eq:acc_aux_1}.
	For the fourth term, since $X_j$ are i.i.d.\ with law $\mu$, from Lemma \ref{lem:iid-moment} we have
	\begin{align*}
		& \Emb \left| \frac{1}{n} \sum_{j=1}^n \int_\Zmb \gamma(y,X_i(s),X_j(s),\xitil) \,Q(X_i(s),X_j(s),d\xitil) - \int_{\Zmb^2} \gamma(y,X_i(s),\xtil,\xitil) \,\mu_s(d\xtil)\,Q(X_i(s),\xtil,d\xitil) \right| \\
		& \le \frac{\kappa_4\gamma_y}{\sqrt{n}}.
	\end{align*}
	
	Combining all of the above estimates and using Condition \ref{cond:no_acceleration}(i) gives
	\begin{equation*}
		\Emb \|X_i^n-X_i\|_{*,t} \le 4C_\gamma \int_0^t \Emb \|X_i^n-X_i\|_{*,s} \,ds + \kappa_3 \frac{C_\gamma t}{\beta(n) \Delta} + C_\gamma \Delta + 6 C_\gamma^2 t \Delta + \frac{(\kappa_4+2)C_\gamma t}{\sqrt{n}}.
	\end{equation*}
	Using Gronwall's inequality and taking $\Delta=\Delta(n) = \frac{1}{\sqrt{\beta(n)}}$, we have \eqref{eq:acc_LLN1}.
	
	(c) Using (b), the independence of $\{X_i\}$ and a standard argument (see \cite[Chapter 1]{Sznitman1991}) one has \eqref{eq:acceleration_POC}.
	
	(d) Fix $t \in [0,T]$.
	Let 
	$$\mubar^n := \frac{1}{n} \sum_{i=1}^n \delta_{X_i(\cdot)}, \quad \mubar^n(t) := \frac{1}{n} \sum_{i=1}^n \delta_{X_i(t)}.$$
	From \eqref{eq:acc_LLN1} we have
	\begin{align*}
		\Emb\dBL(\mu^n(t),\mubar^n(t)) & \le \frac{1}{n} \sum_{i=1}^n \Emb |X_i^n(t)-X_i(t)| \to 0, \\
		\Emb\dBL(\mu^n,\mubar^n)  &\le \frac{1}{n} \sum_{i=1}^n \Emb \|X_i^n-X_i\|_{*,T} \to 0.
	\end{align*}
	Also note that
	$$\mubar^n(t) \to \mu(t), \quad \mubar^n \to \mu,$$
	in probability by independence of $X_i$.
	Combining these gives \eqref{eq:acc_LLN2}.

	(e) Fix $t \in (0,T]$.
	Take $\Delta=\Delta(n)=O(\frac{1}{\sqrt{\beta(n)}})$ such that $(k+\frac{1}{2})\Delta \le t < (k+1)\Delta$ for some $k=k(n) \in \Nmb$.
	Define
	\begin{align*}
		\nu_i^{n,\Delta,1}(t) & =\frac{1}{n} \sum_{j=1}^n \delta_{(X_j^{\Delta}(t),\xi_{ij}^{n,\Delta}(t))}, \\
		\nu_i^{n,\Delta,2}(t) & =\frac{1}{n} \sum_{j=1}^n \delta_{X_j^{\Delta}(t)} \otimes \Lmc(\xi_{ij}^{n,\Delta}(t) \,|\, \xi_{ij}^n(k\Delta),X_i^{\Delta}(t),X_j^{\Delta}(t)), \\
		\nu_i^{n,\Delta,3}(t) & =\frac{1}{n} \sum_{j=1}^n \delta_{X_j^{\Delta}(t)} \otimes Q(X_i^{\Delta}(t),X_j^{\Delta}(t),\cdot), \\
		\nu_i^{n,\Delta,4}(t) & =\frac{1}{n} \sum_{j=1}^n \delta_{X_j(t)} \otimes Q(X_i(t),X_j(t),\cdot).
	\end{align*}
	From \eqref{eq:acc_LLN1}, \eqref{eq:acc_aux_1} and \eqref{eq:acc_aux_2} we have
	\begin{align*}
		\Emb \dBL(\nu_i^n(t),\nu_i^{n,\Delta,1}(t)) & \le \frac{1}{n} \sum_{j=1}^n \sup_{f \in \Bmb_1} \left| f(X_j^n(t),\xi_{ij}^n(t)) - f(X_j^{\Delta}(t),\xi_{ij}^{n,\Delta}(t)) \right| \\
		& \le \frac{1}{n} \sum_{j=1}^n \left( \Emb|X_j^n(t)-X_j^{\Delta}(t)| + 2\Pmb(\xi_{ij}^n(t) \ne \xi_{ij}^{n,\Delta}(t)) \right) \to 0, \\
		\Emb \dBL(\nu_i^{n,\Delta,3}(t),\nu_i^{n,\Delta,4}(t)) & \le \frac{1}{n} \sum_{j=1}^n \Pmb(X_j^\Delta(t) \ne X_j(t)) \to 0.
	\end{align*}
	Using the conditional independence of $\{\xi_{ij}^{n,\Delta}(t) : j \in [n]\}$ given $\{X_j^\Delta(t) : j \in [n]\}$, we have
	\begin{align*}
		& \Emb \left[ \left( \lan f,\nu_i^{n,\Delta,1}(t) \ran - \lan f,\nu_i^{n,\Delta,2}(t) \ran \right)^2 \right] \\
		& = \frac{1}{n^2} \sum_{j=1}^n \Emb \left[ \left( f(X_j^{\Delta}(t),\xi_{ij}^{n,\Delta}(t)) - \lan f,\delta_{X_j^{\Delta}(t)} \otimes \Lmc(\xi_{ij}^{n,\Delta}(t) \,|\, \xi_{ij}^n(k\Delta),X_i^{\Delta}(t),X_j^{\Delta}(t)) \ran \right)^2 \right] \\
		& \le \frac{\|f\|_\infty^2}{n} \to 0
	\end{align*}
	for each bounded and continuous function $f$.
	Similarly, using the independence of $\{X_i : i \in [n]\}$ and the weak law of large numbers we have 
	$$\nu_i^{n,\Delta,4}(t) \to \nu_i(t)$$
	in probability.
	Lastly, from \eqref{eq:acc_asump_addition} and the definition of $\xi_{ij}^{n,\Delta}$ we have
	\begin{align*}
		& \Emb \left| \lan f,\nu_i^{n,\Delta,2}(t) \ran - \lan f,\nu_i^{n,\Delta,3}(t) \ran \right| \\
		&  = \Emb \left| \frac{1}{n} \sum_{j=1}^n \left\lan f,\delta_{X_j^{\Delta}(t)} \otimes \Lmc(\xi_{ij}^{n,\Delta}(t) \,|\, \xi_{ij}^n(k\Delta),X_i^{\Delta}(t),X_j^{\Delta}(t)) - \delta_{X_j^{\Delta}(t)} \otimes Q(X_i^{\Delta}(t),X_j^{\Delta}(t),\cdot) \right\ran \right| \\
		& \le \frac{\|f\|_\infty}{n} \sum_{j=1}^n \Emb \left[ \sum_{\xitil \in \Zmb} \left|\Pmb(\xi_{ij}^{n,\Delta}(t)=\xitil \,|\, \xi_{ij}^n(k\Delta),X_i^{\Delta}(t),X_j^{\Delta}(t)) - Q(X_i^{\Delta}(t),X_j^{\Delta}(t),\{\xitil\})\right| \right] \\
		& \le \frac{\|f\|_\infty}{n} \sum_{j=1}^n \Emb \left[ C(\beta(n)(t-k\Delta)) (1+|X_i^{\Delta}(t)|+|X_j^{\Delta}(t)|) \right] \to 0 \\
		& \le \kappa_5 C(\beta(n)(t-k\Delta)) \to 0
	\end{align*}
	for each bounded and continuous function $f$.
	Combining above estimates gives \eqref{eq:acc_LLN3}.
	Finally, \eqref{eq:acc_LLN4} is a direct consequence of \eqref{eq:acc_LLN3}.
\end{proof}

\begin{Remark}
	\label{rmk:kappa}
	If we are only interested in the convergence of $X_i^n, \mu^n, \nu_i^n$, then it would be sufficient to assume that $\lim_{t \to \infty} \frac{1}{t}\int_0^t\widetilde\kappa(s)\,ds = 0$, instead of $\int_0^\infty \widetilde\kappa(t)\,dt < \infty$.
	We only have to replace $\kappa_3$ by $\kappa_3 \int_0^{\beta(n)\Delta} \widetilde\kappa(s)\,ds$ in \eqref{eq:exp_ergodic} and in what follows.
\end{Remark}

\begin{proof}[Proof of Theorem \ref{thm:comparison}]
	The proof is quite similar to that of Theorem \ref{thm:acc_LLN}, except that $\nu_i^n$ there is replaced by $\nu_i^\beta$ here.
	So we will omit certain common arguments.
	
	(a) First consider the following auxiliary process with $\Delta=\Delta(\beta) \to 0$ (whose precise value will be stated later):
	\begin{align*}
		X_i^{\Delta}(t) & = X_i\left(\lfl \frac{t}{\Delta} \rfl\Delta\right), \\
		\xi_{ij}^{\beta,\Delta}(t) & = \xi_{ij}^\beta\left(\lfl \frac{t}{\Delta} \rfl\Delta\right) + \int_{[\lfl \frac{t}{\Delta} \rfl\Delta,t]\times\Zmb\times\Rmb_+} y\one_{[0,\beta\Gammatil(y,\xi_{ij}^{\beta,\Delta}(s-),X_i^{\Delta}(s-),X_j^{\Delta}(s-))]}(z) \, N_{ij}(ds\,dy\,dz).
	\end{align*}
	Using Condition \ref{cond:no_acceleration}(i), one can check that 
	\begin{align}
		& \Emb \sup_{s \in [\lfl \frac{t}{\Delta} \rfl\Delta, t]} |X_i^{\Delta}(s)-X_i(s)| 
		\le C_\gamma \Delta, \label{eq:comparison_aux_1} \\
		& \Pmb(\xi_{ij}^{\beta,\Delta}(t) \ne \xi_{ij}^\beta(t)) 
		\le 2C_\gamma \Delta + 2\Emb \|X_i^\beta-X_i\|_{*,t}. \label{eq:comparison_aux_2}
	\end{align}
	From this, \eqref{eq:moment-bd}, Condition \ref{cond:acceleration}(i) and an application of time change $s \mapsto \frac{s}{\beta}$ (so that the evolution of $\xi_{ij}^{\beta,\Delta}$ matches that of $\Ytil$) we have
	\begin{align}
		& \int_{(k-1)\Delta}^{k\Delta} \Emb \left| \frac{1}{n} \sum_{j=1}^n \left( \gamma(y,X_i^{\Delta}(s),X_j^{\Delta}(s),\xi_{ij}^{\beta,\Delta}(s)) \right. \right. \notag \\
		& \left. \left. \quad - \int_\Zmb \gamma(y,X_i^{\Delta}(s),X_j^{\Delta}(s),\xitil) \,Q(X_i^{\Delta}(s),X_j^{\Delta}(s),d\xitil) \right) \right| ds \le  \frac{\kappa_1\gamma_y}{\beta} + \frac{2\Delta\gamma_y}{\sqrt{n}} \label{eq:comparison_exp_ergodic}
	\end{align}
	for each $k \in \Nmb$.		
	
	Now we show \eqref{eq:comparison_1}.
	Since $X_i$ are i.i.d.\ with law $\mu$, {it follows from \eqref{eq:acceleration_system} and the law of large numbers that}
	$$\nu_i(t)(d\xtil\,d\xitil)=\lim_{n \to \infty} \frac{1}{n} \sum_{j=1}^n \delta_{X_j(t)}(d\xtil) \otimes Q(X_i(t),X_j(t),d\xitil).$$
	Hence
	\begin{align*}
		\Emb \|X_i^\beta-X_i\|_{*,t}
		& \le \Emb \int_{[0,t]\times\Zmb} |y| \left| \Gamma(y,X_i^\beta(s),\nu_i^\beta(s)) - \Gamma(y,X_i(s),\nu_i(s)) \right| ds\,\measurea(dy) \\
		& \le \int_{[0,t]\times\Zmb} |y| \lim_{n \to \infty} \Emb \left| \frac{1}{n} \sum_{j=1}^n \left( \gamma(y,X_i^\beta(s),X_j^\beta(s),\xi_{ij}^\beta(s)) \right. \right. \\
		& \qquad \left. \left. - \int_\Zmb \gamma(y,X_i(s),X_j(s),\xitil) \,Q(X_i(s),X_j(s),d\xitil) \right) \right| ds\,\measurea(dy),
	\end{align*}
	where the last line uses the dominated convergence theorem.
	For each $y \in \Zmb$, $j \in \Nmb$ and $s \in [0,T]$,
	\begin{align*}
		& \Emb \left| \frac{1}{n} \sum_{j=1}^n \left( \gamma(y,X_i^\beta(s),X_j^\beta(s),\xi_{ij}^\beta(s)) - \int_\Zmb \gamma(y,X_i(s),X_j(s),\xitil) \,Q(X_i(s),X_j(s),d\xitil) \right) \right| \\
		& \le \frac{1}{n} \sum_{j=1}^n \Emb \left| \gamma(y,X_i^\beta(s),X_j^\beta(s),\xi_{ij}^\beta(s)) - \gamma(y,X_i^{\Delta}(s),X_j^{\Delta}(s),\xi_{ij}^{\beta,\Delta}(s)) \right| \\
		& \quad + \Emb \left| \frac{1}{n} \sum_{j=1}^n \left( \gamma(y,X_i^{\Delta}(s),X_j^{\Delta}(s),\xi_{ij}^{\beta,\Delta}(s)) - \int_\Zmb \gamma(y,X_i^{\Delta}(s),X_j^{\Delta}(s),\xitil) \, Q(X_i^{\Delta}(s),X_j^{\Delta}(s),d\xitil) \right) \right| \\
		& \quad + \frac{1}{n} \sum_{j=1}^n \Emb \left| \int_\Zmb \gamma(y,X_i^{\Delta}(s),X_j^{\Delta}(s),\xitil) \, Q(X_i^{\Delta}(s),X_j^{\Delta}(s),d\xitil) \right. \\
		& \qquad \left. - \int_\Zmb \gamma(y,X_i(s),X_j(s),\xitil) \, Q(X_i(s),X_j(s),d\xitil) \right|.
	\end{align*}	
	Similar to the analysis of first three terms on the RHS of \eqref{eq:acc_pf_triangle}, it follows from Condition \ref{cond:no_acceleration}(i), \eqref{eq:comparison_aux_1}, \eqref{eq:comparison_aux_2} and \eqref{eq:comparison_exp_ergodic} that	
	\begin{align*}
		& \Emb \left| \gamma(y,X_i^\beta(s),X_j^\beta(s),\xi_{ij}^\beta(s)) - \gamma(y,X_i^{\Delta}(s),X_j^{\Delta}(s),\xi_{ij}^{\beta,\Delta}(s)) \right| \\
		& \qquad \le 4\gamma_y \Emb \|X_i^\beta-X_i\|_{*,s} + 4\gamma_y C_\gamma \Delta, \\
		& \int_0^t \Emb \left| \frac{1}{n} \sum_{j=1}^n \left( \gamma(y,X_i^{\Delta}(s),X_j^{\Delta}(s),\xi_{ij}^{\beta,\Delta}(s)) - \int_\Zmb \gamma(y,X_i^{\Delta}(s),X_j^{\Delta}(s),\xitil) \, Q(X_i^{\Delta}(s),X_j^{\Delta}(s),d\xitil) \right) \right| ds \\
		& \qquad \le \kappa_1\frac{\gamma_yt}{\beta\Delta} + \frac{2\gamma_yt}{\sqrt{n}} + \Delta \gamma_y, \\
		& \Emb \left| \int_\Zmb \gamma(y,X_i^{\Delta}(s),X_j^{\Delta}(s),\xitil) \, Q(X_i^{\Delta}(s),X_j^{\Delta}(s),d\xitil) - \int_\Zmb \gamma(y,X_i(s),X_j(s),\xitil) \, Q(X_i(s),X_j(s),d\xitil) \right| \\
		& \qquad \le 2\gamma_y C_\gamma \Delta.
	\end{align*}

	Combining all of the above estimates and using Condition \ref{cond:no_acceleration}(i) gives
	\begin{equation*}
		\Emb \|X_i^\beta-X_i\|_{*,t} \le 4C_\gamma \int_0^t \Emb \|X_i^\beta-X_i\|_{*,s} \,ds + \kappa_1 \frac{C_\gamma t}{\beta\Delta} + C_\gamma \Delta + 6 C_\gamma^2 t \Delta.
	\end{equation*}
	Using Gronwall's inequality and taking $\Delta=\Delta(\beta) = \frac{1}{\sqrt{\beta}}$ we have \eqref{eq:comparison_1}.

	Using the definition of $\dBL$ and \eqref{eq:comparison_1}, we have
	\begin{align*}
		\dBL(\mu^\beta,\mu) & = \sup_{f \in \Bmb_1} \left| \Emb f(X_i^\beta) - \Emb f(X_i) \right| \le \Emb \|X_i^\beta-X_i\|_{*,T} \le \frac{\kappa}{\sqrt{\beta}}.
	\end{align*}
	This gives \eqref{eq:comparison_2}.

	(b) 
	Fix $t \in (0,T]$.
	Take $\Delta=\Delta(\beta)=O(\frac{1}{\sqrt{\beta}})$ such that $(k+\frac{1}{2})\Delta \le t < (k+1)\Delta$ for some $k=k(\beta) \in \Nmb$.
	Define
	\begin{align*}
		\nu_i^{\beta,\Delta,1}(t) & = \lim_{n \to \infty} \frac{1}{n} \sum_{j=1}^n \delta_{(X_j^{\Delta}(t),\xi_{ij}^{\beta,\Delta}(t))}, \\
		\nu_i^{\beta,\Delta,2}(t) & = \lim_{n \to \infty} \frac{1}{n} \sum_{j=1}^n \delta_{X_j^{\Delta}(t)} \otimes \Lmc(\xi_{ij}^{\beta,\Delta}(t) \,|\, \xi_{ij}^{\beta}(k\Delta),X_i^{\Delta}(t),X_j^{\Delta}(t)), \\
		\nu_i^{\beta,\Delta,3}(t) & = \lim_{n \to \infty} \frac{1}{n} \sum_{j=1}^n \delta_{X_j^{\Delta}(t)} \otimes Q(X_i^{\Delta}(t),X_j^{\Delta}(t),\cdot), \\
		\nu_i^{\beta,\Delta,4}(t) & = \lim_{n \to \infty} \frac{1}{n} \sum_{j=1}^n \delta_{X_j(t)} \otimes Q(X_i(t),X_j(t),\cdot).
	\end{align*}
	From \eqref{eq:comparison_1}, \eqref{eq:comparison_aux_1} and \eqref{eq:comparison_aux_2} we have
	\begin{align*}
		\Emb \dBL(\nu_i^\beta(t),\nu_i^{\beta,\Delta,1}(t)) & \le \liminf_{n \to \infty} \frac{1}{n} \sum_{j=1}^n \left( \Emb|X_j^\beta(t)-X_j^{\Delta}(t)| +2\Pmb(\xi_{ij}^\beta(t) \ne \xi_{ij}^{\beta,\Delta}(t)) \right) \to 0, \\
		\Emb \dBL(\nu_i^{\beta,\Delta,3}(t),\nu_i^{\beta,\Delta,4}(t)) & \le \liminf_{n \to \infty} \frac{1}{n} \sum_{j=1}^n \Pmb(X_j^\Delta(t) \ne X_j(t)) \to 0,
	\end{align*}
	as $\beta \to \infty$.
	Using the conditional independence of $\{\xi_{ij}^{\beta,\Delta}(t) : j \in \Nmb\}$ given $\{X_j^\Delta(t) : j \in \Nmb\}$, we have
	\begin{align*}
		& \Emb \left[ \left( \lan f,\nu_i^{\beta,\Delta,1}(t) \ran - \lan f,\nu_i^{\beta,\Delta,2}(t) \ran \right)^2 \right] \\
		& \le \liminf_{n \to \infty} \frac{1}{n^2} \sum_{j=1}^n \Emb \left[ \left( f(X_j^{\Delta}(t),\xi_{ij}^{\beta,\Delta}(t)) - \lan f,\delta_{X_j^{\Delta}(t)} \otimes \Lmc(\xi_{ij}^{\beta,\Delta}(t) \,|\, \xi_{ij}^\beta(k\Delta),X_i^{\Delta}(t),X_j^{\Delta}(t)) \ran \right)^2 \right] \\
		& = 0
	\end{align*}
	for each bounded and continuous function $f$.
	This means $$\nu_i^{\beta,\Delta,1}(t) = \nu_i^{\beta,\Delta,2}(t) \text{ a.s.}$$
	Similarly, using the independence of $\{X_i : i \in \Nmb\}$ and the weak law of large numbers we have 
	$$\nu_i^{\beta,\Delta,4}(t) = \nu_i(t) \text{ a.s.}$$
	Moreover, from \eqref{eq:acc_asump_addition} and the definition of $\xi_{ij}^{\beta,\Delta}$ we have
	\begin{align*}
		& \Emb \left| \lan f,\nu_i^{\beta,\Delta,2}(t) \ran - \lan f,\nu_i^{\beta,\Delta,3}(t) \ran \right| \\
		& \le \liminf_{n \to \infty} \frac{2\|f\|_\infty}{n} \sum_{j=1}^n \Emb \left[ \sum_{\xitil \in \Zmb} \left|\Pmb(\xi_{ij}^{\beta,\Delta}(t)=\xitil \,|\, \xi_{ij}^\beta(k\Delta),X_i^{\Delta}(t),X_j^{\Delta}(t)) - Q(X_i^{\Delta}(t),X_j^{\Delta}(t),\{\xitil\})\right| \right] \\
		& \le \liminf_{n \to \infty} \frac{2\|f\|_\infty}{n} \sum_{j=1}^n \Emb \left[ C((t-k\Delta)\beta) (1+|X_i^{\Delta}(t)|+|X_j^{\Delta}(t)|) \right] \to 0 \\
		& \le \kappa_2 C((t-k\Delta)\beta) \to 0
	\end{align*}
	as $\beta \to \infty$, for each bounded and continuous function $f$.	
	Combining above estimates gives \eqref{eq:comparison_3}.
	Finally, \eqref{eq:comparison_4} is a direct consequence of \eqref{eq:comparison_3}.
\end{proof}

\begin{Remark}
	\label{rmk:kappa-acc}
	If we are only interested in the convergence of $X_i^\beta, \mu^\beta, \nu_i^\beta$, then it would be sufficient to assume that $\lim_{t \to \infty} \frac{1}{t} \int_0^t \widetilde\kappa(s)\,ds = 0$, instead of $\int_0^\infty \widetilde\kappa(t)\,dt < \infty$.
	We only have to replace $\kappa_1$ by $\kappa_1 \int_0^{\beta\Delta} \widetilde\kappa(s)\,ds$ in \eqref{eq:comparison_exp_ergodic} and in what follows.
\end{Remark}

\section{Proofs of central limit theorems}
\label{sec:pf-CLT}

In this section we prove Theorems \ref{thm:CLT_nu} and \ref{thm:CLT_mu}.

\subsection{Asymptotics of symmetric statistics}
\label{sec:Dynkin}

The proof of CLT crucially relies on certain classical results from \cite{Dynkin1983} on limit laws of degenerate symmetric statistics.
In this section we briefly review these results.

Let $\Smb$ be a Polish space and let $\{Y_n\}_{n=1}^\infty$ be a sequence of i.i.d.\ $\Smb$-valued random variables having common probability law $\thetabar$.
For $k \in \Nmb$, let $L^2(\thetabar^{\otimes k})$ be the space of all real-valued square integrable functions on $(\Smb^k, \Bmc(\Smb)^{\otimes k}, \thetabar^{\otimes k})$.
Denote by $L^2_c(\thetabar^{\otimes k})$ the subspace of centered functions, namely $\phi \in L^2(\thetabar^{\otimes k})$ such that for all $1 \le j \le k$,
\begin{equation*}
	\int_\Smb \phi(x_1,\dotsc,x_{j-1},x,x_{j+1},\dotsc,x_k) \, \thetabar(dx) = 0, \quad \thetabar^{\otimes k-1} \text{ a.e. } (x_1,\dotsc,x_{j-1},x_{j+1},\dotsc,x_k).
\end{equation*}
Denote by $L^2_{sym}(\thetabar^{\otimes k})$ the subspace of symmetric functions, namely $\phi \in L^2(\thetabar^{\otimes k})$ such that for every permutation $\pi$ on $\{1,\dotsc,k\}$,
\begin{equation*}
	\phi(x_1,\dotsc,x_k) = \phi(x_{\pi(1)},\dotsc,x_{\pi(k)}), \quad \thetabar^{\otimes k} \text{ a.e. } (x_1,\dotsc,x_k).
\end{equation*}
Also, denote by $L^2_{c,sym}(\thetabar^{\otimes k})$ the subspace of centered symmetric functions in $L^2(\thetabar^{\otimes k})$, namely $L^2_{c,sym}(\thetabar^{\otimes k}) := L^2_c(\thetabar^{\otimes k}) \bigcap L^2_{sym}(\thetabar^{\otimes k})$.
Given $\phi_k \in L^2_{sym}(\thetabar^{\otimes k})$ define the symmetric statistic $\Umc^n_k (\phi_k)$ as
\begin{equation*}
	\Umc^n_k (\phi_k) :=
	\begin{cases}
		\displaystyle \sum_{1 \le i_1 < i_2 < \dotsb < i_k \le n} \phi_k(Y_{i_1},\dotsc,Y_{i_k}) & \text{for } n \ge k \\
		0 & \text{for } n<k.
	\end{cases}
\end{equation*}
In order to describe the asymptotic distributions of such statistics consider a Gaussian field $\{I_1(h) : h \in L^2(\thetabar)\}$ such that
\begin{equation*}
	\Ebf \left( I_1(h) \right) = 0, \: \Ebf \left( I_1(h)I_1(g) \right) = \langle h,g \rangle_{L^2(\thetabar)}, \quad h,g \in L^2(\thetabar).
\end{equation*}
For $h \in L^2(\thetabar)$, define $\phi_k^h \in L^2_{sym}(\thetabar^{\otimes k})$ as
\begin{equation*}
	\phi_k^h(x_1,\dotsc,x_k) := h(x_1) \dotsc h(x_k)
\end{equation*}
and set $\phi_0^h := 1$.

The multiple Wiener integral (MWI) of $\phi_k^h$, denoted as $I_k(\phi_k^h)$, is defined through the following formula.
For $k \ge 1$,
\begin{equation*} 
\label{eq:MWI_formula}
	I_k(\phi_k^h) := \sum_{j=0}^{\lfloor k/2 \rfloor} (-1)^j C_{k,j} ||h||^{2j}_{L^2(\thetabar)} (I_1(h))^{k-2j}, \text{ where } C_{k,j} := \frac{k!}{(k-2j)! 2^j j!}, j=0,\dotsc,\lfloor k/2 \rfloor.
\end{equation*}
The following representation gives an equivalent way to characterize the MWI of $\phi_k^h $:
\begin{equation*}
	\sum_{k=0}^{\infty} \frac{t^k}{k!} I_k(\phi_k^h) = \exp \left( tI_1(h) - \frac{t^2}{2} ||h||^2_{L^2(\thetabar)} \right), \quad t \in \Rmb,
\end{equation*}
where we set $I_0(\phi_0^h) := 1$. 
We extend the definition of $I_k$ to the linear span of $\{\phi_k^h, h \in L^2(\thetabar)\}$ by linearity. 
It can be checked that for all $f$ in this linear span,
\begin{equation} \label{eq:MWI_moments}
	\Ebf(I_k(f))^2 = k! \, ||f||^2_{L^2(\thetabar^{\otimes k})}.
\end{equation}
Using this identity and standard denseness arguments, the definition of $I_k(f)$ can be extended to all $f \in L^2_{sym}(\thetabar^{\otimes k})$ and the identity $\eqref{eq:MWI_moments}$ holds for all $f \in L^2_{sym}(\thetabar^{\otimes k})$. 
The following result is taken from \cite{Dynkin1983}.

\begin{Lemma}[Dynkin-Mandelbaum \cite{Dynkin1983}] \label{thm:Dynkin}
	Let $\{ \phi_k \}_{k=1}^\infty$ be such that $\phi_k \in L^2_{c,sym}(\thetabar^{\otimes k})$ for each $k \ge 1$. 
	Then the following convergence holds as $n \to \infty$:
	\begin{equation*}
		\left( n^{-\frac{k}{2}} \Umc^n_k (\phi_k) \right)_{k \ge 1} \Rightarrow \left( \frac{1}{k!} I_k(\phi_k) \right)_{k \ge 1}
	\end{equation*}
	as a sequence of $\Rmb^\infty$-valued random variables.
\end{Lemma}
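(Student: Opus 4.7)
The plan is to reduce the general statement to the special case of product kernels $\phi_k = \phi_k^h$ with centered $h \in L^2(\thetabar)$, establish the convergence there via a generating-function computation that combines the classical CLT and LLN, and then extend by a density argument together with a multivariate CLT. This is the standard route for degenerate symmetric statistics.

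\emph{Reduction to product kernels.} The linear span of $\{\phi_k^h : h \in L^2(\thetabar),\ \lan h,1\ran_{L^2(\thetabar)} = 0\}$ is dense in $L^2_{c,sym}(\thetabar^{\otimes k})$ by a polarization argument within the centered subspace. Combined with the isometry $\Emb (I_k(f))^2 = k!\,\|f\|_{L^2(\thetabar^{\otimes k})}^2$ and the analogous variance bound
\begin{equation*}
\Emb \bigl( n^{-k/2}\Umc^n_k(\phi_k) \bigr)^2 = \tbinom{n}{k} n^{-k}\,\|\phi_k\|_{L^2(\thetabar^{\otimes k})}^2 \le \tfrac{1}{k!} \|\phi_k\|_{L^2(\thetabar^{\otimes k})}^2,
\end{equation*}
valid for centered symmetric $\phi_k$ via the Hoeffding decomposition, it suffices to prove joint convergence for a finite collection of product kernels $\phi_{k_\ell}^{h_\ell}$, $\ell=1,\ldots,L$, with centered $h_\ell \in L^2(\thetabar)$.

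\emph{Product kernel case.} Fix centered $h \in L^2(\thetabar)$, set $\sigma^2 := \|h\|_{L^2(\thetabar)}^2$, $\zeta_i := h(Y_i)$, and $S_n := \sum_{i=1}^n \zeta_i$. Since $\Umc^n_k(\phi_k^h) = e_k(\zeta_1,\ldots,\zeta_n)$ (the $k$-th elementary symmetric polynomial), I introduce the generating function
\begin{equation*}
G_n(t) := \prod_{i=1}^n \Bigl( 1 + \tfrac{t}{\sqrt{n}}\,\zeta_i \Bigr) = \sum_{k=0}^n t^k \bigl( n^{-k/2} \Umc^n_k(\phi_k^h) \bigr).
\end{equation*}
Expanding the logarithm gives $\log G_n(t) = t\,S_n/\sqrt{n} - \tfrac{t^2}{2n}\sum_i \zeta_i^2 + R_n(t)$, where a standard estimate using $\Emb \zeta_1^2 < \infty$ shows that $R_n(t) \to 0$ in probability, uniformly on compact $t$-intervals. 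The CLT and LLN deliver $(S_n/\sqrt{n},\, n^{-1}\sum_i \zeta_i^2) \Rightarrow (I_1(h),\sigma^2)$, hence $G_n(t) \Rightarrow \exp(tI_1(h) - \tfrac{t^2}{2}\sigma^2)$, which by the generating-function identity already recorded in the excerpt equals $\sum_{k \ge 0} (t^k/k!)\,I_k(\phi_k^h)$. Combining convergence of $G_n$ with the uniform $L^2$-control above to extract coefficients (e.g.\ via Cauchy's contour integral, or equivalently by iterating Newton's identity $k e_k = \sum_{j=1}^k (-1)^{j-1} e_{k-j} p_j$ to obtain the Hermite recursion in the limit) yields $n^{-k/2}\Umc^n_k(\phi_k^h) \Rightarrow (1/k!)\,I_k(\phi_k^h)$ jointly in $k$.

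\emph{Joint convergence across kernels and the main obstacle.} For finitely many centered $h_1,\ldots,h_L$, the multivariate CLT gives joint convergence of $(n^{-1/2}\sum_i h_\ell(Y_i))_\ell$ to $(I_1(h_\ell))_\ell$, jointly with the LLN limits of the relevant cross-moment sums; the continuous mapping theorem applied to the polynomial relations above then yields joint convergence across both $\ell$ and $k$. A density/diagonal argument, using the $L^2$-bound displayed earlier to handle tail coefficients and to transfer convergence from product kernels to general elements of $L^2_{c,sym}(\thetabar^{\otimes k})$, completes the proof. I expect the delicate step to be the passage from weak convergence of the generating functions $G_n(t)$ to joint weak convergence of all their coefficients $n^{-k/2}\Umc^n_k(\phi_k^h)$: this requires the a priori $L^2$-bound to prevent mass escaping to high $k$, and matching the limiting coefficients with the MWI hinges on precisely the generating-function identity recorded just before the lemma statement.
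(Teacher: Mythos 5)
The paper does not actually prove this lemma: it is quoted from Dynkin and Mandelbaum \cite{Dynkin1983} as a known result, so there is no in-paper argument to compare yours against; what you have written is a reconstruction of the classical proof of the cited theorem. Your architecture is the standard and correct one: reduction to product kernels $\phi_k^h$ with centered $h$ by polarization (the symmetrization of $h_1\otimes\dotsb\otimes h_k$ with centered $h_i$ lies in the span of $\{(\sum_i\epsilon_i h_i)^{\otimes k}\}$, and these combinations stay centered), the variance identity $\Emb\bigl(n^{-k/2}\Umc^n_k(\psi)\bigr)^2=\binom{n}{k}n^{-k}\|\psi\|^2_{L^2(\thetabar^{\otimes k})}$ coming from orthogonality of terms with distinct index sets (complete degeneracy, no Hoeffding decomposition needed), a multivariate CLT for finitely many centered $h_\ell$, and a convergence-together argument; note also that convergence in $\Rmb^\infty$ is by definition finite-dimensional convergence, so no genuine tail control over $k$ is required.

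Two caveats on the product-kernel step, one minor and one substantive. Minor: for $h$ merely in $L^2(\thetabar)$ the factors $1+t\zeta_i/\sqrt{n}$ need not be positive, so the expansion of $\log G_n(t)$ must be carried out on the event $\max_{i\le n}|\zeta_i|/\sqrt{n}\le\delta$, whose probability tends to one; the cubic remainder is then handled by truncation under the second moment. Substantive: the step you yourself flag, passing from weak convergence of $G_n(t)$ at fixed $t$ to joint weak convergence of its coefficients via Cauchy's formula, does not work as stated — distributional convergence of the real random variables $G_n(t)$ gives no direct control of the joint law of the coefficients, and contour integration of weakly converging random analytic functions would need tightness in a space of analytic functions that you have not established. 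The parenthetical alternative should be promoted to the main argument: Newton's identities $k e_k=\sum_{j=1}^k(-1)^{j-1}e_{k-j}p_j$, applied to $\zeta_i/\sqrt{n}$, express $n^{-k/2}\Umc^n_k(\phi_k^h)$ recursively in terms of $n^{-1/2}\sum_i\zeta_i\Rightarrow I_1(h)$, $n^{-1}\sum_i\zeta_i^2\to\|h\|^2_{L^2(\thetabar)}$, and $n^{-j/2}\sum_i|\zeta_i|^j\to0$ in probability for $j\ge3$ (again by truncation), so induction and Slutsky give joint convergence of all coefficients to the solution of $x_k=\tfrac1k\bigl(x_{k-1}I_1(h)-\|h\|^2_{L^2(\thetabar)}x_{k-2}\bigr)$, which is exactly $\tfrac{1}{k!}I_k(\phi_k^h)$ by the explicit Hermite-type formula for $I_k(\phi_k^h)$ in Section \ref{sec:Dynkin}. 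With that substitution (and the same bookkeeping done jointly over finitely many centered $h_\ell$, the multivariate CLT supplying the correct covariances $\lan h_\ell,h_{\ell'}\ran_{L^2(\thetabar)}$ so the limit is identified with the MWI field), your proof is complete, and the generating function serves only as motivation.
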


\subsection{Girsanov change of measure}

Recall the probability space $(\Omega, \Fmc, \Pmb)$ under which systems \eqref{eq:system} and \eqref{eq:no_acceleration_system} are defined.
Recall the canonical spaces and processes introduced in Section \ref{sec:canonical}.
Let $$\nubar_i^n(t) = \frac{1}{n} \sum_{j=1}^n \delta_{(X_j(t),\xi_{ij}(t))}.$$
Define
\begin{align*}
	J^n(t) & := \sum_{i=1}^n \left( \int_{\Xmb_t} r_{s-}^{n,i}(y,z) N_{i}(ds\,dy\,dz) - \int_{[0,t]\times\Zmb} e_s^{n,i}(y)\,ds\,\measurea(dy) \right),
\end{align*}
where
\begin{align*}
	r_s^{n,i}(y,z) & := \one_{[0,\Gamma(y,X_i(s),\nu_i(s))]}(z) \log \frac{\Gamma(y,X_i(s),\nubar_i^n(s))}{\Gamma(y,X_i(s),\nu_i(s))}, \\
	e_s^{n,i}(y) & := \Gamma(y,X_i(s),\nubar_i^n(s)) - \Gamma(y,X_i(s),\nu_i(s)) = \Gamma(y,X_i(s),\nubar_i^n(s)-\nu_i(s)). 
\end{align*}
Let $\Fmcbar_t^n := \sigma\{N_i(A),X_i[t],\xi_{ij}[t] : i,j \in [n], A \in \Bmc(\Xmb_t)\}$.
{Since $\gamma$ is bounded from above and away from $0$ by Condition \ref{cond:CLT} and $\rho$ is a finite measure, one can check that $\{\exp (J^n(t))\}$ is an $\Fmcbar_t^n$-martingale under $P^n$ (see e.g.\ \cite[Theorem VI.T4]{Bremaud1981point})}.
Define a new probability measure $Q^n$ on $\Omega_n$ by
\begin{equation*}
	\frac{dQ^n}{dP^n} := \exp \left( J^n(T) \right).
\end{equation*}

By Girsanov's Theorem {(see e.g.\ \cite[Theorem VI.T3]{Bremaud1981point} and \cite[Theorem III.3.24]{JacodShiryaev2013limit})}, $\{X_i,\xi_{ij} : i,j \in [n]\}$ has the same probability distribution under $Q^n$ as $\{X_i^n,\xi_{ij}^n : i,j \in [n]\}$ under $\Pmb$.
Let 
\begin{align*}
	\etabar^n(\varphi) & := \sqrt{n} \left( \frac{1}{n} \sum_{j=1}^n \varphi(X_j,\xi_{1j}) - m_\varphi(X_1) \right), \quad \varphi \in \Amc, \\
	\etabar_x^n(\varphi) & := \sqrt{n} \left( \frac{1}{n} \sum_{j=1}^n \varphi(X_j) - \Emb \varphi(X_1) \right), \quad \varphi \in \Amc_x.
\end{align*}
Thus in order to prove Theorems \ref{thm:CLT_nu} and \ref{thm:CLT_mu} it suffices to show that
\begin{align*}
	\lim_{n \to \infty} \Emb_{Q^n} \exp \left( \sqrt{-1} \etabar^n(\varphi) \right) & = \int_{\Omega_0} \exp \left( -\half (\sigma^\varphi_{\omega_0})^2 \right) P_0(d\omega_0), \quad \varphi \in \Amc, \\
	\lim_{n \to \infty} \Emb_{Q^n} \exp \left( \sqrt{-1} \etabar_x^n(\varphi) \right) & = \exp \left( -\half \|(I-A)^{-1} \Phi\|_{\Hmc}^2 \right), \quad \varphi \in \Amc_x,
\end{align*}
which is equivalent to showing
\begin{align}
	\lim_{n \to \infty} \Emb_{P^n} \exp \left( \sqrt{-1} \etabar^n(\varphi) + J^n(T) \right) & = \int_{\Omega_0} \exp \left( -\half (\sigma^\varphi_{\omega_0})^2 \right) P_0(d\omega_0), \quad \varphi \in \Amc, \label{eq:goal_CLT_nu} \\
	\lim_{n \to \infty} \Emb_{P^n} \exp \left( \sqrt{-1} \etabar_x^n(\varphi) + J^n(T) \right) &  = \exp \left( -\half \|(I-A)^{-1} \Phi\|_{\Hmc}^2 \right), \quad \varphi \in \Amc_x \label{eq:goal_CLT_mu}.
\end{align}
For this we will need to study the asymptotics of $J^n$ as $n \to \infty$.

\subsection{Asymptotics of $J^n$}

Now we analyze the asymptotics of $J^n$.
Recall the constant $\varepsilon$ from Condition \ref{cond:CLT}.
From Taylor's expansion, there exists a $\kappa_0 \in (0,\infty)$ such that for all $\alpha,\beta \in [\varepsilon,1/\varepsilon]$,
$$\log \frac{\alpha}{\beta} = (\frac{\alpha}{\beta}-1) - \half (\frac{\alpha}{\beta}-1)^2 + \vartheta(\alpha,\beta) (\frac{\alpha}{\beta}-1)^3,$$
where $|\vartheta(\alpha,\beta)|\le\kappa_0$.
Letting $\vartheta^{n,i}_s(y) := \vartheta(\Gamma(y,X_i(s),\nubar_i^n(s)),\Gamma(y,X_i(s),\nu_i(s)))$, we have
\begin{align*}
	\log \frac{\Gamma(y,X_i(s),\nubar_i^n(s))}{\Gamma(y,X_i(s),\nu_i(s))} & = \left( \frac{\Gamma(y,X_i(s),\nubar_i^n(s))}{\Gamma(y,X_i(s),\nu_i(s))} - 1 \right) - \half \left( \frac{\Gamma(y,X_i(s),\nubar_i^n(s))}{\Gamma(y,X_i(s),\nu_i(s))} - 1 \right)^2 \\
	& \qquad + \vartheta^{n,i}_s(y) \left( \frac{\Gamma(y,X_i(s),\nubar_i^n(s))}{\Gamma(y,X_i(s),\nu_i(s))} - 1 \right)^3.
\end{align*}
Letting $\Ncompensated_i$ be the compensated PRM of $N_i$, we have
\begin{align}
	J^n(T) & = \sum_{i=1}^n \int_{\Xmb_T} r_{s-}^{n,i}(y,z) N_{i}(ds\,dy\,dz) - \sum_{i=1}^n \int_{[0,T]\times\Zmb} e_s^{n,i}(y)\,ds\,\measurea(dy) \notag \\
	& = \sum_{i=1}^n \int_{\Xmb_T} \one_{[0,\Gamma(y,X_i(s-),\nu_i(s-))]}(z) \left( \frac{\Gamma(y,X_i(s-),\nubar_i^n(s-))}{\Gamma(y,X_i(s-),\nu_i(s-))} - 1 \right) \Ncompensated_{i}(ds\,dy\,dz) \notag \\
	& \quad - \half \sum_{i=1}^n \int_{\Xmb_T} \one_{[0,\Gamma(y,X_i(s-),\nu_i(s-))]}(z) \left( \frac{\Gamma(y,X_i(s-),\nubar_i^n(s-))}{\Gamma(y,X_i(s-),\nu_i(s-))} - 1 \right)^2 N_{i}(ds\,dy\,dz) \notag \\
	& \quad + \sum_{i=1}^n \int_{\Xmb_T} \one_{[0,\Gamma(y,X_i(s-),\nu_i(s-))]}(z) \vartheta^{n,i}_{s-}(y) \left( \frac{\Gamma(y,X_i(s-),\nubar_i^n(s-))}{\Gamma(y,X_i(s-),\nu_i(s-))} - 1 \right)^3 N_{i}(ds\,dy\,dz) \notag \\
	& =: J^{n,1} - \half J^{n,2} + J^{n,3}. \label{eq:Jn}
\end{align}

First we analyze $J^{n,3}$.

\begin{Lemma}
	\label{lem:Jn3}
	$\Emb_{P^n} |J^{n,3}| \to 0$ as $n \to \infty$.
\end{Lemma}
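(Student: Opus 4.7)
The plan is to bound the integrand pointwise, exploit the Poisson intensity to pass to a Lebesgue integral, and then use the conditional i.i.d.\ structure from Theorem~\ref{thm:no_acceleration_1}(b) to control the cubic moment of $\Gamma(y,X_i(s),\nubar_i^n(s))-\Gamma(y,X_i(s),\nu_i(s))$ via Lemma~\ref{lem:iid-moment}. Since $|\vartheta^{n,i}_{s-}(y)|\le\kappa_0$ by construction and the integrand is $\Fmcbar_{s-}^n$-predictable, passing to the compensator $ds\,\measurea(dy)\,dz$ in $\Emb_{P^n}$ and performing the $z$-integration produces a factor of $\Gamma(y,X_i(s),\nu_i(s))$, giving
\begin{equation*}
\Emb_{P^n}|J^{n,3}| \le \kappa_0 \sum_{i=1}^n \Emb_{P^n}\int_{[0,T]\times\Zmb} \frac{|\Gamma(y,X_i(s),\nubar_i^n(s))-\Gamma(y,X_i(s),\nu_i(s))|^3}{\Gamma(y,X_i(s),\nu_i(s))^2} \, ds\,\measurea(dy).
\end{equation*}
Condition~\ref{cond:CLT} gives $\Gamma\ge\varepsilon$, so the denominator is bounded below by $\varepsilon^2$.

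The core step is to show that $\Emb_{P^n}|\Gamma(y,X_i(s),\nubar_i^n(s))-\Gamma(y,X_i(s),\nu_i(s))|^3 = O(n^{-3/2})$ uniformly in $y,s,i$. By Theorem~\ref{thm:no_acceleration_1}(b), conditional on $X_i[s]$ the family $\{(X_j(s),\xi_{ij}(s)) : j\in[n],\, j\ne i\}$ is i.i.d.\ with conditional law $\nu_i(s)$. Writing
\begin{equation*}
\Gamma(y,X_i(s),\nubar_i^n(s))-\Gamma(y,X_i(s),\nu_i(s)) = \frac{n-1}{n}\cdot\frac{1}{n-1}\sum_{j\ne i}\bigl[\gamma(y,X_i(s),X_j(s),\xi_{ij}(s))-\Gamma(y,X_i(s),\nu_i(s))\bigr] + \frac{R_{n,i}(s,y)}{n},
\end{equation*}
where the diagonal correction $R_{n,i}(s,y)$ is bounded deterministically by $2/\varepsilon$, and applying Lemma~\ref{lem:iid-moment} with $k=3$ conditionally on $X_i[s]$ (to the bounded function $(\xtil,\xitil)\mapsto \varepsilon\gamma(y,X_i(s),\xtil,\xitil)\in[0,1]$) followed by Minkowski's inequality and the outer expectation, gives the claimed $O(n^{-3/2})$ bound.

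Plugging this back, using that $\measurea$ is a finite measure on $\Zmb$, and summing over $i\in[n]$ yields $\Emb_{P^n}|J^{n,3}| \le \kappa/\sqrt{n} \to 0$. The main technical subtlety is that the test function $\gamma(y,X_i(s),\cdot,\cdot)$ is random through $X_i(s)$, so Lemma~\ref{lem:iid-moment} cannot be invoked directly with a fixed $f$; this is resolved by first freezing $X_i[s]$, where the remaining variables $\{(X_j(s),\xi_{ij}(s)):j\ne i\}$ are genuinely i.i.d.\ and the test function becomes deterministic, applying the lemma to obtain a pathwise bound, and only then taking the outer expectation.
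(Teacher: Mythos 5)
Your proof is correct and follows essentially the same route as the paper: bound $|\vartheta^{n,i}_s(y)|$ by $\kappa_0$, pass to the compensator and use Condition \ref{cond:CLT} to reduce to $\frac{\kappa_0}{\varepsilon^2}\sum_i\Emb\int|\Gamma(y,X_i(s),\nubar_i^n(s)-\nu_i(s))|^3\,ds\,\measurea(dy)$, then get the uniform $O(n^{-3/2})$ cubic-moment bound from the conditional i.i.d.\ property of Theorem \ref{thm:no_acceleration_1}(b) together with Lemma \ref{lem:iid-moment}, yielding $O(n^{-1/2})$ overall. Your extra care with the diagonal $j=i$ term and with freezing $X_i[s]$ before invoking Lemma \ref{lem:iid-moment} is a more explicit rendering of the step the paper states in one line.
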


\begin{proof}
	Since $|\vartheta^{n,i}_s(y)| \le \kappa_0$ and $\Gamma$ is bounded from above and away from $0$ by Condition \ref{cond:CLT}, we have
	\begin{align*}
		\Emb_{P^n} |J^{n,3}| & \le \frac{\kappa_0}{\varepsilon^2} \Emb_{P^n} \sum_{i=1}^n \int_{[0,T]\times\Zmb} \left| \Gamma(y,X_i(s),\nubar_i^n(s)-\nu_i(s)) \right|^3 ds\,\measurea(dy).
	\end{align*}
	Note that
	\begin{align*}
		\max_{i \in [n]} \sup_{y \in \Zmb} \sup_{s \in [0,T]} \Emb_{P^n} \left| \Gamma(y,X_i(s),\nubar_i^n(s)-\nu_i(s)) \right|^3 \le \frac{\kappa_1}{n^{3/2}}
	\end{align*}
	by conditional i.i.d.\ property of $(X_j,\xi_{ij})$ given $X_i$ and Lemma \ref{lem:iid-moment}.
	Therefore
	\begin{equation*}
		\Emb_{P^n} |J^{n,3}| \le \frac{\kappa_2}{n^{1/2}} \to 0
	\end{equation*}
	as $n \to \infty$.
\end{proof}

Before analyzing $J^{n,2}$, let 
\begin{align}
	\gamma_s^{ij,y} & := \gamma(y,X_i(s),X_j(s),\xi_{ij}(s)) - \lan \gamma(y,X_i(s),\cdot,\cdot), \nu_i(s) \ran. \label{eq:gamma-ij}
\end{align}
Note that
\begin{equation}
	\label{eq:gamma_s_property}
	\Emb_{P^n} \left[ \gamma_s^{ij,y} \,\Big|\, X_i \right]=0, \mbox{ for } j \ne i.
\end{equation}
We first show the following estimate.

\begin{Lemma}
	\label{lem:Jn2_prep}
	For each $s \in [0,T]$ and bounded measurable function $f$,
	\begin{align*}
		& \Emb_{P^n} \left| \frac{1}{n^2} \sum_{1 \le i < j < k \le n} \left( \gamma_{s}^{ij,y}\gamma_{s}^{ik,y}f(X_i(s),\nu_i(s)) - \Emb_{P^n} \left[ \gamma_{s}^{ij,y}\gamma_{s}^{ik,y}f(X_i(s),\nu_i(s)) \,\Big|\, X_j,X_k \right] \right) \right|^2 \\
		& \qquad \le \frac{4\|f\|_\infty^2}{\varepsilon^4n}.
	\end{align*}
\end{Lemma}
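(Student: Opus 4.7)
The plan is to expand $\Emb_{P^n}\bigl|\frac{1}{n^2}\sum Z_{ijk}\bigr|^2 = \frac{1}{n^4}\sum_{(i,j,k)}\sum_{(i',j',k')}\Emb_{P^n}[Z_{ijk}Z_{i'j'k'}]$, where
\[
Z_{ijk} := \gamma_{s}^{ij,y}\gamma_{s}^{ik,y}f(X_i(s),\nu_i(s)) - \Emb_{P^n}\!\left[\gamma_{s}^{ij,y}\gamma_{s}^{ik,y}f(X_i(s),\nu_i(s))\,\big|\,X_j,X_k\right],
\]
and to show that all cross terms vanish. Since Condition \ref{cond:CLT} gives $|\gamma_s^{ij,y}|\le 1/\varepsilon$, one has $|Z_{ijk}|\le 2\|f\|_\infty/\varepsilon^2$ and hence $\Emb_{P^n}[Z_{ijk}^2]\le 4\|f\|_\infty^2/\varepsilon^4$. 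The target bound then follows from $\binom{n}{3}/n^4\le 1/n$.

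The heart of the argument is therefore the diagonal identity: $\Emb_{P^n}[Z_{ijk}Z_{i'j'k'}]=0$ whenever $(i,j,k)\ne(i',j',k')$ as ordered triples. Given two such distinct sorted triples, their underlying sets must differ, so one can pick an index $\ell\in\{i,j,k\}\setminus\{i',j',k'\}$ and condition on
\[
\Hmc := \sigma(X_m:m\ne\ell)\vee\sigma(\xi_{ab}:\ell\notin\{a,b\}).
\]
By construction $Z_{i'j'k'}$ is $\Hmc$-measurable, so it suffices to prove $\Emb_{P^n}[Z_{ijk}\mid\Hmc]=0$. The tool is the centering identity $\Emb_{P^n}[\gamma_s^{ij,y}\mid X_i]=0$, which follows from $\nu_i(s)=\Phi_s(X_i[s])$ of Theorem \ref{thm:no_acceleration_1}(b); setting $\psi(x_1,x_2):=\Emb_{P^n}[\gamma_s^{ij,y}\mid X_i=x_1,X_j=x_2]$, this says $\int \psi(x_1,x_2)\mu(dx_2)=0$ for $\mu$-a.e.\ $x_1$.

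Two cases arise. If $\ell\in\{j,k\}$, say $\ell=j$, then $\gamma_s^{ik,y}$ and $\nu_i(s)$ are $\Hmc$-measurable, so integrating $\gamma_s^{ij,y}$ against the marginal law of $X_j$ yields $\Emb_{P^n}[\gamma_s^{ij,y}\mid\Hmc]=0$, which kills $f(X_i(s),\nu_i(s))\gamma_s^{ij,y}\gamma_s^{ik,y}$ in conditional expectation; the centering term $F(X_j,X_k):=\int f(x(s),\Phi_s(x[s]))\psi(x,X_j)\psi(x,X_k)\mu(dx)$ vanishes as well after averaging over $X_j$, via the identity on $\psi$. If $\ell=i$, integrating the product $f(X_i(s),\nu_i(s))\gamma_s^{ij,y}\gamma_s^{ik,y}$ over the independent copy $X_i\sim\mu$ (with $\xi_{ij},\xi_{ik}$ conditionally independent given $X_i,X_j,X_k$) reproduces exactly $F(X_j,X_k)$, which cancels the centering.

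The main technical obstacle lies in carefully verifying the independence structure under $P^n$: that the $X_m$'s are i.i.d., that the driving data $(N_{ab},\xi_{ab}(0))$ are independent across pairs, and that $\xi_{ab}$ is a measurable functional of $(X_a,X_b,N_{ab},\xi_{ab}(0))$, so that after conditioning on all nodes the edges $\{\xi_{ab}\}$ are mutually independent. Once this bookkeeping is in hand, each cancellation is a single application of $\int \psi(x_1,x_2)\mu(dx_2)=0$, and summing the diagonal contribution $\binom{n}{3}\cdot 4\|f\|_\infty^2/\varepsilon^4$ divided by $n^4$ yields the claimed bound.
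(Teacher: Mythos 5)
Your proposal is correct and follows essentially the same route as the paper: expand the second moment, kill every off-diagonal term by a conditioning argument that exploits the i.i.d.\ nodes, the representation $\xi_{ab}=\Psi(\xi_{ab}(0),X_a,X_b,N_{ab})$ with independent driving data, and the centering identity $\Emb_{P^n}[\gamma_s^{ij,y}\,|\,X_i]=0$, then bound the $O(n^3)$ diagonal terms by $4\|f\|_\infty^2/\varepsilon^4$ via Condition \ref{cond:CLT}. The only difference is organizational: you condition once on the $\sigma$-field generated by everything not involving a single distinguishing index $\ell$ (splitting only into $\ell\in\{j,k\}$ versus $\ell=i$), whereas the paper runs the ordered case analysis $k<\ktil$, $k=\ktil,\ j<\jtil$, $k=\ktil,\ j=\jtil,\ i<\itil$ with explicitly listed conditioning variables — the cancellation mechanism is identical.
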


\begin{proof}
	We claim that for $i<j<k$ and $\itil<\jtil<\ktil$,
	\begin{align}
		& \Emb_{P^n} \left[ \left( \gamma_{s}^{ij,y}\gamma_{s}^{ik,y}f(X_i(s),\nu_i(s)) - \Emb_{P^n} \left[ \gamma_{s}^{ij,y}\gamma_{s}^{ik,y}f(X_i(s),\nu_i(s)) \,\Big|\, X_j,X_k \right] \right) \right. \notag \\
		& \qquad \left. \cdot\left( \gamma_{s}^{\itil\jtil,y}\gamma_{s}^{\itil\ktil,y}f(X_\itil(s),\nu_\itil(s)) - \Emb_{P^n} \left[ \gamma_{s}^{\itil\jtil,y}\gamma_{s}^{\itil\ktil,y}f(X_\itil(s),\nu_\itil(s)) \,\Big|\, X_\jtil,X_\ktil \right] \right) \right] = 0 \label{eq:Jn2_claim}
	\end{align}
	except when $i=\itil$, $j=\jtil$ and $k=\ktil$.

	To see this, first consider $k < \ktil$.
	Using the independence of the collection $\{X_i,\xi_{ij}(0),N_{ij} : i,j \in \Nmb\}$ and \eqref{eq:gamma_s_property}, we have
	\begin{align*}
		& \Emb_{P^n} \left[ \Emb_{P^n} \left[ \gamma_{s}^{\itil\jtil,y}\gamma_{s}^{\itil\ktil,y}f(X_\itil(s),\nu_\itil(s)) \,\Big|\, X_\jtil,X_\ktil \right] \Big|\, X_i,X_j,X_k,\xi_{ij}(0),N_{ij},\xi_{ik}(0),N_{ik},X_\itil,X_\jtil \right] \\
		& = \Emb_{P^n} \left[ \gamma_{s}^{\itil\jtil,y}\gamma_{s}^{\itil\ktil,y}f(X_\itil(s),\nu_\itil(s)) \,\Big|\, X_\jtil \right] \\
		& = \Emb_{P^n} \left[ \Emb_{P^n} \left[ \gamma_{s}^{\itil\jtil,y}\gamma_{s}^{\itil\ktil,y}f(X_\itil(s),\nu_\itil(s)) \,\Big|\, X_\itil, X_\jtil, \xi_{\itil\jtil} \right] \Big|\, X_\jtil \right] = 0
	\end{align*}
	and
	\begin{align*}
		& \Emb_{P^n} \left[ \gamma_{s}^{\itil\jtil,y}\gamma_{s}^{\itil\ktil,y}f(X_\itil(s),\nu_\itil(s)) \,\Big|\, X_i,X_j,X_k,\xi_{ij}(0),N_{ij},\xi_{ik}(0),N_{ik},X_\itil,X_\jtil \right] \\
		&  = \Emb_{P^n} \left[ \Emb_{P^n} \left[ \gamma_{s}^{\itil\jtil,y}\gamma_{s}^{\itil\ktil,y}f(X_\itil(s),\nu_\itil(s)) \,\Big|\, X_i,X_j,X_k,\xi_{ij}(0),N_{ij},\xi_{ik}(0),N_{ik},X_\itil,X_\jtil,\xi_{\itil\jtil} \right] \right. \\
		&  \qquad \left. \,|\, X_i,X_j,X_k,\xi_{ij}(0),N_{ij},\xi_{ik}(0),N_{ik},X_\itil,X_\jtil \right] \\
		& = 0.
	\end{align*}
	Combining these two and conditioning on $X_i,X_j,X_k,\xi_{ij}(0),N_{ij},\xi_{ik}(0),N_{ik},X_\itil,X_\jtil$ in the LHS of \eqref{eq:Jn2_claim}, we have verified \eqref{eq:Jn2_claim} for $k < \ktil$.
	Therefore \eqref{eq:Jn2_claim} holds whenever $k \ne \ktil$.
	
	Next consider $k=\ktil$ but $j < \jtil$.
	Note that we still have
	\begin{align*}
		& \Emb_{P^n} \left[ \Emb_{P^n} \left[ \gamma_{s}^{\itil\jtil,y}\gamma_{s}^{\itil\ktil,y}f(X_\itil(s),\nu_\itil(s)) \,\Big|\, X_\jtil,X_\ktil \right] \Big|\, X_i,X_j,\xi_{ij}(0),N_{ij},\xi_{i\ktil}(0),N_{i\ktil},X_\itil,X_\ktil \right] \\
		& = \Emb_{P^n} \left[ \gamma_{s}^{\itil\jtil,y}\gamma_{s}^{\itil\ktil,y}f(X_\itil(s),\nu_\itil(s)) \,\Big|\, X_\ktil \right] \\
		& = \Emb_{P^n} \left[ \Emb_{P^n} \left[ \gamma_{s}^{\itil\jtil,y}\gamma_{s}^{\itil\ktil,y}f(X_\itil(s),\nu_\itil(s)) \,\Big|\, X_\itil, X_\ktil, \xi_{\itil\ktil} \right] \Big|\, X_\ktil \right] = 0
	\end{align*}
	and
	\begin{align*}
		& \Emb_{P^n} \left[ \gamma_{s}^{\itil\jtil,y}\gamma_{s}^{\itil\ktil,y}f(X_\itil(s),\nu_\itil(s)) \,\Big|\, X_i,X_j,\xi_{ij}(0),N_{ij},\xi_{i\ktil}(0),N_{i\ktil},X_\itil,X_\ktil \right] = 0.
	\end{align*}
	So again we have \eqref{eq:Jn2_claim} for $k=\ktil$ and $j < \jtil$.
	Therefore \eqref{eq:Jn2_claim} holds whenever $k \ne \ktil$ or $j \ne \jtil$.
	
	Consider $k=\ktil$, $j=\jtil$ but $i<\itil$.
	Note that
	\begin{align*}
		& \Emb_{P^n} \left[ \Emb_{P^n} \left[ \gamma_{s}^{\itil\jtil,y}\gamma_{s}^{\itil\ktil,y}f(X_\itil(s),\nu_\itil(s)) \,\Big|\, X_\jtil,X_\ktil \right] \Big|\, X_i,\xi_{i\jtil},\xi_{i\ktil},X_\jtil,X_\ktil \right] \\
		& = \Emb_{P^n} \left[ \gamma_{s}^{\itil\jtil,y}\gamma_{s}^{\itil\ktil,y}f(X_\itil(s),\nu_\itil(s)) \,\Big|\, X_\jtil,X_\ktil \right] \\
		& = \Emb_{P^n} \left[ \gamma_{s}^{\itil\jtil,y}\gamma_{s}^{\itil\ktil,y}f(X_\itil(s),\nu_\itil(s)) \,\Big|\, X_i,\xi_{i\jtil},\xi_{i\ktil},X_\jtil,X_\ktil \right].
	\end{align*}
	Conditioning on $X_i,\xi_{i\jtil},\xi_{i\ktil},X_\jtil,X_\ktil$ in the LHS of \eqref{eq:Jn2_claim}, we have verified \eqref{eq:Jn2_claim} for $k=\ktil$, $j=\jtil$ but $i<\itil$.
	Therefore \eqref{eq:Jn2_claim} holds whenever $k \ne \ktil$, $j \ne \jtil$, or $i \ne \itil$.
	So we have verified the claim.

	Using the claim and Condition \ref{cond:CLT} we have
	\begin{align*}
		& \Emb_{P^n} \left| \frac{1}{n^2} \sum_{1 \le i < j < k \le n} \left( \gamma_{s}^{ij,y}\gamma_{s}^{ik,y}f(X_i(s),\nu_i(s)) - \Emb_{P^n} \left[ \gamma_{s}^{ij,y}\gamma_{s}^{ik,y}f(X_i(s),\nu_i(s)) \,\Big|\, X_j,X_k \right] \right) \right|^2 \\
		& = \frac{1}{n^4} \sum_{1 \le i < j < k \le n} \Emb_{P^n} \left( \gamma_{s}^{ij,y}\gamma_{s}^{ik,y}f(X_i(s),\nu_i(s)) - \Emb_{P^n} \left[ \gamma_{s}^{ij,y}\gamma_{s}^{ik,y}f(X_i(s),\nu_i(s)) \,\Big|\, X_j,X_k \right] \right)^2 \\
		& \le \frac{4\|f\|_\infty^2}{\varepsilon^4n}.
	\end{align*}
	This completes the proof.
\end{proof}

Now we analyze $J^{n,2}$.
Let
\begin{equation}
	\label{eq:Shat}
	\hat{\Smc}^{n,2} := \{ (j,k) \in [n]^2 : j \ne 1, k \ne 1, j \ne k\}.
\end{equation}
Recall $\gamma_s^{ij,y}$ introduced in \eqref{eq:gamma-ij}. {Abusing notation, define}
\begin{align*}
	\lan (\gamma_s^{ij,y})^2, \nu_i(s) \ran & := \int_{\Zmb^2} \left( \gamma(y,X_i(s),\xtil,\xitil) - \lan \gamma(y,X_i(s),\cdot,\cdot), \nu_i(s) \ran \right)^2 \nu_i(s)(d\xtil\,d\xitil) \\
	& = \Emb_{P^n} \left[ (\gamma_s^{ij,y})^2 \,\Big|\, X_i \right], \quad j \ne i.
\end{align*}

\begin{Lemma}
	\label{lem:Jn2}
	\begin{align*}
		J^{n,2} & = \int_{[0,T]\times\Zmb} \Emb_{P^n} \left[ \frac{\lan (\gamma_{s}^{12,y})^2, \nu_1(s) \ran}{\Gamma(y,X_1(s),\nu_1(s))} \right] ds\,\measurea(dy) \\
		& \qquad + \frac{1}{n} \sum_{(j,k) \in \hat{\Smc}^{n,2}} \int_{[0,T]\times\Zmb} \Emb_{P^n} \left[ \frac{\gamma_{s}^{1j,y}\gamma_{s}^{1k,y}}{\Gamma(y,X_1(s),\nu_1(s))} \,\Big|\, X_j,X_k \right] \,ds\,\measurea(dy) + R^{n,2},
	\end{align*}
	where $R^{n,2} \to 0$ in probability as $n \to \infty$.
\end{Lemma}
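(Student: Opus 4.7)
The plan is to split $J^{n,2}$ into its predictable compensator plus a compensated Poisson integral and extract the two asserted terms from the compensator. With $\Gamma_i(s,y):=\Gamma(y,X_i(s),\nu_i(s))$, the integrand of $J^{n,2}$ is predictable, so its compensator equals
\begin{equation*}
	A^{n,2} := \sum_{i=1}^n \int_{[0,T]\times\Zmb} \frac{\Gamma(y,X_i(s),\nubar_i^n(s)-\nu_i(s))^2}{\Gamma_i(s,y)} \, ds\,\measurea(dy),
\end{equation*}
and $M^{n,2}:=J^{n,2}-A^{n,2}$ is an orthogonal sum of compensated-PRM stochastic integrals (one per index $i$). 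Condition \ref{cond:CLT} together with Lemma \ref{lem:iid-moment} applied with $k=4$ to the conditionally i.i.d.\ family $\{(X_j,\xi_{ij}):j\ne i\}$ given $X_i$ (Theorem \ref{thm:no_acceleration_1}(b)) gives $\Emb_{P^n}[(M^{n,2})^2]=O(1/n)$, so $M^{n,2}$ can be absorbed into $R^{n,2}$.

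Using the identity $\Gamma(y,X_i(s),\nubar_i^n(s)-\nu_i(s)) = \tfrac{1}{n}\sum_{j=1}^n\gamma_s^{ij,y}$, I expand the square in $A^{n,2}$ and separate the $j=i$ contribution. Because $\gamma_s^{ii,y}$ is bounded and $\sum_{j\ne i}\gamma_s^{ij,y}$ has $L^2$-norm of order $\sqrt{n}$ conditionally on $X_i$ (from $\Emb_{P^n}[\gamma_s^{ij,y}\mid X_i]=0$ for $j\ne i$ and Lemma \ref{lem:iid-moment}), the pure self-term and self-cross-term, once summed over $i$, are $O_P(1/n)$ and $O_P(1/\sqrt{n})$ respectively. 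For the remaining diagonal piece $\sum_i\sum_{j\ne i}(\gamma_s^{ij,y})^2/(n^2\Gamma_i)$, another use of the conditional i.i.d.\ property and Lemma \ref{lem:iid-moment} yields $\tfrac{1}{n-1}\sum_{j\ne i}(\gamma_s^{ij,y})^2 \to \lan(\gamma_s^{12,y})^2,\nu_i(s)\ran$ in $L^2$ given $X_i$; averaging $\tfrac{1}{n}\sum_i$ via the unconditional weak law of large numbers for the i.i.d.\ $X_i$'s, together with dominated-convergence integration in $(s,y)$, produces the first term of the target.

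The heart of the proof is the off-diagonal contribution $\sum_i\sum_{(j,k):\,j\ne k,\,j,k\ne i}\gamma_s^{ij,y}\gamma_s^{ik,y}/(n^2\Gamma_i)$. Exchangeability and Theorem \ref{thm:no_acceleration_1}(b) imply that for every triple of distinct indices $(i,j,k)$, $\Emb_{P^n}[\gamma_s^{ij,y}\gamma_s^{ik,y}/\Gamma_i \mid X_j,X_k] = g(X_j,X_k)$ for a single deterministic symmetric functional $g$ independent of the labels. Lemma \ref{lem:Jn2_prep} controls the ordering $i<j<k$; its cross-covariance computation (two such summands are uncorrelated unless the triples coincide) extends with only notational changes to the five other orderings of distinct indices, so $\frac{1}{n^2}\sum_{(i,j,k)\text{ distinct}}(\gamma_s^{ij,y}\gamma_s^{ik,y}/\Gamma_i - g(X_j,X_k)) \to 0$ in $L^2$.

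Counting the $n-2$ valid centers for each distinct pair shows $\frac{1}{n^2}\sum_{(i,j,k)\text{ distinct}}g(X_j,X_k)=\frac{n-2}{n^2}\sum_{(j,k):\,j\ne k}g(X_j,X_k)$. Split the inner sum into $(j,k)\in\hat{\Smc}^{n,2}$ and the complementary index-$1$ terms. The former becomes $\frac{1}{n}\sum_{(j,k)\in\hat{\Smc}^{n,2}}g(X_j,X_k) + o_P(1)$, using $(n-2)/n\to 1$ together with the $U$-statistics law of large numbers and $\Emb_{P^n}[g(X_j,X_k)]=0$, giving exactly the second term of the target. For the residual $\tfrac{2(n-2)}{n^2}\sum_{k\ne 1}g(X_1,X_k)$, a tower-property calculation using conditional independence and $\Emb_{P^n}[\gamma_s^{1k,y}\mid X_1]=0$ shows $\int g(x,y')\,\mu(dy')=0$ for every path $x$, so the conditional weak law of large numbers given $X_1$ makes this residual $o_P(1)$. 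The main obstacle is this final matching step, which requires simultaneously generalizing Lemma \ref{lem:Jn2_prep} across all orderings of centers, tracking the $(n-2)/n$ prefactor, and exploiting the mean-zero property of $g$ in its second argument to kill the boundary contribution.
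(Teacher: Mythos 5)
Your proposal is correct and follows essentially the same route as the paper: compensate the Poisson integral (the paper's $\Jtil^{n,2}$, your $A^{n,2}$, with an $O(1/n)$ martingale remainder), split $\frac{1}{n^2}\sum_{i,j,k}\gamma_s^{ij,y}\gamma_s^{ik,y}/\Gamma$ by index patterns, get the first term from the $j=k\ne i$ diagonal via the conditional i.i.d.\ property plus the LLN, and get the second term from the distinct-triple part via (the extension of) Lemma \ref{lem:Jn2_prep} together with the $(n-2)/n^2$ counting and the degeneracy of $g$ to kill the index-$1$ boundary terms. Your handling of the remainder is in fact slightly more explicit than the paper's brief ``one can easily check that $\Emb_{P^n}|R^{n,5}|\le\kappa_4/\sqrt{n}$.''
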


\begin{proof}
	We can write
	\begin{align*}
		J^{n,2} & = \sum_{i=1}^n \int_{\Xmb_T} \one_{[0,\Gamma(y,X_i(s-),\nu_i(s-))]}(z) \frac{\Gamma^2(y,X_i(s-),\nubar_i^n(s-)-\nu_i(s-))}{\Gamma^2(y,X_i(s-),\nu_i(s-))} N_{i}(ds\,dy\,dz) \\
		& = \frac{1}{n^2} \sum_{i,j,k=1}^n \int_{\Xmb_T} \one_{[0,\Gamma(y,X_i(s-),\nu_i(s-))]}(z) \frac{\gamma_{s-}^{ij,y}\gamma_{s-}^{ik,y}}{\Gamma^2(y,X_i(s-),\nu_i(s-))} N_{i}(ds\,dy\,dz). 
	\end{align*}
	Let
	\begin{align*}
		\Jtil^{n,2} & := \frac{1}{n^2} \sum_{i,j,k=1}^n \int_{[0,T]\times\Zmb} \frac{\gamma_{s-}^{ij,y}\gamma_{s-}^{ik,y}}{\Gamma(y,X_i(s-),\nu_i(s-))} \,ds\,\measurea(dy).
	\end{align*}
	Note that
	\begin{align*}
		\max_{i \in [n]} \int_{\Xmb_T} \Emb_{P^n} \left( \sum_{j,k=1}^n \one_{[0,\Gamma(y,X_i(s),\nu_i(s))]}(z) \frac{\gamma_{s}^{ij,y}\gamma_{s}^{ik,y}}{\Gamma^2(y,X_i(s),\nu_i(s))} \right)^2 \,ds\,\measurea(dy)\,dz \le \kappa_1 n^2
	\end{align*}
	by conditional i.i.d.\ property of $(X_j,\xi_{ij})$ given $X_i$ and Lemma \ref{lem:iid-moment}.
	Therefore
	\begin{align*}
		& \Emb_{P^n} |J^{n,2} - \Jtil^{n,2}|^2 \\
		& = \Emb_{P^n} \left| \frac{1}{n^2} \sum_{i,j,k=1}^n \int_{\Xmb_T} \one_{[0,\Gamma(y,X_i(s-),\nu_i(s-))]}(z) \frac{\gamma_{s-}^{ij,y}\gamma_{s-}^{ik,y}}{\Gamma^2(y,X_i(s-),\nu_i(s-))} \Ncompensated_{i}(ds\,dy\,dz) \right|^2 \\
		& = \frac{1}{n^4} \sum_{i=1}^n \int_{\Xmb_T} \Emb_{P^n} \left( \sum_{j,k=1}^n \one_{[0,\Gamma(y,X_i(s),\nu_i(s))]}(z) \frac{\gamma_{s}^{ij,y}\gamma_{s}^{ik,y}}{\Gamma^2(y,X_i(s),\nu_i(s))} \right)^2 \,ds\,\measurea(dy)\,dz \\
		& \le \frac{\kappa_1}{n} \to 0
	\end{align*}
	as $n \to \infty$.
	
	We rewrite $\Jtil^{n,2}$ as
	\begin{align*}
		\Jtil^{n,2} & = \frac{1}{n^2} \sum_{m=1}^5 \sum_{(i,j,k) \in \Smc^{n,m}} \int_{[0,T]\times\Zmb} \frac{\gamma_{s}^{ij,y}\gamma_{s}^{ik,y}}{\Gamma(y,X_i(s),\nu_i(s))} \,ds\,\measurea(dy) =: \sum_{m=1}^5 \Tmc^{n,m},
	\end{align*}
	where $\Smc^{n,1}, \Smc^{n,2}, \Smc^{n,3}, \Smc^{n,4}, \Smc^{n,5}$ are collections of $(i,j,k) \in [n]^3$ which equal $\{i=j=k\}$, $\{i=j \ne k\}$, $\{i=k \ne j\}$, $\{i \ne j=k\}$, $\{i,j,k \mbox{ distinct}\}$, respectively.
	Using Conditions \ref{cond:no_acceleration} and \ref{cond:CLT}, {we have
	\begin{equation*}
		\Emb_{P^n} |\Tmc^{n,1}| \le \frac{\kappa_2}{n} \to 0
	\end{equation*}
	as $n \to \infty$, and further using the conditional i.i.d.\ property of $(X_j,\xi_{ij})$ given $X_i$, we can show
	\begin{equation*}
		\Emb_{P^n} |\Tmc^{n,2}+\Tmc^{n,3}|^2 \le \frac{\kappa_2}{n} \to 0
	\end{equation*}
	as $n \to \infty$.}
	
	For the fourth term $\Tmc^{n,4}$, we have
	\begin{align*}
		\Tmc^{n,4} & = \frac{1}{n^2} \sum_{1 \le i \ne j \le n} \int_{[0,T]\times\Zmb} \frac{(\gamma_{s}^{ij,y})^2}{\Gamma(y,X_i(s),\nu_i(s))} \,ds\,\measurea(dy) \\
		& = \frac{1}{n^2} \sum_{1 \le i \ne j \le n} \int_{[0,T]\times\Zmb} \frac{(\gamma_{s}^{ij,y})^2 - \lan (\gamma_{s}^{ij,y})^2, \nu_i(s) \ran}{\Gamma(y,X_i(s),\nu_i(s))} \,ds\,\measurea(dy) \\
		& \qquad + \frac{1}{n^2} \sum_{1 \le i \ne j \le n} \int_{[0,T]\times\Zmb} \frac{\lan (\gamma_{s}^{ij,y})^2, \nu_i(s) \ran}{\Gamma(y,X_i(s),\nu_i(s))} \,ds\,\measurea(dy).
	\end{align*}
	Here the first term 
	\begin{equation*}
		\frac{1}{n^2} \sum_{1 \le i \ne j \le n} \int_{[0,T]\times\Zmb} \frac{(\gamma_{s}^{ij,y})^2 - \lan (\gamma_{s}^{ij,y})^2, \nu_i(s) \ran}{\Gamma(y,X_i(s),\nu_i(s))} \,ds\,\measurea(dy) \to 0
	\end{equation*}
	in probability, by conditional i.i.d.\ property of $(X_j,\xi_{ij})$ given $X_i$, and the second term
	\begin{equation*}
		\frac{1}{n^2} \sum_{1 \le i \ne j \le n} \int_{[0,T]\times\Zmb} \frac{\lan (\gamma_{s}^{ij,y})^2, \nu_i(s) \ran}{\Gamma(y,X_i(s),\nu_i(s))} \,ds\,\measurea(dy) \to \int_{[0,T]\times\Zmb} \Emb_{P^n} \left[ \frac{\lan (\gamma_{s}^{12,y})^2, \nu_1(s) \ran}{\Gamma(y,X_1(s),\nu_1(s))} \right] ds\,\measurea(dy)
	\end{equation*}
	in probability, by i.i.d.\ property of $(X_i,\nu_i)$, as $n \to \infty$.
	
	Finally for the last term $\Tmc^{n,5}$, we have
	\begin{align}
		\Tmc^{n,5} & = \frac{1}{n^2} \sum_{(i,j,k) \in \Smc^{n,5}} \int_{[0,T]\times\Zmb} \frac{\gamma_{s}^{ij,y}\gamma_{s}^{ik,y}}{\Gamma(y,X_i(s),\nu_i(s))} \,ds\,\measurea(dy) \notag \\
		& = \frac{1}{n^2} \sum_{(i,j,k) \in \Smc^{n,5}} \int_{[0,T]\times\Zmb} \left( \frac{\gamma_{s}^{ij,y}\gamma_{s}^{ik,y}}{\Gamma(y,X_i(s),\nu_i(s))} - \Emb_{P^n} \left[ \frac{\gamma_{s}^{ij,y}\gamma_{s}^{ik,y}}{\Gamma(y,X_i(s),\nu_i(s))} \,\Big|\, X_j,X_k \right] \right) \,ds\,\measurea(dy) \notag \\
		& \qquad + \frac{1}{n^2} \sum_{(i,j,k) \in \Smc^{n,5}} \int_{[0,T]\times\Zmb} \Emb_{P^n} \left[ \frac{\gamma_{s}^{ij,y}\gamma_{s}^{ik,y}}{\Gamma(y,X_i(s),\nu_i(s))} \,\Big|\, X_j,X_k \right] \,ds\,\measurea(dy). \label{eq:Tn5}
	\end{align}
	From Lemma \ref{lem:Jn2_prep} we have
	\begin{align*}
		& \Emb_{P^n} \left( \frac{1}{n^2} \sum_{(i,j,k) \in \Smc^{n,5}} \int_{[0,T]\times\Zmb} \left( \frac{\gamma_{s}^{ij,y}\gamma_{s}^{ik,y}}{\Gamma(y,X_i(s),\nu_i(s))} - \Emb_{P^n} \left[ \frac{\gamma_{s}^{ij,y}\gamma_{s}^{ik,y}}{\Gamma(y,X_i(s),\nu_i(s))} \,\Big|\, X_j,X_k \right] \right) ds\,\measurea(dy) \right)^2 \\
		& \le \frac{\kappa_3}{n}.
	\end{align*}	
	So the first term in \eqref{eq:Tn5} goes to $0$ in probability.
	The second term in \eqref{eq:Tn5} could be written as
	\begin{align*}
		& \frac{n-2}{n^2} \sum_{(j,k) \in [n]^2 : j \ne k} \int_{[0,T]\times\Zmb} \Emb_{P^n} \left[ \frac{\gamma_{s}^{1j,y}\gamma_{s}^{1k,y}}{\Gamma(y,X_1(s),\nu_1(s))} \,\Big|\, X_j,X_k \right] \,ds\,\measurea(dy) \\
		& = \frac{1}{n} \sum_{(j,k) \in \hat{\Smc}^{n,2}} \int_{[0,T]\times\Zmb} \Emb_{P^n} \left[ \frac{\gamma_{s}^{1j,y}\gamma_{s}^{1k,y}}{\Gamma(y,X_1(s),\nu_1(s))} \,\Big|\, X_j,X_k \right] \,ds\,\measurea(dy) + R^{n,5},
	\end{align*}	
	and one can easily check that $\Emb_{P^n} |R^{n,5}| \le \frac{\kappa_4}{\sqrt{n}}$.
	
	Combining above estimates completes the proof.
\end{proof}

Next we analyze $J^{n,1}$.
Let $\theta_{ij}(s) := \Lmc(\xi_{ij}(s) \,|\, X_i[s],X_j[s])$ and note that
\begin{align*}
	\lan \gamma_s^{ij,y}, \theta_{ij}(s) \ran & = \lan \gamma(y,X_i(s),X_j(s),\cdot), \theta_{ij}(s) \ran - \lan \gamma(y,X_i(s),\cdot,\cdot), \nu_i(s) \ran \\
	& = \Emb_{P^n} \left[ \gamma_s^{ij,y} \,\Big|\, X_i(s), X_j(s) \right].
\end{align*}

\begin{Lemma}
	\label{lem:Jn1}
	\begin{align}
		J^{n,1} & = \frac{1}{n} \sum_{(i,j) \in \hat{\Smc}^{n,2}} \int_{\Xmb_T} \one_{[0,\Gamma(y,X_i(s-),\nu_i(s-))]}(z) \frac{\gamma_{s-}^{ij,y} - \lan \gamma_{s-}^{ij,y}, \theta_{ij} \ran }{\Gamma(y,X_i(s-),\nu_i(s-))} \,\Ncompensated_{i}(ds\,dy\,dz) \label{eq:Jn1} \\
		& \qquad + \frac{1}{n} \sum_{(i,j) \in \hat{\Smc}^{n,2}} \int_{\Xmb_T} \one_{[0,\Gamma(y,X_i(s-),\nu_i(s-))]}(z) \frac{\lan \gamma_{s-}^{ij,y}, \theta_{ij} \ran }{\Gamma(y,X_i(s-),\nu_i(s-))} \,\Ncompensated_{i}(ds\,dy\,dz) + R^{n,1}, \notag
	\end{align}
	where $\Emb_{P^n} |R^{n,1}| \to 0$ as $n \to \infty$.
\end{Lemma}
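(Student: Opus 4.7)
The natural starting point is to observe that
$$\Gamma(y,X_i(s),\nubar_i^n(s)) - \Gamma(y,X_i(s),\nu_i(s)) = \frac{1}{n}\sum_{j=1}^n \gamma_s^{ij,y}$$
by \eqref{eq:gamma-ij}, so that
$$J^{n,1} = \frac{1}{n}\sum_{i,j=1}^n I_{ij}^n, \qquad I_{ij}^n := \int_{\Xmb_T} \one_{[0,\Gamma(y,X_i(s-),\nu_i(s-))]}(z) \frac{\gamma_{s-}^{ij,y}}{\Gamma(y,X_i(s-),\nu_i(s-))} \,\Ncompensated_{i}(ds\,dy\,dz).$$
I would then partition $[n]^2 = \hat{\Smc}^{n,2} \cup \{i=j\} \cup \{i=1,\, j\ne 1\} \cup \{j=1,\, i\ne 1\}$, and on the main set $\hat{\Smc}^{n,2}$ apply the trivial algebraic splitting $\gamma_{s-}^{ij,y} = (\gamma_{s-}^{ij,y} - \lan \gamma_{s-}^{ij,y}, \theta_{ij}\ran) + \lan \gamma_{s-}^{ij,y}, \theta_{ij}\ran$, which inside each compensated integral yields precisely the two summands on the right-hand side of \eqref{eq:Jn1}. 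The residual is $R^{n,1} := \frac{1}{n}\sum_{(i,j)\notin\hat{\Smc}^{n,2}} I_{ij}^n$, which I must show vanishes in $L^1$.

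By Condition \ref{cond:CLT}, the integrand of every $I_{ij}^n$ is uniformly bounded, so It\^o's isometry gives $\Emb_{P^n}|I_{ij}^n|^2 \le \kappa$ for some $\kappa$ independent of $(i,j,n)$. The diagonal contribution $\frac{1}{n}\sum_{i=1}^n I_{ii}^n$ and the column contribution $\frac{1}{n}\sum_{i\ne 1} I_{i1}^n$ are each $O(1/\sqrt{n})$ in $L^2$: their summands are compensated integrals against distinct, independent PRMs $\{\Ncompensated_i\}$, hence are orthogonal, so a direct variance sum suffices.

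The only delicate case is the row contribution $\frac{1}{n}\sum_{j\ne 1} I_{1j}^n$, whose $n-1$ summands all share the single PRM $\Ncompensated_1$. Combining them into one compensated integral and applying It\^o's isometry reduces the estimate to controlling
$$\frac{1}{n^2}\int_{\Xmb_T} \Emb_{P^n}\left[ \one_{[0,\Gamma(y,X_1(s),\nu_1(s))]}(z) \left( \sum_{j\ne 1} \frac{\gamma_s^{1j,y}}{\Gamma(y,X_1(s),\nu_1(s))} \right)^{\!2} \right] ds\,\measurea(dy)\,dz.$$
After expanding the square, the off-diagonal terms $\Emb_{P^n}[\gamma_s^{1j,y}\gamma_s^{1k,y}]$ for $j\ne k$ with $j,k\ne 1$ vanish, because $\{(X_j,\xi_{1j}):j\ne 1\}$ are conditionally i.i.d.\ given $X_1[s]$ by Theorem \ref{thm:no_acceleration_1}(b) and $\Emb_{P^n}[\gamma_s^{1j,y}\mid X_1]=0$ by \eqref{eq:gamma_s_property}; the remaining $n-1$ diagonal terms contribute $O(n)$, giving an overall $O(1/n)$ bound. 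This appeal to conditional independence is the one genuinely probabilistic input, and it is the step I would expect to take the most care in formalizing cleanly.
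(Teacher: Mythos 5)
Your proposal is correct and follows essentially the same route as the paper: rewrite $J^{n,1}$ as $\frac{1}{n}\sum_{i,j}$ of compensated integrals of $\gamma^{ij,y}_{s-}/\Gamma$, note the $\theta_{ij}$-splitting on $\hat{\Smc}^{n,2}$ is a trivial algebraic identity, and bound the leftover pairs by orthogonality of integrals against distinct independent PRMs together with the It\^o isometry for the shared-$\Ncompensated_1$ row, where the cross terms vanish by the conditional i.i.d.\ structure given $X_1$ and \eqref{eq:gamma_s_property}. The only deviation is cosmetic: the paper lumps the $j=1$ and $j=i$ leftovers together for $i\ge 2$ and keeps $j=1$ inside the $i=1$ row, while you partition into diagonal, row, and column sets, which changes nothing of substance.
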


\begin{proof}
	We can write
	\begin{align*}
		J^{n,1} & = \sum_{i=1}^n \int_{\Xmb_T} \one_{[0,\Gamma(y,X_i(s-),\nu_i(s-))]}(z) \frac{\Gamma(y,X_i(s-),\nubar_i^n(s-)-\nu_i(s-))}{\Gamma(y,X_i(s-),\nu_i(s-))} \,\Ncompensated_{i}(ds\,dy\,dz) \\
		& = \frac{1}{n} \sum_{i,j=1}^n \int_{\Xmb_T} \one_{[0,\Gamma(y,X_i(s-),\nu_i(s-))]}(z) \frac{\gamma_{s-}^{ij,y}}{\Gamma(y,X_i(s-),\nu_i(s-))} \,\Ncompensated_{i}(ds\,dy\,dz).
	\end{align*}
	Therefore \eqref{eq:Jn1} holds with 
	\begin{align*}
		R^{n,1} & = \frac{1}{n} \sum_{j=1}^n \int_{\Xmb_T} \one_{[0,\Gamma(y,X_1(s-),\nu_1(s-))]}(z) \frac{\gamma_{s-}^{1j,y}}{\Gamma(y,X_1(s-),\nu_1(s-))} \,\Ncompensated_{1}(ds\,dy\,dz) \\
		& \quad + \frac{1}{n} \sum_{i=2}^n \int_{\Xmb_T} \one_{[0,\Gamma(y,X_i(s-),\nu_i(s-))]}(z) \frac{\gamma_{s-}^{i1,y}+\gamma_{s-}^{ii,y}}{\Gamma(y,X_i(s-),\nu_i(s-))} \,\Ncompensated_{i}(ds\,dy\,dz).
	\end{align*}
	Here for the first term,
	\begin{align*}
		& \Emb_{P^n} \left[ \left( \frac{1}{n} \sum_{j=1}^n \int_{\Xmb_T} \one_{[0,\Gamma(y,X_1(s-),\nu_1(s-))]}(z) \frac{\gamma_{s-}^{1j,y}}{\Gamma(y,X_1(s-),\nu_1(s-))} \,\Ncompensated_{1}(ds\,dy\,dz) \right)^2 \right] \\
		& = \Emb_{P^n} \int_{[0,T]\times\Zmb} \frac{\left(\frac{1}{n} \sum_{j=1}^n\gamma_{s}^{1j,y}\right)^2}{\Gamma(y,X_1(s),\nu_1(s))} \,ds\,\measurea(dy) \le \frac{\kappa_1}{n},
	\end{align*}
	since 
	$$\Emb_{P^n} \left[\frac{\gamma_{s}^{1j,y} \gamma_{s}^{1k,y}}{\Gamma(y,X_1(s),\nu_1(s))} \right] = 0, \quad \mbox{ whenever } j \ne k.$$
	For the second term, using the independence of $\Ncompensated_i$ clearly we have
	\begin{align*}
		& \Emb_{P^n} \left| \frac{1}{n} \sum_{i=2}^n \int_{\Xmb_T} \one_{[0,\Gamma(y,X_i(s-),\nu_i(s-))]}(z) \frac{\gamma_{s-}^{i1,y}+\gamma_{s-}^{ii,y}}{\Gamma(y,X_i(s-),\nu_i(s-))} \,\Ncompensated_{i}(ds\,dy\,dz) \right| \le \frac{\kappa_2}{\sqrt{n}}.
	\end{align*}
	Therefore $\Emb_{P^n} |R^{n,1}| \le \frac{\kappa_3}{\sqrt{n}} \to 0$ as $n \to \infty$, and this completes the proof.
\end{proof}

The most difficult part is to analyze the following term in $J^{n,1}$:
\begin{equation}
	\label{eq:Un}
	U^n := \frac{1}{n} \sum_{2 \le i < j \le n} u^n(i,j),
\end{equation}
where
\begin{align*}
	u^n(i,j) & := \int_{\Xmb_T} \one_{[0,\Gamma(y,X_i(s-),\nu_i(s-))]}(z) \frac{\gamma_{s-}^{ij,y} - \lan \gamma_{s-}^{ij,y}, \theta_{ij} \ran }{\Gamma(y,X_i(s-),\nu_i(s-))} \,\Ncompensated_{i}(ds\,dy\,dz) \\
	& \qquad + \int_{\Xmb_T} \one_{[0,\Gamma(y,X_j(s-),\nu_j(s-))]}(z) \frac{\gamma_{s-}^{ji,y} - \lan \gamma_{s-}^{ji,y}, \theta_{ji} \ran }{\Gamma(y,X_j(s-),\nu_j(s-))} \,\Ncompensated_{j}(ds\,dy\,dz).
\end{align*}

Recall $\Omega_v, \Omega_0, \alpha(\omega_0,\cdot), P_0$ introduced in Sections \ref{sec:canonical} and \ref{sec:integral_operators}.
We now use the results from Section \ref{sec:Dynkin} with $\Smb = \Omega_v$ and $\thetabar = \alpha(\omega_0,\cdot)$, $\omega_0 \in \Omega_0$ to get the asymptotics of $(U^n, J^{n,1}, J^{n,2}, J^{n,3})$.
For each $\omega_0 \in \Omega_0$, $k \ge 1$ and $g \in L_{sym}^2(\alpha(\omega_0,\cdot)^{\otimes k})$, the MWI $I_k^{\omega_0}(g)$ is defined as in Section \ref{sec:Dynkin}.
More precisely, let $\Amc^k$ be the collection of all measurable $g \colon \Omega_0 \times \Omega_v^k \to \Rmb$ such that
\begin{equation*}
	\int_{\Omega_v^k} |g(\omega_0,\omega_1,\dotsc,\omega_k)|^2 \, \alpha(\omega_0,d\omega_1)\dotsm\alpha(\omega_0,d\omega_k) < \infty, \quad P_0 \mbox{ a.e.\ } \omega_0
\end{equation*}
and for every permutation $\pi$ on $[k]$, $g(\omega_0,\omega_1,\dotsc,\omega_k)=g(\omega_0,\omega_{\pi(1)},\dotsc,\omega_{\pi(k)})$, $P_0 \otimes \alpha^{\otimes k}$ a.s.\, where $P_0 \otimes \alpha^{\otimes k} (d\omega_0,d\omega_1,\dotsc,d\omega_k) := P_0(d\omega_0) \prod_{i=1}^k \alpha(\omega_0,d\omega_i)$.
Then there is a measurable space $(\Omega^*,\Fmc^*)$ and a regular conditional probability distribution $\lambda^* \colon \Omega_0 \times \Fmc^* \to [0,1]$ such that on the probability space $(\Omega_0 \times \Omega^*, \Bmc(\Omega_0) \otimes \Fmc^*, P_0 \otimes \lambda^*)$, where
\begin{equation*}
	P_0 \otimes \lambda^* (A \times B) := \int_A \lambda^*(\omega_0,B) \, P_0(d\omega_0), \quad A \times B \in \Bmc(\Omega_0) \otimes \Fmc^*,
\end{equation*}
there is a collection of real-valued random variables $\{I_k(g) : g \in \Amc^k, k \ge 1\}$ satisfying
\begin{enumerate}[(a)]
\item 
	For all $g \in \Amc^1$ the conditional distribution of $I_1(g)$ given $\Gmc^* := \Bmc(\Omega_0) \otimes \{\emptyset, \Omega^*\}$ is Normal with mean $0$ and variance $\int_{\Omega_v} g^2(\omega_0,\omega_1) \, \alpha(\omega_0,d\omega_1)$;
\item 
	$I_k$ is (a.s.) linear map on $\Amc^k$;
\item 
	For $g \in \Amc^k$ of the form
	\begin{equation*}
		g(\omega_0,\omega_1,\dotsc,\omega_k) = \prod_{i=1}^k \gtil(\omega_0,\omega_i), \quad \mbox{s.t. } \int_{\Omega_v} \gtil^2(\omega_0,\omega_1) \, \alpha(\omega_0,d\omega_1) < \infty, \: P_0 \mbox{ a.e.\ } \omega_0,
	\end{equation*}
	we have
	\begin{align*}
		& I_k(g)(\omega_0,\omega^*) = \sum_{j=1}^{\lfl k/2 \rfl} (-1)^j C_{k,j} \left( \int_{\Omega_v} \gtil^2(\omega_0,\omega_1) \, \alpha(\omega_0,d\omega_1) \right)^j (I_1(\gtil)(\omega_0,\omega^*))^{k-2j}, \\
		& \quad P_0 \otimes \lambda^* \mbox{ a.e.\ } (\omega_0,\omega^*)
	\end{align*}
	and
	\begin{equation*}
		\int_{\Omega^*} (I_k(g)(\omega_0,\omega^*))^2 \, \lambda^*(\omega_0,d\omega^*) = k! \left( \int_{\Omega_v} \gtil^2(\omega_0,\omega_1) \, \alpha(\omega_0,d\omega_1) \right)^k, \: P_0 \mbox{ a.e.\ } \omega_0,
	\end{equation*}
	where $C_{k,j}$ are as in \eqref{eq:MWI_formula}.
\end{enumerate}
We write $I_k(g)(\omega_0,\cdot)$ as $I_k^{\omega_0}(g)$.

Recall $U^n$ from \eqref{eq:Un} and the canonical processes $V_i$ from Section \ref{sec:canonical}.
The following lemma is a key ingredient to get the asymptotics of $(U^n, J^{n,1}, J^{n,2}, J^{n,3})$.

\begin{Lemma}
	\label{lem:Un_joint}
	Let $\{ \phi_k \}_{k=1}^\infty$ be such that $\phi_k \in \Amc^k$ for each $k \ge 1$.
	Let 
	\begin{equation*}
		\hat{\Umc}^n_k (\phi_k) :=
		\begin{cases}
			\displaystyle \sum_{2 \le i_1 < i_2 < \dotsb < i_k \le n} \phi_k(V_{i_1},\dotsc,V_{i_k}) & \text{for } n \ge k \\
			0 & \text{for } n<k.
		\end{cases}
	\end{equation*}
	Then the following convergence holds as $n \to \infty$:
	\begin{equation*}
		\left( U^n, \left( n^{-\frac{k}{2}} \hat{\Umc}^n_k (\phi_k) \right)_{k \ge 1} \right) \Rightarrow \left( Z, \left( \frac{1}{k!} I_k^{X_1}(\phi_k) \right)_{k \ge 1} \right)
	\end{equation*}
	as a sequence of $\Rmb^\infty$-valued random variables, where $Z$ is Gaussian with mean $0$ variance 
	\begin{equation}
		\label{eq:Zvar}
		\sigma^2 := \Emb_{P^n} \int_{[0,T]\times\Zmb} \frac{(\gamma_{s-}^{ij,y} - \lan \gamma_{s-}^{ij,y}, \theta_{ij} \ran )^2}{\Gamma(y,X_i(s-),\nu_i(s-))} \,ds\,\measurea(dy)
	\end{equation}
	and is independent of $\{I_k^{X_1}(\cdot)\}_{k \ge 1}$.
\end{Lemma}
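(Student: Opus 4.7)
The plan is to exploit a two-level conditional structure. Conditional on $V_0 = X_1$, Theorem \ref{thm:no_acceleration_1}(b) gives that $\{V_i\}_{i \ge 2}$ are i.i.d.\ with law $\alpha(X_1,\cdot)$; this immediately yields a Dynkin-Mandelbaum limit for the $\hat{\Umc}^n_k(\phi_k)$ terms. Conditional on the larger $\sigma$-field $\Gmc_n := \sigma\{V_i : i \in \{0\}\cup[n]\}$, the edges $\{\xi_{ij}\}_{2 \le i \neq j \le n}$, which carry the remaining randomness of $U^n$, are mutually independent, since each is driven by its own independent PRM $N_{ij}$ and independent initial condition $\xi_{ij}(0)$ with the trajectories $(X_i,X_j)$ frozen by the conditioning. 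This second layer will give a conditional CLT for $U^n$ whose limiting variance turns out to be the deterministic constant $\sigma^2$, independent of $X_1$; that fact is what forces the advertised independence of $Z$ from the MWIs.

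\emph{Step 1.} Applying Lemma \ref{thm:Dynkin} pointwise in $X_1$ to the conditionally i.i.d.\ sequence $\{V_i\}_{i \ge 2}$ yields
\[
\big(n^{-k/2}\hat{\Umc}^n_k(\phi_k)\big)_{k \ge 1} \Rightarrow \big(\tfrac{1}{k!}\, I_k^{X_1}(\phi_k)\big)_{k \ge 1}
\]
as $\Rmb^\infty$-valued random variables.

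\emph{Step 2.} Write $U^n = n^{-1}\sum_{2 \le i \neq j \le n} A_{ij}$ where
\[
A_{ij} := \int_{\Xmb_T}\!\one_{[0,\Gamma(y,X_i(s-),\nu_i(s-))]}(z)\, \frac{\gamma_{s-}^{ij,y} - \langle \gamma_{s-}^{ij,y},\theta_{ij}\rangle}{\Gamma(y,X_i(s-),\nu_i(s-))}\, \Ncompensated_i(ds\,dy\,dz).
\]
Because $\nu_i = \Phi_{\cdot}(X_i[\cdot])$ by Theorem \ref{thm:no_acceleration_1}(b), each $A_{ij}$ is a measurable function of $(V_i,\xi_{ij})$. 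The identity $\langle \gamma_s^{ij,y}, \theta_{ij}(s)\rangle = \Emb[\gamma_s^{ij,y}\mid X_i[s],X_j[s]]$ gives $\Emb[A_{ij}\mid\Gmc_n]=0$, and conditional independence of $\{\xi_{ij}\}_{2 \le i \neq j \le n}$ given $\Gmc_n$ transfers to conditional independence of $\{A_{ij}\}_{2 \le i \neq j \le n}$ given $\Gmc_n$. Setting $V_{ij}:=\Emb[A_{ij}^2\mid\Gmc_n]$, the orthogonality of compensated-PRM stochastic integrals together with the conditional independence of edges gives $\Emb[A_{ij}A_{kl}]=0$ unless $(i,j)=(k,l)$, hence $\Emb[V_{ij}]=\sigma^2$. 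Moreover $A_{ij}$ depends only on $(X_i,X_j,N_i,\xi_{ij})$, none of which depends on $X_1$; so $V_{ij}$ is a function of $(V_i,V_j)$ alone and is independent of $X_1$. A U-statistic law of large numbers applied conditionally on $X_1$ therefore yields
\[
\frac{1}{n^2}\sum_{2 \le i \neq j \le n} V_{ij} \xrightarrow{P} \sigma^2.
\]
Combined with Lindeberg's condition (verified via the uniform boundedness of the integrands granted by Condition \ref{cond:CLT}), the Lindeberg CLT for the triangular array $\{A_{ij}/n\}$ gives
\[
\Emb\!\left[\exp(\sqrt{-1}\, t\, U^n)\,\big|\,\Gmc_n\right] \xrightarrow{P} \exp(-t^2\sigma^2/2),\qquad t\in\Rmb.
\]

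\emph{Step 3.} Since each $\hat{\Umc}^n_k(\phi_k)$ is $\Gmc_n$-measurable, the tower property gives
\begin{align*}
& \Emb\!\left[\exp\!\left(\sqrt{-1}\, t U^n + \sqrt{-1}\sum_{k=1}^K s_k\, n^{-k/2}\hat{\Umc}^n_k(\phi_k)\right)\right] \\
& \qquad = \Emb\!\left[\exp\!\left(\sqrt{-1}\sum_{k=1}^K s_k\, n^{-k/2}\hat{\Umc}^n_k(\phi_k)\right)\, \Emb\!\left[\exp(\sqrt{-1}\, t\, U^n)\,\big|\,\Gmc_n\right]\right].
\end{align*}
Bounded convergence combined with Steps 1 and 2 sends the right-hand side to $\exp(-t^2\sigma^2/2)\cdot \Emb\!\big[\exp\!\big(\sqrt{-1}\sum_{k} s_k\tfrac{1}{k!} I_k^{X_1}(\phi_k)\big)\big]$, which is the characteristic function of the claimed limit with $Z\sim N(0,\sigma^2)$ independent of the multiple Wiener integrals. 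The hardest part of the argument is Step 2: identifying $V_{ij}=\Emb[A_{ij}^2\mid\Gmc_n]$ as a measurable function of $(V_i,V_j)$ alone that is independent of $X_1$, proving the conditional U-statistic LLN $n^{-2}\sum V_{ij}\to\sigma^2$, and checking Lindeberg's condition for the array $\{A_{ij}/n\}$. The independence of $Z$ from the MWIs rests precisely on the fact that this limiting variance is the deterministic constant $\sigma^2$, with no residual dependence on $X_1$.
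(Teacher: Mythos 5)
Your proposal is correct and follows essentially the same route as the paper: a conditional CLT for $U^n$ given the node variables (exploiting conditional independence and conditional centering of the edge randomness, with the conditional variance shown to converge in probability to the deterministic constant $\sigma^2$), combined with the Dynkin--Mandelbaum limit for the symmetric statistics and a characteristic-function/tower-property factorization. The only differences are cosmetic: you condition on $\Gmc_n=\sigma\{V_0,V_1,\dotsc,V_n\}$ rather than on $(X_i,N_i)_{i=1}^n$, sum over ordered pairs $A_{ij}$ instead of the paper's $u^n(i,j)$, and invoke Lindeberg where the paper verifies Lyapunov with $\delta=2$ (note the $A_{ij}$ are unbounded, so the moment bound, not boundedness, is what verifies the condition, and the degenerate case $\sigma^2=0$ should be treated separately as in the paper).
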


\begin{proof}
	Fix $m \in \Nmb$, $\phi_k \in \Amc^k$ and $t, s_k \in \Rmb$ for $k=1,\dotsc,m$.
	Denote by $\Emb_{{P^n},V}$ the conditional expectation under ${P^n}$ given $(X_i,N_i)_{i=1}^n$.
	Since $\xi_{ij}$ is conditionally independent given $(X_i,N_i)_{i=1}^n$, we have
	\begin{equation*}
		\sigma_n^2 := \Emb_{{P^n},V} [U^n]^2 = \frac{1}{n^2} \sum_{2 \le i < j \le n} \Emb_{{P^n},V} [u^n(i,j)]^2.
	\end{equation*}
	Note that
	\begin{equation*}
		\Emb_{P^n} [\sigma_n^2] = \frac{1}{n^2} \sum_{2 \le i < j \le n} \Emb_{P^n} [u^n(i,j)]^2 \to \Emb_{P^n} \int_{[0,T]\times\Zmb} \frac{(\gamma_{s-}^{ij,y} - \lan \gamma_{s-}^{ij,y}, \theta_{ij} \ran )^2}{\Gamma(y,X_i(s-),\nu_i(s-))} \,ds\,\measurea(dy) = \sigma^2
	\end{equation*}
	and (since the cross product term below is zero when $(i,j,\itil,\jtil)$ are distinct)
	\begin{align*}
		\Emb_{P^n} [\sigma_n^2 - \Emb_{P^n} [\sigma_n^2]]^2 = \frac{1}{n^4} \Emb_{P^n} \left[ \sum_{2 \le i < j \le n} \left( \Emb_{{P^n},V} [u^n(i,j)]^2 - \Emb_{P^n} [u^n(i,j)]^2 \right) \right]^2 \le \frac{\kappa_1}{n} \to 0.
	\end{align*}
	So $\sigma_n^2 \to \sigma^2$ in probability as $n \to \infty$.
	Suppose without loss of generality that $\sigma^2 > 0$, since otherwise we have that $Z=0$, $U^n \to 0$ in probability as $n \to \infty$ and the desired convergence holds trivially by Lemma \ref{thm:Dynkin}.
	Also note that
	\begin{align*}
		\Emb_{P^n} \sum_{2 \le i \ne j \le n} \Emb_{{P^n},V} \left| \frac{u^n(i,j)}{n} \right|^4 \le \frac{\kappa_2}{n^2} \to 0
	\end{align*}
	as $n \to \infty$.
	Hence the Lyapunov's condition for CLT (see \cite[Theorem 27.3]{Billingsley1995probability}) holds with $\delta=2$:
	\begin{equation*}
		\lim_{n \to \infty} \frac{1}{\sigma_n^{2+\delta}} \sum_{2 \le i \ne j \le n} \Emb_{{P^n},V} \left| \frac{u^n(i,j)}{n} \right|^{2+\delta} = 0,
	\end{equation*}
	where the convergence is in probability.
	It then follows from standard proofs of CLT and a subsequence argument that for each $t \in \Rmb$,
	\begin{equation*}
		\Emb_{{P^n},V} \left[ e^{\sqrt{-1}tU^n} \right] - e^{-t^2\sigma_n^2/2} \to 0
	\end{equation*}
	in probability as $n \to \infty$.
	This together with the convergence of $\sigma_n^2 \to \sigma^2$ implies that
	\begin{equation}
		\label{eq:Un_pf1}
		\Emb_{{P^n},V} \left[ e^{\sqrt{-1}tU^n} \right] \to e^{-t^2\sigma^2/2}
	\end{equation}
	in probability as $n \to \infty$.
	Now let $(t, s_1, \ldots s_m) \mapsto \varphi_n(t,s_1,\dotsc,s_m)$ be the characteristic function of $$(U^n,n^{-\frac{1}{2}} \hat{\Umc}^n_1 (\phi_1),\dotsc,n^{-\frac{m}{2}} \hat{\Umc}^n_m (\phi_m)),$$ and $$\varphi(t,s_1,\dotsc,s_m) := e^{-\half t^2\sigma^2} \psi(s_1,\dotsc,s_m),\;\; (t,s_1,\dotsc,s_m)\in \Rmb^{m+1}$$
	be that of $(Z,I_1^{X_1}(\phi_1),\dotsc,\frac{1}{m!} I_m^{X_1}(\phi_m))$.
	From Lemma \ref{thm:Dynkin} it follows that for all $(s_1,\dotsc,s_m)\in \Rmb^{m}$
	\begin{equation}
		\Emb_{P^n} [e^{\sqrt{-1} \sum_{k=1}^m s_k n^{-\frac{k}{2}} \hat{\Umc}^n_k (\phi_k)}] \to \psi(s_1,\dotsc,s_m) \mbox{ as } n \to \infty.
		\label{eq:Un_pf2}
	\end{equation}
	Thus as $n \to \infty$,
	\begin{align*}
		& \varphi_n(t,s_1,\dotsc,s_m) - \varphi(t,s_1,\dotsc,s_m) \\
		& = \Emb_{P^n} \left[ e^{\sqrt{-1}tU^n + \sqrt{-1} \sum_{k=1}^m s_k n^{-\frac{k}{2}} \hat{\Umc}^n_k (\phi_k)} - e^{-\half t^2\sigma^2} \psi(s_1,\dotsc,s_m) \right] \\
		& = \Emb_{P^n} \left[ \left( \Emb_{{P^n},V} \left[ e^{\sqrt{-1}tU^n}\right] - e^{-\half t^2\sigma^2} \right) e^{\sqrt{-1} \sum_{k=1}^m s_k n^{-\frac{k}{2}} \hat{\Umc}^n_k (\phi_k)} \right] \\
		& \quad + \left(\Emb_{P^n} [e^{\sqrt{-1} \sum_{k=1}^m s_k n^{-\frac{k}{2}} \hat{\Umc}^n_k (\phi_k)}] - \psi(s_1,\dotsc,s_m)\right) e^{-\half t^2\sigma^2} \\
		& \to 0,
	\end{align*}
	where the convergence follows from \eqref{eq:Un_pf1} and \eqref{eq:Un_pf2}.
	This completes the proof.
\end{proof}

Now we analyze the asymptotics of $(U^n, J^{n,1}, J^{n,2}, J^{n,3})$.
Recall $A_{\omega_0}, h, \Xi$ introduced in Section \ref{sec:integral_operators}.
For $\omega_0 \in \Omega_0$, denote by $A_{\omega_0}^*$ the adjoint operator of $A_{\omega_0}$, that is
\begin{equation*}
	A_{\omega_0}^* g(\omega_1) = \int_{\Omega_v} g(\omega_2) h(\omega_1,\omega_2) \, \alpha(\omega_0,d\omega_2), \quad g \in \Hmc_{\omega_0}, \omega_1 \in \Omega_v.
\end{equation*}

\begin{Lemma}
	\label{lem:A}
	For $P_0$ a.e.\ $\omega_0$,
	(a) Trace$(A_{\omega_0}A_{\omega_0}^*) = \int_{\Omega_v^2} h^2(\omega_1,\omega_2) \,\alpha(\omega_0,d\omega_1)\,\alpha(\omega_0,d\omega_2) = \int_{\Omega_v^2} h^2(\omega_1,\omega_2) \,\Xi(d\omega_1)\,\Xi(d\omega_2) = \int_{[0,T]\times\Zmb} \lambda(t,y) \,dt\,\measurea(dy)$,
	where
	\begin{equation*}
		\lambda(t,y) := \Emb_{P^n} \left[ \frac{\lan \gammabar_{t,y}(X_1[t],X_2(t),\cdot), \theta(t,X_1[t],X_2[t]) \ran^2 }{\Gamma(y,X_1(t),\nu_1(t))} \right], \quad (t,y) \in [0,T]\times\Zmb.
	\end{equation*}
	(b) Trace$(A_{\omega_0}^n)=0$ for all $n \ge 2$.
	(c) $I-A_{\omega_0}$ is invertible.
\end{Lemma}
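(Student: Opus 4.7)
The plan is to prove the three claims in turn, exploiting throughout that $h(\omega_1,\omega_2)$ is the stochastic integral of a predictable integrand against the compensated PRM $\Ncompensated_*(\omega_1)$. For (a), the first equality is the standard identification of Trace$(A_{\omega_0}A_{\omega_0}^*)$ with $\|h\|_{L^2(\alpha\otimes\alpha)}^2$. For the second equality, observe that $h$ involves $\omega_1$ only through $(N_*,X_*)(\omega_1)$ and $\omega_2$ only through $X_*(\omega_2)$; by Theorem \ref{thm:no_acceleration_1}(b), $X_2$ and $N_2$ are independent of $X_1$, so the marginals of $(N_*,X_*)(\omega_1)$ and of $X_*(\omega_2)$ agree under $\alpha(\omega_0,\cdot)$ and under $\Xi$, forcing the two integrals to coincide. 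For the third equality I apply It\^o's isometry: integrating the squared integrand first in $z$ over $[0,\Gamma(y,X_*(s),\nu(s,X_*[s]))]$ yields $\lan\gammabar_{s,y},\theta\ran^2/\Gamma$, and taking expectations over $\omega_1,\omega_2$ reproduces $\int_{[0,T]\times\Zmb}\lambda(t,y)\,dt\,\measurea(dy)$.

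For (b) the key observation is that $h(\omega_1,\omega_2)$ is in fact a \emph{deterministic} functional $H(X_*(\omega_1),X_*(\omega_2))$ of the two paths alone: on the region $\{z\le\Gamma(y,X_*(\omega_1)(s-),\nu(s-,X_*(\omega_1)[s-]))\}$ the atoms of $N_*(\omega_1)$ coincide with the jumps of $X_*(\omega_1)$ and the integrand is $z$-independent, while the compensator part collapses to $\int_0^T\int_\Zmb\lan\gammabar_{s,y},\theta\ran\,ds\,\measurea(dy)$. Consequently
\begin{equation*}
	\mbox{Trace}(A_{\omega_0}^n)=\Emb\left[\prod_{k=1}^n H(X_k,X_{k+1})\right],\qquad X_{n+1}:=X_1,
\end{equation*}
where $X_1,\dotsc,X_n$ are realized as independent McKean--Vlasov solutions driven by independent PRMs $N_1,\dotsc,N_n$ on a common filtered space with filtration $\Fmc_t:=\sigma(N_i|_{[0,t]},X_i(0):1\le i\le n)$. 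Each factor $M^{(k)}:=H(X_k,X_{k+1})$ is then an $\Fmc_t$-martingale starting at $0$, and since $N_1,\dotsc,N_n$ share no common atoms we have $[M^{(i)},M^{(j)}]=0$ for $i\ne j$. An induction on $n$ using the jump-process product rule $d(UV)=U_{-}\,dV+V_{-}\,dU+d[U,V]$ then shows that $\prod_{k=1}^n M^{(k)}$ is itself a martingale starting at $0$, so its expectation at time $T$ vanishes.

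For (c), part (a) together with Condition \ref{cond:CLT} and the finiteness of $\measurea$ yields $\|A_{\omega_0}\|_{HS}^2<\infty$, hence $A_{\omega_0}$ is compact. For $n\ge 2$, $A_{\omega_0}^n$ is trace class and Lidskii's theorem gives Trace$(A_{\omega_0}^n)=\sum_i\lambda_i^n$, with $\{\lambda_i\}$ the nonzero eigenvalues counted with algebraic multiplicity and $|\lambda_i|\to 0$. If the spectral radius $\rho$ were strictly positive, dividing by $\rho^n$ and letting $n\to\infty$ would force $p_n:=\sum_{|\lambda_i|=\rho}(\lambda_i/\rho)^n\to 0$; however the Ces\`aro average $\frac{1}{N}\sum_{n=1}^N|p_n|^2$ converges to $\sum_\alpha m_\alpha^2\ge 1$ (with $m_\alpha$ the multiplicity of an eigenvalue $\alpha$ of modulus $\rho$), a contradiction. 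Thus $A_{\omega_0}$ has no nonzero eigenvalue and the Fredholm alternative yields the invertibility of $I-A_{\omega_0}$. The main obstacle in the argument is the path-only realization in (b): it is what lets me place the $n$ factors $M^{(k)}$ on a single filtered space as true martingales with pairwise vanishing covariation, converting the combinatorial trace identity into a clean martingale orthogonality statement.
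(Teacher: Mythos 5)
Your proof is correct, and in substance it follows the same route as the paper; the difference is one of self-containedness. For part (a) the paper argues exactly as you do (Hilbert--Schmidt trace identity, the observation that $h$ ignores the $\xi$-coordinates so that $\alpha(\omega_0,\cdot)$ may be replaced by $\Xi$, then the It\^o isometry). For parts (b) and (c), however, the paper simply writes the cyclic-integral formula for Trace$(A_{\omega_0}^n)$ and cites \cite[Lemma 2.7]{ShigaTanaka1985} and \cite[Lemma 1.3]{ShigaTanaka1985}, whereas you prove both facts from scratch: the vanishing of the cyclic trace via realizing the $n$ factors as terminal values of martingales driven by the independent PRMs $N_1,\dotsc,N_n$ with pairwise zero covariation, and the invertibility of $I-A_{\omega_0}$ via Hilbert--Schmidt compactness, Lidskii's theorem and the Ces\`aro argument showing $A_{\omega_0}$ is quasinilpotent. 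This buys a self-contained argument at the cost of length; the paper's citations encapsulate precisely these two facts. Two small remarks: (i) your claim that $h(\omega_1,\omega_2)$ is a \emph{deterministic} functional of the two paths is slightly too strong if $\measurea(\{0\})>0$ (accepted atoms with $y=0$ do not register as jumps of $X_*$), but this claim is not load-bearing --- the identity Trace$(A_{\omega_0}^n)=\Emb\bigl[\prod_k h\bigl((N_k,X_k),(N_{k+1},X_{k+1})\bigr)\bigr]$ with i.i.d.\ copies already suffices for your martingale argument; (ii) to pass from ``local martingale null at $0$'' to vanishing expectation at $T$ you should note the uniform domination of each factor (bounded integrand, intensity bounded by $\rho(\Zmb)/\varepsilon$ under Condition \ref{cond:CLT}), which gives the class-(D) property of the product; this is routine but worth a line.
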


\begin{proof}
	The first equality in part (a) follows from the definition of $A_{\omega_0}$, the second uses the observation that $h(\omega_1,\omega_2)$ does not depend on $\xi_*(\omega_1)$ or $\xi_*(\omega_2)$, and the third follows from the definition of $\lambda$ and $h$.	
	Part (b) follows on noting that
	\begin{equation*}
		\text{Trace}(A^n_{\omega_0}) = \int_{\Omega_v^n} h(\omega_1,\omega_2) h(\omega_2,\omega_3) \dotsm h(\omega_n,\omega_1) \,\alpha(\omega_0,d\omega_1)\,\alpha(\omega_0,d\omega_2)\dotsm\alpha(\omega_0,d\omega_n) = 0;
	\end{equation*}
	see also \cite[Lemma 2.7]{ShigaTanaka1985}.
	Part (c) is immediate from \cite[Lemma 1.3]{ShigaTanaka1985}.
\end{proof}

Let
\begin{equation*}
	m(y,x[s],\xtil[s]) := \Emb_{P^n} \left[ \frac{\gamma_{s}^{12,y}\gamma_{s}^{13,y}}{\Gamma(y,X_1(s),\nu_1(s))} \,\Big|\, X_2[s]=x[s],X_3[s]=\xtil[s] \right]
\end{equation*}
and define functions $\ell, F \colon \Omega_v \times \Omega_v \to \Rmb$ ($\Xi \times \Xi$ a.s.) is by
\begin{align}
	\ell(\omega_1,\omega_2) & := \int_{[0,T]\times\Zmb} m(y,X_*(\omega_1)[s],X_*(\omega_2)[s]) \,ds\,\measurea(dy), \label{eq:ell} \\
	F(\omega_1,\omega_2) & := h(\omega_1,\omega_2) + h(\omega_2,\omega_1) - \ell(\omega_1,\omega_2). \label{eq:F}
\end{align}
Note that
\begin{equation}
	\label{eq:h_and_ell}
	\ell(\omega_1,\omega_2) = \int_{\Omega_v} h(\omega_3,\omega_1) h(\omega_3,\omega_2) \, \Xi(d\omega_3), \quad P_0 \text{ a.s.\ } \omega_0.
\end{equation}
Recall the independent normal random variable $Z$ from Lemma \ref{lem:Un_joint}.
Let
\begin{equation}
	\label{eq:J}
	J := \frac{1}{2} I_2^{X_1}(F) - \frac{1}{2} \text{Trace}(A_{X_1}A_{X_1}^*) + Z - \frac{1}{2} \sigma^2.
\end{equation}
The following lemma is the key step.

\begin{Lemma}
	\label{lem:joint_cvg}
	As $n \to \infty$,
	\begin{align*}
		\sqrt{-1} \etabar^n(\varphi) + J^n(T) \Rightarrow \sqrt{-1} I_1^{X_1}(\varphi) + J.
	\end{align*}
\end{Lemma}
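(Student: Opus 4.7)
The plan is to algebraically rearrange $\sqrt{-1}\etabar^n(\varphi) + J^n(T)$ into a combination of $U^n$ and the symmetric statistics $\hat{\Umc}^n_k$ appearing in Lemma \ref{lem:Un_joint}, together with deterministic constants and terms vanishing in probability, and then invoke that lemma with the continuous mapping theorem. Writing $\Phi_{X_1}(\omega) := \varphi(X_*(\omega), \xi_*(\omega)) - m_\varphi(X_1)$ and using the canonical process $V_j = (N_j, X_j, \xi_{1j})$, one has $\sqrt{n}\,\etabar^n(\varphi) = n^{-1/2}\sum_{j=1}^n \Phi_{X_1}(V_j)$; since $\varphi \in \Amc$ makes the $j = 1$ summand $O_{L^1}(1)$, it follows that $\sqrt{n}\,\etabar^n(\varphi) = n^{-1/2}\hat{\Umc}^n_1(\Phi_{X_1}) + o_p(1)$, with $\Phi_{X_1} \in \Amc^1$ by the definition of $m_\varphi$.

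For the second-order piece, Lemma \ref{lem:Jn3} disposes of $J^{n,3}$. Lemma \ref{lem:Jn1} identifies the first term on the right of \eqref{eq:Jn1} as $U^n$ after reorganizing the sum over $\hat{\Smc}^{n,2}$ into unordered pairs $2 \le i < j \le n$, while a direct inspection of the definition of $h$ shows that the integrand of the second term of \eqref{eq:Jn1} is precisely $h(V_i, V_j)$, so that term equals $n^{-1}\sum_{2 \le i < j \le n}(h(V_i, V_j) + h(V_j, V_i))$. By the conditional exchangeability of $\{V_k : k \ne 1\}$ given $X_1$ from Theorem \ref{thm:no_acceleration_1}(b) together with the definition of $m$, one identifies $\ell(V_j, V_k) = \int \Emb_{P^n}[\gamma_s^{1j,y}\gamma_s^{1k,y}/\Gamma(y, X_1(s), \nu_1(s)) \,|\, X_j, X_k]\,ds\,\measurea(dy)$, so Lemma \ref{lem:Jn2} writes the second term of $J^{n,2}$ as $2n^{-1}\sum_{2 \le j < k \le n}\ell(V_j, V_k) + o_p(1)$. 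Combining via the definition of $F$ in \eqref{eq:F} yields
\[
J^n(T) = U^n + n^{-1}\hat{\Umc}^n_2(F) - \tfrac{1}{2}C_1 + o_p(1),
\]
where $C_1 := \int_{[0,T]\times\Zmb} \Emb_{P^n}[\lan(\gamma_s^{12,y})^2, \nu_1(s)\ran/\Gamma(y, X_1(s), \nu_1(s))]\,ds\,\measurea(dy)$. The conditional-variance decomposition $\lan(\gamma_s^{12,y})^2, \nu_1(s)\ran = \Emb[(\gamma_s^{12,y} - \lan\gamma_s^{12,y},\theta_{12}\ran)^2 \,|\, X_1] + \Emb[\lan\gamma_s^{12,y}, \theta_{12}\ran^2 \,|\, X_1]$ (the cross term vanishing upon further conditioning on $X_2$), combined with the definition \eqref{eq:Zvar} of $\sigma^2$ and the identity $\int\lambda(t,y)\,dt\,\measurea(dy) = \mathrm{Trace}(A_{X_1}A_{X_1}^*)$ in Lemma \ref{lem:A}(a), identifies $C_1 = \sigma^2 + \mathrm{Trace}(A_{X_1}A_{X_1}^*)$.

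Applying Lemma \ref{lem:Un_joint} with $\phi_1 = \Phi_{X_1}$ and $\phi_2 = F$ then yields the joint convergence
\[
\bigl(U^n, n^{-1/2}\hat{\Umc}^n_1(\Phi_{X_1}), n^{-1}\hat{\Umc}^n_2(F)\bigr) \Rightarrow \bigl(Z, I_1^{X_1}(\Phi_{X_1}), \tfrac{1}{2} I_2^{X_1}(F)\bigr),
\]
with $Z$ independent of the MWIs, and the continuous mapping theorem combines everything into $\sqrt{-1}I_1^{X_1}(\Phi_{X_1}) + \tfrac{1}{2}I_2^{X_1}(F) - \tfrac{1}{2}\mathrm{Trace}(A_{X_1}A_{X_1}^*) + Z - \tfrac{1}{2}\sigma^2 = \sqrt{-1}I_1^{X_1}(\varphi) + J$, under the convention $I_1^{X_1}(\varphi) := I_1^{X_1}(\Phi_{X_1})$. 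The principal technical obstacle is verifying the hypothesis $F \in \Amc^2$ required by Lemma \ref{lem:Un_joint}: symmetry is immediate from that of $h(\omega_1,\omega_2) + h(\omega_2,\omega_1)$ and from \eqref{eq:h_and_ell}, but the centering $\int F(\omega_1, \omega_2)\,\alpha(\omega_0, d\omega_1) = 0$ requires showing that $\int h(\omega_1,\omega_2)\,\alpha(\omega_0, d\omega_1) = 0$ (because this is a stochastic integral against $\Ncompensated_*(\omega_1)$ whose integrand, once $(X_*(\omega_1), \xi_*(\omega_1))$ is fixed, is independent of $\Ncompensated_*(\omega_1)$) and that $\int h(\omega_2, \omega_1)\,\alpha(\omega_0, d\omega_1) = \int \ell(\omega_1, \omega_2)\,\alpha(\omega_0, d\omega_1)$, which is obtained by a Fubini-type interchange of the $\omega_1$-integration with the $\Ncompensated_*(\omega_2)$-integration in the definition of $h$ together with the representation \eqref{eq:h_and_ell} of $\ell$.
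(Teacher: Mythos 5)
Your argument is correct and is essentially the paper's own proof: the same decomposition of $J^n(T)$ via Lemmas \ref{lem:Jn3}, \ref{lem:Jn2} and \ref{lem:Jn1} into $U^n$, degenerate second-order symmetric statistics (the paper keeps $\hat{\Umc}^n_2(h^{sym})$ and $\hat{\Umc}^n_2(\ell)$ separate and only forms $F$ at the very end, rather than combining before passing to the limit), and a deterministic constant identified as $\sigma^2+\mathrm{Trace}(A_{X_1}A_{X_1}^*)$ by exactly your conditional-variance decomposition together with \eqref{eq:Zvar} and Lemma \ref{lem:A}, followed by the same joint application of Lemma \ref{lem:Un_joint} and the continuous mapping theorem. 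Two small corrections: the first-order identity should read $\etabar^n(\varphi)=n^{-1/2}\sum_{j}\Phi_{X_1}(V_j)$ (your displayed version carries a spurious factor $\sqrt{n}$), and in the centering check the two quantities you propose to equate, $\int h(\omega_2,\omega_1)\,\alpha(\omega_0,d\omega_1)$ and $\int \ell(\omega_1,\omega_2)\,\alpha(\omega_0,d\omega_1)$, in fact each vanish separately (the $X$-marginal of $\alpha(\omega_0,\cdot)$ is $\mu$ and $\nu(s,x[s])$ disintegrates as $\mu(d\xtil)\otimes\theta(s,x[s],\xtil[s])$, while for the first term the correct justification is the zero-mean property of the compensated integral with bounded predictable integrand, not independence of the integrand from $\Ncompensated_*$), so your needed identity holds but not via the cancellation mechanism you sketch.
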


\begin{proof}
	Recall $\hat{\Umc}_k^n$ in Lemma \ref{lem:Un_joint}, $\hat{\Smc}^{n,2}$ in \eqref{eq:Shat}, $U^n$ in \eqref{eq:Un}, and $\ell$ in \eqref{eq:ell}.
	From Lemmas \ref{lem:Jn2} and \ref{lem:Jn1} we have
	\begin{align*}
		J^{n,1} & = U^n + \hat{\Umc}_2^n(h^{sym}) + R^{n,1}, \\
		J^{n,2} & = \int_{[0,T]\times\Zmb} \Emb_{P^n} \left[ \frac{\lan (\gamma_{s}^{12,y})^2, \nu_1(s) \ran}{\Gamma(y,X_1(s),\nu_1(s))} \right] ds\,\measurea(dy) + \hat{\Umc}_2^n(\ell) + R^{n,2},
	\end{align*}
	where $h^{sym}(\omega_1,\omega_2) := \half \left[ h(\omega_1,\omega_2) + h(\omega_2,\omega_1) \right]$ for $\omega_1,\omega_2 \in \Omega_v$.
	It then follows from Lemmas \ref{lem:Jn3}, \ref{lem:Jn2}, \ref{lem:Jn1} and \ref{lem:Un_joint} that
	\begin{align*}
		& \left( \etabar^n(\varphi), J^{n,1}, J^{n,2}, J^{n,3} \right) \\
		& \Rightarrow \left( I_1^{X_1}(\varphi), Z + I_2^{X_1}(h^{sym}), \int_{[0,T]\times\Zmb} \Emb_{P^n} \left[ \frac{\lan (\gamma_{s}^{12,y})^2, \nu_1(s) \ran}{\Gamma(y,X_1(s),\nu_1(s))} \right] ds\,\measurea(dy) + I_2^{X_1}(\ell), 0 \right).
	\end{align*}
	Also note that
	\begin{align*}
		& \Emb_{P^n} \left[ \frac{\lan (\gamma_{s}^{12,y})^2, \nu_1(s) \ran}{\Gamma(y,X_1(s),\nu_1(s))} \right] \\
		& = \Emb_{P^n} \left[ \frac{\Emb_{P^n} [ \lan (\gamma_{s}^{12,y})^2, \nu_1(s) \ran \,|\, X_1]}{\Gamma(y,X_1(s),\nu_1(s))} \right] \\
		& = \Emb_{P^n} \left[ \frac{\Emb_{P^n} [(\gamma_{s}^{ij,y} - \lan \gamma_{s}^{ij,y}, \theta_{ij} \ran )^2 \,|\, X_1] + \Emb_{P^n} [\lan \gammabar_{t,y}(X_1[t],X_2(t),\cdot), \theta(t,X_1[t],X_2[t]) \ran^2 \,|\, X_1] }{\Gamma(y,X_1(t),\nu_1(t))} \right] \\
		& = \sigma^2 + \text{Trace}(A_{X_1}A_{X_1}^*)
	\end{align*}
	a.s., by \eqref{eq:Zvar} and Lemma \ref{lem:A}.
	The result follows on combining above two displays with \eqref{eq:Jn}, \eqref{eq:F}, and \eqref{eq:J}.	
\end{proof}

\subsection{Completing the proof of Theorem \ref{thm:CLT_nu}}

Recall $\Gmc^* = \Bmc(\Omega_0) \otimes \{\emptyset, \Omega^*\}$.
It follows from \eqref{eq:h_and_ell}, Lemma \ref{lem:A} and \cite[Lemma 1.2]{ShigaTanaka1985} that $P_0$ a.s.
\begin{equation*}
	\Emb_{P_0 \otimes \lambda^*} \left[ \exp \left( \frac{1}{2} I_2^{X_1}(F) \right) \Big| \Gmc^* \right] = \exp \left( \frac{1}{2} \text{Trace}(A_{X_1}A_{X_1}^*) \right).
\end{equation*}
Therefore
\begin{equation*}
	\Emb_{P_0 \otimes \lambda^*} \left[ \exp \left( \frac{1}{2} I_2^{X_1}(F) - \frac{1}{2} \text{Trace}(A_{X_1}A_{X_1}^*) \right) \right] = 1.
\end{equation*}
It then follows from Lemma \ref{lem:Un_joint} that
\begin{equation*}
	\Emb_{P_0 \otimes \lambda^*} \left[ \exp \left( J \right) \right] = 1.
\end{equation*}
Also, recall that
\begin{equation*}
	\Emb_{P^n} \left[ \exp \left( J^n(T) \right) \right] = 1.
\end{equation*}
It then follows from Lemma \ref{lem:joint_cvg} (with $\varphi \equiv 0$) and Scheffe's lemma {(see e.g.\ \cite[Theorem 16.14]{Billingsley1995probability} after an application of the Skorokhod representation theorem)} that $\{\exp \left( J^n(T) \right)\}$ is uniformly integrable.
Since $|\exp \left( \sqrt{-1} \etabar^n(\varphi) \right)| = 1$, 
$$\{\exp \left( \sqrt{-1} \etabar^n(\varphi) + J^n(T) \right)\}$$
is also uniformly integrable.
{Using this and Lemma \ref{lem:joint_cvg} we have}
\begin{align}
	& \lim_{n \to \infty} \Emb_{P^n} \exp \left( \sqrt{-1} \etabar^n(\varphi) + J^n(T) \right) \notag \\
	& = \Emb_{P_0 \otimes \lambda^*} \exp \left( \sqrt{-1} I_1^{X_1}(\varphi) + J \right) \notag \\
	& = \Emb_{P_0 \otimes \lambda^*} \exp \left( \sqrt{-1} I_1^{X_1}(\varphi) + \frac{1}{2} I_2^{X_1}(F) - \frac{1}{2} \text{Trace}(A_{X_1}A_{X_1}^*) \right) \Emb_{P_0 \otimes \lambda^*} \exp \left( Z - \frac{1}{2} \sigma^2 \right) \notag \\
	& = \Emb_{P_0 \otimes \lambda^*} \left( \Emb_{P_0 \otimes \lambda^*} \left[ \exp \left( \sqrt{-1} I_1^{X_1}(\varphi) + \frac{1}{2} I_2^{X_1}(F) - \frac{1}{2} \text{Trace}(A_{X_1}A_{X_1}^*) \right) \Big| \Gmc^* \right] \right) \notag \\
	& = \int_{\Omega_0} \exp \left( -\half (\sigma^\varphi_{\omega_0})^2 \right) P_0(d\omega_0), \label{eq:pf-CLT-nu}
\end{align}
where the last line is a consequence of Lemma \ref{lem:A} and \cite[Lemma 1.3]{ShigaTanaka1985}.
Thus we have proved \eqref{eq:goal_CLT_nu} which completes the proof of Theorem \ref{thm:CLT_nu}.

\subsection{Completing the proof of Theorem \ref{thm:CLT_mu}}

Clearly the function $\varphi \in \Amc_x$ can be viewed as an element (abusing notation) $\varphi$ of $\Amc$ defined by $\varphi(x,\xi) := \varphi(x)$, $x,\xi \in \Dmb([0,T]:\Zmb)^2$, and $\Phi_{\omega_0} = \Phi$ for $P_0$ a.s.\ $\omega_0$. 
Note that $h(\omega_1,\omega_2)$ depends on $\omega_2$ only through $X_*(\omega_2)$.
It then follows from the definition of $A_{\omega_0}$ and $A$ that 
$(I-A)^{-1} \Phi(\omega)=(I-A_{\omega_0})^{-1}\Phi_{\omega_0}(\omega)$ for $P_0$ a.s.\ $\omega_0$ and the dependence on $\omega$ is actually only through $X_*(\omega)$.
It then follows from the definition of $\sigma_{\omega_0}^{\varphi}$ that
\begin{equation*}
	\sigma_{\omega_0}^{\varphi} = \|(I-A)^{-1} \Phi\|_{\Hmc}, \quad P_0 \mbox{ a.s.\ } \omega_0.
\end{equation*}
Therefore from \eqref{eq:pf-CLT-nu} we have
\begin{align*}
	\lim_{n \to \infty} \Emb_{P^n} \exp \left( \sqrt{-1} \etabar_x^n(\varphi) + J^n(T) \right) & = \int_{\Omega_0} \exp \left( -\half \|(I-A)^{-1} \Phi\|_{\Hmc}^2 \right) P_0(d\omega_0) \\
	& = \exp \left( -\half \|(I-A)^{-1} \Phi\|_{\Hmc}^2 \right).
\end{align*}
This gives \eqref{eq:goal_CLT_mu} and completes the proof of Theorem \ref{thm:CLT_mu}.

\appendix

\section{Proof of Theorem \ref{thm:iid_LLN}}
\label{sec:pf-iid}

\begin{proof}[Proof of Theorem \ref{thm:iid_LLN}]
	(a) Since the limiting system is McKean--Vlasov, the proof of existence and uniqueness is standard (cf.\ \cite[Chapter 1]{Sznitman1991}, see also \cite[Theorem 2.1]{Graham1992Mckean}) and hence omitted.

	(b) Now we show \eqref{eq:iid_LLN1}.
	For each fixed $i \in [n]$ and $t \in [0,T]$, we have
	\begin{align*}
		& \Emb \|X_i^n-X_i\|_{*,t} \\
		& \le \Emb \int_{\Xmb_t} |y|\left| \one_{[0,\Gamma(y,X_i^n(s-),\nu_i^n(s-))]}(z) - \one_{[0,\Gamma(y,X_i(s-),\nu(s-))]}(z) \right| N_i(ds\,dy\,dz) \\
		& = \Emb \int_{[0,t]\times\Zmb} |y| \left| \Gamma(y,X_i^n(s),\nu_i^n(s))-\Gamma(y,X_i(s),\nu(s)) \right| ds\,\measurea(dy). 
	\end{align*}
	Fixing $y \in \Zmb$ and $s \in [0,T]$, we have
	\begin{align*}
		& \Emb \left| \Gamma(y,X_i^n(s),\nu_i^n(s))-\Gamma(y,X_i(s),\nu(s)) \right| \\
		& = \Emb \left| \frac{1}{n} \sum_{j=1}^n \gamma(y,X_i^n(s),X_j^n(s),\xi_{ij}(s)) - \int_{\Zmb^2} \gamma(y,X_i(s),\xtil,\xitil) \,\nu(s)(d\xtil\,d\xitil) \right| \\
		& \le \Emb \left| \frac{1}{n} \sum_{j=1}^n \gamma(y,X_i^n(s),X_j^n(s),\xi_{ij}(s)) - \frac{1}{n} \sum_{j=1}^n \gamma(y,X_i(s),X_j(s),\xi_{ij}(s)) \right| \\
		& \qquad + \Emb \left| \frac{1}{n} \sum_{j=1}^n \gamma(y,X_i(s),X_j(s),\xi_{ij}(s)) - \int_{\Zmb^2} \gamma(y,X_i(s),\xtil,\xitil) \,\nu(s)(d\xtil\,d\xitil) \right|.
	\end{align*}
	From Condition \ref{cond:iid}, Remark \ref{rmk:Lipschitz}(b) and the exchangeability of $\{(X_j^n,X_j) : j \in [n]\}$ we have 
	\begin{align*}
		& \Emb \left| \frac{1}{n} \sum_{j=1}^n \gamma(y,X_i^n(s),X_j^n(s),\xi_{ij}(s)) - \frac{1}{n} \sum_{j=1}^n \gamma(y,X_i(s),X_j(s),\xi_{ij}(s)) \right| \\
		& \le \frac{1}{n} \sum_{j=1}^n \gamma_y \left( \Emb |X_i^n(s)-X_i(s)| + \Emb |X_j^n(s)-X_j(s)| \right) = 2\gamma_y \Emb |X_i^n(s)-X_i(s)|.
	\end{align*}
	Since $\{(X_j,\xi_{ij}) : j\in[n]\}$ are independent with common joint law $\nu$, using Lemma \ref{lem:iid-moment} we have
	\begin{align*}
		& \Emb \left| \frac{1}{n} \sum_{j=1}^n \gamma(y,X_i(s),X_j(s),\xi_{ij}(s)) - \int_{\Zmb^2} \gamma(y,X_i(s),\xtil,\xitil) \,\nu(s)(d\xtil\,d\xitil) \right| 
		\le \frac{\kappa_1\gamma_y}{\sqrt{n}}.
	\end{align*}
	Combining above four displays gives
	\begin{align*}
		\Emb \|X_i^n-X_i\|_{*,t} 
		& \le \int_{[0,t]\times\Zmb} |y| \left( 2\gamma_y \Emb |X_i^n(s)-X_i(s)| + \frac{\kappa_1\gamma_y}{\sqrt{n}} \right) ds\,\measurea(dy) \\
		& \le 2C_\gamma \int_0^t \Emb \|X_i^n-X_i\|_{*,s} \,ds + \frac{\kappa_1C_\gamma}{\sqrt{n}}.
	\end{align*}
	It then follows from Gronwall's inequality that
	\begin{equation*}
		\Emb \|X_i^n-X_i\|_{*,T} \le \frac{\kappa_2}{\sqrt{n}}
	\end{equation*}
	for some $\kappa_2 < \infty$.
	This gives \eqref{eq:iid_LLN1}.
	
	(c) Using (b), the independence of $\{X_i\}$ and a standard argument (see \cite[Chapter 1]{Sznitman1991}) one has \eqref{eq:iid_POC}.
	
	(d) Using (b) and a standard argument (see \cite[Chapter 1]{Sznitman1991} and \cite[Appendix A]{BhamidiBudhirajaWu2019weakly}) one has \eqref{eq:iid_LLN_nu_i} and \eqref{eq:iid_LLN_nu_i_t}. The last two statements \eqref{eq:iid_LLN_mu_i} and \eqref{eq:iid_LLN_mu_i_t} follow immediately from \eqref{eq:iid_LLN_nu_i} and \eqref{eq:iid_LLN_nu_i_t}, respectively.
\end{proof}

{\textbf{Acknowledgment:} We thank an anonymous referee for suggesting the alternative and more direct argument described in Remark \ref{rmk:alternative-proof}.}

\bibliographystyle{plain}

\begin{bibdiv}
\begin{biblist}

\bib{Aldous1985exchangeability}{inproceedings}{
      author={Aldous, David~J.},
       title={Exchangeability and related topics},
        date={1985},
   booktitle={{\'E}cole d'{\'e}t{\'e} de probabilit{\'e}s de saint-flour xiii
  --- 1983},
      editor={Hennequin, P.~L.},
   publisher={Springer Berlin Heidelberg},
     address={Berlin, Heidelberg},
       pages={1\ndash 198},
}

\bib{Anderson1991Strong}{incollection}{
      author={Anderson, William~J.},
       title={Strong and exponential ergodicity},
        date={1991},
   booktitle={Continuous-time markov chains: An applications-oriented
  approach},
   publisher={Springer New York},
     address={New York, NY},
       pages={204\ndash 232},
         url={https://doi.org/10.1007/978-1-4612-3038-0_6},
}

\bib{BaladronFaugeras2012}{article}{
      author={Baladron, J.},
      author={Fasoli, D.},
      author={Faugeras, O.},
      author={Touboul, J.},
       title={{Mean-field description and propagation of chaos in networks of
  Hodgkin--Huxley and FitzHugh--Nagumo neurons}},
        date={2012},
     journal={The Journal of Mathematical Neuroscience},
      volume={2},
      number={1},
       pages={10},
}

\bib{BarreDegondPeurichardZatorska2020modelling}{article}{
      author={Barr{\'e}, Julien},
      author={Degond, Pierre},
      author={Peurichard, Diane},
      author={Zatorska, Ewelina},
       title={Modelling pattern formation through differential repulsion},
        date={2020},
     journal={Networks \& Heterogeneous Media},
      volume={15},
      number={3},
       pages={307\ndash 352},
}

\bib{BarreDegondZatorska2017kinetic}{article}{
      author={Barr{\'e}, Julien},
      author={Degond, Pierre},
      author={Zatorska, Ewelina},
       title={Kinetic theory of particle interactions mediated by dynamical
  networks},
        date={2017},
     journal={Multiscale Modeling \& Simulation},
      volume={15},
      number={3},
       pages={1294\ndash 1323},
}

\bib{BarreDobsonOttobreZatorska2021fast}{article}{
      author={Barr{\'e}, Julien},
      author={Dobson, Paul},
      author={Ottobre, Michela},
      author={Zatorska, Ewelina},
       title={Fast non-mean-field networks: Uniform in time averaging},
        date={2021},
     journal={SIAM Journal on Mathematical Analysis},
      volume={53},
      number={1},
       pages={937\ndash 972},
}

\bib{BetzelBassett2017multi}{article}{
      author={Betzel, Richard~F},
      author={Bassett, Danielle~S},
       title={Multi-scale brain networks},
        date={2017},
     journal={Neuroimage},
      volume={160},
       pages={73\ndash 83},
}

\bib{BhamidiBudhirajaWu2019weakly}{article}{
      author={Bhamidi, Shankar},
      author={Budhiraja, Amarjit},
      author={Wu, Ruoyu},
       title={Weakly interacting particle systems on inhomogeneous random
  graphs},
        date={2019},
     journal={Stochastic Processes and their Applications},
      volume={129},
      number={6},
       pages={2174\ndash 2206},
}

\bib{Billingsley1995probability}{book}{
      author={Billingsley, P.},
       title={{Probability and Measure}},
      series={Wiley series in probability and mathematical statistics:
  Probability and mathematical statistics},
   publisher={John Wiley \& Sons, New York},
        date={1995},
}

\bib{Bremaud1981point}{book}{
      author={Br{\'e}maud, Pierre},
       title={Point processes and queues: {M}artingale dynamics},
   publisher={Springer},
        date={1981},
      volume={50},
}

\bib{BudhirajaDupuisFischerRamanan2015limits}{article}{
      author={Budhiraja, A.},
      author={Dupuis, P.},
      author={Fischer, M.},
      author={Ramanan, K.},
       title={{Limits of relative entropies associated with weakly interacting
  particle systems}},
        date={2015},
     journal={Electronic Journal of Probability},
      volume={20},
      number={80},
       pages={1\ndash 22},
}

\bib{BudhirajaDupuisGanguly2018large}{article}{
      author={Budhiraja, Amarjit},
      author={Dupuis, Paul},
      author={Ganguly, Arnab},
       title={Large deviations for small noise diffusions in a fast markovian
  environment},
        date={2018},
     journal={Electron. J. Probab.},
      volume={23},
       pages={33 pp.},
         url={https://doi.org/10.1214/18-EJP228},
}

\bib{BudhirajaMukherjeeWu2019supermarket}{article}{
      author={Budhiraja, Amarjit},
      author={Mukherjee, Debankur},
      author={Wu, Ruoyu},
       title={Supermarket model on graphs},
        date={201906},
     journal={Ann. Appl. Probab.},
      volume={29},
      number={3},
       pages={1740\ndash 1777},
         url={https://doi.org/10.1214/18-AAP1437},
}

\bib{CainesHuang2018graphon}{inproceedings}{
      author={Caines, Peter~E},
      author={Huang, Minyi},
       title={Graphon mean field games and the {GMFG} equations},
organization={IEEE},
        date={2018},
   booktitle={{2018 IEEE Conference on Decision and Control (CDC)}},
       pages={4129\ndash 4134},
}

\bib{CarmonaCooneyGravesLauriere2019stochastic}{article}{
      author={Carmona, Rene},
      author={Cooney, Daniel},
      author={Graves, Christy},
      author={Lauriere, Mathieu},
       title={{Stochastic graphon games: I. The static case}},
        date={2019},
     journal={arXiv preprint arXiv:1911.10664},
}

\bib{CoppiniDietertGiacomin2019law}{article}{
      author={Coppini, Fabio},
      author={Dietert, Helge},
      author={Giacomin, Giambattista},
       title={A law of large numbers and large deviations for interacting
  diffusions on {E}rdős–{R}ényi graphs},
        date={2019},
     journal={Stochastics and Dynamics},
      volume={0},
      number={0},
       pages={2050010},
      eprint={https://doi.org/10.1142/S0219493720500100},
         url={https://doi.org/10.1142/S0219493720500100},
}

\bib{Delarue2017mean}{article}{
      author={Delarue, Fran{\c{c}}ois},
       title={Mean field games: {A} toy model on an {E}rd{\"o}s-{R}enyi
  graph.},
        date={2017},
     journal={ESAIM: Proceedings and Surveys},
      volume={60},
       pages={1\ndash 26},
}

\bib{Delattre2016}{article}{
      author={Delattre, Sylvain},
      author={Giacomin, Giambattista},
      author={Lu{\c{c}}on, Eric},
       title={A note on dynamical models on random graphs and
  {F}okker--{P}lanck equations},
        date={2016},
        ISSN={1572-9613},
     journal={Journal of Statistical Physics},
      volume={165},
      number={4},
       pages={785\ndash 798},
         url={https://doi.org/10.1007/s10955-016-1652-3},
}

\bib{Dynkin1983}{article}{
      author={Dynkin, E.~B.},
      author={Mandelbaum, A.},
       title={{Symmetric statistics, Poisson point processes, and multiple
  Wiener integrals}},
        date={1983},
     journal={The Annals of Statistics},
      volume={11},
      number={3},
       pages={739\ndash 745},
}

\bib{Graham1992Mckean}{article}{
      author={Graham, Carl},
       title={Mckean-{V}lasov {I}to-{S}korohod equations, and nonlinear
  diffusions with discrete jump sets},
        date={1992},
        ISSN={0304-4149},
     journal={Stochastic Processes and their Applications},
      volume={40},
      number={1},
       pages={69 \ndash  82},
  url={http://www.sciencedirect.com/science/article/pii/030441499290138G},
}

\bib{GrossDLimaBlasius2006epidemic}{article}{
      author={Gross, Thilo},
      author={D’Lima, Carlos J~Dommar},
      author={Blasius, Bernd},
       title={Epidemic dynamics on an adaptive network},
        date={2006},
     journal={Physical review letters},
      volume={96},
      number={20},
       pages={208701},
}

\bib{IkedaWatanabe1990SDE}{book}{
      author={Ikeda, N.},
      author={Watanabe, S.},
       title={{Stochastic Differential Equations and Diffusion Processes}},
      series={North-Holland Mathematical Library},
   publisher={Elsevier},
        date={1981},
      volume={24},
}

\bib{JacodShiryaev2013limit}{book}{
      author={Jacod, Jean},
      author={Shiryaev, Albert},
       title={Limit theorems for stochastic processes},
   publisher={Springer Science \& Business Media},
        date={2013},
      volume={288},
}

\bib{KhambhatiSizemoreBetzelBassett2018modeling}{article}{
      author={Khambhati, Ankit~N},
      author={Sizemore, Ann~E},
      author={Betzel, Richard~F},
      author={Bassett, Danielle~S},
       title={Modeling and interpreting mesoscale network dynamics},
        date={2018},
     journal={NeuroImage},
      volume={180},
       pages={337\ndash 349},
}

\bib{Kolokoltsov2010}{book}{
      author={Kolokoltsov, V.~N.},
       title={{Nonlinear Markov Processes and Kinetic Equations}},
      series={Cambridge Tracts in Mathematics},
   publisher={Cambridge University Press},
        date={2010},
      volume={182},
}

\bib{KotelenezKurtz2008}{article}{
      author={Kotelenez, Peter~M.},
      author={Kurtz, Thomas~G.},
       title={{Macroscopic limits for stochastic partial differential equations
  of McKean--Vlasov type}},
        date={2008Dec},
        ISSN={1432-2064},
     journal={Probability Theory and Related Fields},
      volume={146},
      number={1},
       pages={189\ndash 222},
         url={https://doi.org/10.1007/s00440-008-0188-0},
}

\bib{Kurtz1981approximation}{book}{
      author={Kurtz, T.~G.},
       title={{Approximation of Population Processes}},
      series={CBMS-NSF Regional Conference Series in Applied Mathematics},
   publisher={SIAM},
        date={1981},
      volume={36},
}

\bib{KurtzXiong1999}{article}{
      author={Kurtz, T.~G.},
      author={Xiong, J.},
       title={{Particle representations for a class of nonlinear SPDEs}},
        date={1999},
     journal={Stochastic Processes and their Applications},
      volume={83},
      number={1},
       pages={103\ndash 126},
}

\bib{Kurtz2014weak}{article}{
      author={Kurtz, Thomas},
       title={{Weak and strong solutions of general stochastic models}},
        date={2014},
     journal={Electronic Communications in Probability},
      volume={19},
      number={none},
       pages={1 \ndash  16},
}

\bib{KurtzProtter1996weak2}{incollection}{
      author={Kurtz, Thomas~G.},
      author={Protter, Philip~E.},
       title={{Weak convergence of stochastic integrals and differential
  equations II: Infinite dimensional case}},
        date={1996},
   booktitle={{Probabilistic Models for Nonlinear Partial Differential
  Equations}},
      editor={Talay, Denis},
      editor={Tubaro, Luciano},
   publisher={Springer Berlin Heidelberg},
     address={Berlin, Heidelberg},
       pages={197\ndash 285},
         url={https://doi.org/10.1007/BFb0093181},
}

\bib{LavaeiMurray2009quantized}{inproceedings}{
      author={Lavaei, Javad},
      author={Murray, Richard~M.},
       title={{On quantized consensus by means of gossip algorithm - Part II:
  Convergence time}},
        date={2009},
   booktitle={2009 american control conference},
       pages={2958\ndash 2965},
}

\bib{MarceauNoelHebertAllardDube2010adaptive}{article}{
      author={Marceau, Vincent},
      author={No{\"e}l, Pierre-Andr{\'e}},
      author={H{\'e}bert-Dufresne, Laurent},
      author={Allard, Antoine},
      author={Dub{\'e}, Louis~J},
       title={Adaptive networks: Coevolution of disease and topology},
        date={2010},
     journal={Physical Review E},
      volume={82},
      number={3},
       pages={036116},
}

\bib{Meleard1996asymptotic}{incollection}{
      author={M{\'e}l{\'e}ard, Sylvie},
       title={Asymptotic behaviour of some interacting particle systems;
  {M}ckean-{V}lasov and {B}oltzmann models},
        date={1996},
   booktitle={{Probabilistic Models for Nonlinear Partial Differential
  Equations}},
      editor={Talay, Denis},
      editor={Tubaro, Luciano},
   publisher={Springer Berlin Heidelberg},
     address={Berlin, Heidelberg},
       pages={42\ndash 95},
         url={https://doi.org/10.1007/BFb0093177},
}

\bib{NguyenYinHoang2019laws}{article}{
      author={Nguyen, Son~L},
      author={Yin, George},
      author={Hoang, Tuan~A},
       title={On laws of large numbers for systems with mean-field interactions
  and {M}arkovian switching},
        date={2019},
     journal={Stochastic Processes and their Applications},
  url={http://www.sciencedirect.com/science/article/pii/S0304414918301078},
}

\bib{OliveiraReis2019interacting}{article}{
      author={Oliveira, Roberto~I.},
      author={Reis, Guilherme~H.},
       title={Interacting diffusions on random graphs with diverging average
  degrees: Hydrodynamics and large deviations},
        date={2019Jul},
        ISSN={1572-9613},
     journal={Journal of Statistical Physics},
         url={https://doi.org/10.1007/s10955-019-02332-1},
}

\bib{PariseOzdaglar2019graphon}{article}{
      author={Parise, Francesca},
      author={Ozdaglar, Asuman~E},
       title={Graphon games: A statistical framework for network games and
  interventions},
        date={2019},
     journal={Available at SSRN: https://ssrn.com/abstract=3437293},
}

\bib{RochaMasuda2016individual}{article}{
      author={Rocha, Luis~EC},
      author={Masuda, Naoki},
       title={Individual-based approach to epidemic processes on arbitrary
  dynamic contact networks},
        date={2016},
     journal={Scientific reports},
      volume={6},
      number={1},
       pages={1\ndash 10},
}

\bib{shah2009gossip}{article}{
      author={Shah, Devavrat},
       title={Gossip algorithms},
        date={2009},
     journal={Foundations and Trends in Networking},
      volume={3},
      number={1},
       pages={1\ndash 125},
}

\bib{ShigaTanaka1985}{article}{
      author={Shiga, T.},
      author={Tanaka, H.},
       title={{Central limit theorem for a system of Markovian particles with
  mean field interactions}},
        date={1985},
     journal={Probability Theory and Related Fields},
      volume={69},
      number={3},
       pages={439\ndash 459},
}

\bib{Sznitman1984}{article}{
      author={Sznitman, A-S.},
       title={{Nonlinear reflecting diffusion process, and the propagation of
  chaos and fluctuations associated}},
        date={1984},
     journal={Journal of Functional Analysis},
      volume={56},
      number={3},
       pages={311\ndash 336},
}

\bib{Sznitman1991}{incollection}{
      author={Sznitman, A-S.},
       title={{Topics in propagation of chaos}},
        date={1991},
   booktitle={Ecole d'{E}t{\'e} de {P}robabilit{\'e}s de {S}aint-{F}lour
  {XIX}---1989},
      editor={Hennequin, Paul-Louis},
      series={Lecture Notes in Mathematics},
      volume={1464},
   publisher={Springer Berlin Heidelberg},
     address={Berlin, Heidelberg},
       pages={165\ndash 251},
}

\bib{Touboul2014propagation}{article}{
      author={Touboul, Jonathan},
       title={Propagation of chaos in neural fields},
        date={2014},
     journal={The Annals of Applied Probability},
      volume={24},
      number={3},
       pages={1298\ndash 1328},
}

\bib{Tweedie1981Criteria}{article}{
      author={Tweedie, R.~L.},
       title={Criteria for ergodicity, exponential ergodicity and strong
  ergodicity of markov processes},
        date={1981},
        ISSN={00219002},
     journal={Journal of Applied Probability},
      volume={18},
      number={1},
       pages={122\ndash 130},
         url={http://www.jstor.org/stable/3213172},
}

\bib{YinZhu2010hybrid}{book}{
      author={Yin, G~George},
      author={Zhu, Chao},
       title={{Hybrid Switching Diffusions: Properties and Applications}},
      series={Stochastic Modelling and Applied Probability},
   publisher={Springer-Verlag New York},
        date={2010},
      volume={63},
}

\end{biblist}
\end{bibdiv}

\end{document}